\documentclass[11pt]{article}
\usepackage{IEEEtrantools}
\usepackage{nccmath}
\usepackage{mathtools}
\usepackage{physics}
\usepackage{latexsym}
\usepackage{amssymb}
\usepackage{amsmath,amsthm}
\usepackage{mathrsfs}
\usepackage{enumerate}
\usepackage{dsfont}
\usepackage{hyperref}
\usepackage{graphicx}
\usepackage{epstopdf}
\usepackage{tikz}
\usepackage[T1]{fontenc}

\usetikzlibrary{shapes,arrows.meta,positioning}
\usepackage{cleveref}
\usepackage{tikz-cd}
\usepackage{psfrag}

\usepackage{multirow}
\usetikzlibrary{arrows,decorations.markings}
\usepackage{bookmark}
\hypersetup{
     colorlinks   = true,
     citecolor    = blue
}

\newtheorem{dfn}{Definition}[section]
\newtheorem{thm}[dfn]{Theorem}
\newtheorem{lmma}[dfn]{Lemma}
\newtheorem{ppsn}[dfn]{Proposition}
\newtheorem{crlre}[dfn]{Corollary}

\newtheorem{rmrk}[dfn]{Remark}
\newtheorem{notation}[dfn]{Notation}

\newcommand{\la}{{\lambda}}
\newcommand{\bbc}{\mathbb{C}}
\newcommand{\bbz}{\mathbb{Z}}

\newcommand{\sk}{\mathcal{K}}

\hypersetup{linkcolor=blue}

\providecommand{\Tref}[1]{Theorem~\ref{#1}}
\providecommand{\Lref}[1]{Lemma~\ref{#1}}
\providecommand{\Pref}[1]{Proposition~\ref{#1}}
\providecommand{\Yref}[1]{Corollary~\ref{#1}}

\providecommand{\Eref}[1]{(\ref{#1})}

\let\oldthebibliography\thebibliography
\renewcommand{\thebibliography}[1]{
  \oldthebibliography{#1}
  \setlength{\itemsep}{0pt}
  \setlength{\parsep}{0pt}
  \setlength{\parskip}{0pt}
}

\begin{document}

\tikzset{->-/.style={decoration={
  markings,
  mark=at position #1 with {\arrow{>}}},postaction={decorate}}}
  \tikzset{-<-/.style={decoration={
  markings,
  mark=at position #1 with {\arrow{<}}},postaction={decorate}}}

\author{{\scshape Vibhor Bhatt,\,\,Satyajit Guin,\,\,Palash Sharma}}
\title{Gromov–Hausdorff Convergence of Spectral Truncations for Noncommutative Tori}

\date{}
\maketitle

\begingroup
\renewcommand{\thefootnote}{}
\begin{NoHyper}
\footnotetext{%
\textbf{2020 Mathematics Subject Classification.} 58B34, 46L87, 46L07. 

\textbf{Keywords.} Noncommutative geometry, spectral truncations, noncommutative torus, Gromov--Hausdorff convergence.}
\end{NoHyper}
\endgroup

\begin{abstract}
We prove the Gromov--Hausdorff convergence of the state spaces, equipped with Connes' distance function, of the operator systems associated with spectral truncations for noncommutative $n$-tori, $n\ge 2$. The action of the $n$-torus on the noncommutative $n$-torus, together with an appropriate use of Bochner integrals, enables us to construct an $C^1$-approximate order isomorphism that establishes the convergence. The earlier convergence result for the classical case of tori \cite{LvS} follows as a special case of our result. We conclude by studying the $C^*$-envelopes and propagation numbers of the truncated operator systems and show that the structural properties of the truncated operator systems for tori persist under deformation.
\end{abstract}

%%%%%%%%%%%%%%%%%%%%%%%%%%%%%%%%%%%%%%%%%%%%
%%%%%%%%%%%%%%%%%%%%%%%%%%%%%%%%%%%%%%%%%%%%

\section{Introduction}

Connes and van Suijlekom \cite{CS1} introduced the notion of \emph{spectral truncations} in noncommutative geometry, motivated by the observation that, in many physical situations, only a finite portion of the spectrum is accessible. One may therefore seek to understand the underlying system by retaining only the spectral data below a prescribed cutoff and studying the resulting finite-dimensional approximation. The authors in \cite{CS2} have pointed out that our understanding of the geometry of spacetime is constrained by the resolution of our measuring devices; consequently, we can determine the underlying metric space only up to a finite resolution. This leads to the guiding question of whether, by considering suitable limiting procedures as the truncation is progressively relaxed, one can recover geometric information from these truncated models.

We begin with a spectral triple $(A,H,D)$ for a unital $C^*$ algebra $\mathscr{A}$ and obtain the truncated systems $\{(P_{\Lambda}AP_{\Lambda},P_{\Lambda}HP_{\Lambda},P_{\Lambda}DP_{\Lambda})\}_{\Lambda\ge 0}$ by the spectral projections $P_{\Lambda}$ of $D$ onto the eigenspaces with eigenvalues of modulus $\le\Lambda$. These truncated systems are finite-dimensional operator-system spectral triples and are called spectral truncations of $(A,H,D)$. Suppose that Connes' distance function \cite{C1,C2}, given by
\[
d_A(\phi,\psi):=\sup_{a\in A}\{|\phi(a)-\psi(a)|:\norm{[D,a]}\le 1\},
\]
induces the weak $\ast$-topology on the state space $\mathcal{S}(\mathscr{A})$; equivalently, the given spectral triple induces a compact quantum metric space in the sense of Rieffel \cite{Rie1,Rie2}. If $\,\ker([P_{\Lambda}DP_{\Lambda},\,.\,])$ consists only of scalars for every $\Lambda\ge 0$, then the finite-dimensionality of the truncated operator systems ensures that the corresponding Connes' distance function, with $A$ replaced by $P_\Lambda AP_\Lambda$ and $D$ replaced by $P_\Lambda DP_\Lambda$, induces the weak $\ast$-topology on the state spaces $\mathcal{S}(P_{\Lambda}\mathscr{A}P_{\Lambda})$ for every $\Lambda\ge 0$. With this formulation, the spectral truncation problem asks whether the family $\{\mathcal{S}(P_{\Lambda}\mathscr{A}P_{\Lambda})\}_{\Lambda}$ converges to $\mathcal{S}(\mathscr{A})$ in the Gromov--Hausdorff distance as $\Lambda\to\infty$. As mentioned in \cite{Rie4}, this problem is extremely interesting and important.

Although we have mentioned the spectral truncation problem in the Gromov--Hausdorff framework, another possible approach is to consider the \emph{quantum} Gromov–Hausdorff distance \cite{Rie3,AKK}, as pointed out in \cite{LvS}. We note that these two notions of convergence are not equivalent \cite{KK}. However, in the case we consider in this article, Gromov--Hausdorff convergence still implies quantum Gromov--Hausdorff convergence, as follows from Proposition $2.2.4$ in \cite{KK1}. We note the related notion of propinquity \cite{L1,L2}, primarily formulated for $C^*$-algebras. Examples of convergence of spectral truncations include the classical case of the circle, or more generally, $n$-tori \cite{vS,LvS}. See also \cite{T} for a related work. A variant of the spectral truncation problem, namely the convergence of Peter--Weyl truncations, has been established for compact metric groups \cite{GS} and coamenable compact quantum groups \cite{L}. See also \cite{Rie4} for a related treatment of Fourier truncations.

Arguably, one of the most fundamental and widely studied example of noncommutative geometry is the noncommutative $n$-torus ($n\ge 2$) $\mathcal{A}_{\Theta}$ \cite{Rie0}, where $\Theta$ is a real skew-symmetric $n\times n$ matrix, with the canonical Dirac operator coming from the ergodic action of $\mathbb{T}^n$. It is known from the early work of Rieffel \cite{Rie1} that the canonical spectral triple $\big(\mathcal{A}_{\Theta},L^2(\mathcal{A}_{\Theta},\tau)\otimes\bbc^{2^{\lfloor n/2\rfloor}},D\big)$ induces a quantum metric structure on $\mathcal{A}_{\Theta}$ (see \cite{Li} for more general case of $\theta$-deformations). The spectral truncation problem makes sense in this case, and we establish the Gromov--Hausdoff convergence of the state spaces, when equipped with Connes' distance function, of the operator systems associated to spectral truncations for $\mathcal{A}_{\Theta}$. The convergence also holds with respect to the quantum Gromov--Hausdorff distance as well.

A crucial role in our analysis is played by the action of the torus $\mathbb{T}^n$ on $\mathcal{A}_{\Theta}$ and an appropriate use of Bochner integrals. This enables us to construct an $C^1$-\emph{approximate order isomorphism} introduced in \cite{vS}, which allows us to establish the convergence. A pleasant consequence of our work is that the analysis remains valid in the case of $\Theta=0$, which is the classical situation of the $n$-torus $C^\infty(\mathbb{T}^n)$. Thus, the harmonic analysis machinery employed in \cite{LvS} can be avoided, as it is specific to the commutative setting anyway. It is worth noting, however, that the foundational work \cite{LvS} was the first to move beyond dimension one; our contribution extends this perspective beyond the classical setting to the noncommutative $n$-torus, one of the fundamental examples in noncommutative geometry.

We conclude the article with a further analysis of the structure of the truncated operator systems. We compute the $C^*$-envelope \cite{A1,H} and the propagation number \cite{CS1} of each truncated operator system and show that these agree, up to isomorphism of the $C^*$-envelopes, with the corresponding structures obtained in \cite{LvS} for the classical case of tori. In other words, the structural properties of the truncated operator systems for tori persist under deformation. The structure of duals for the truncated operator systems remains largely unexplored, despite our efforts, much as in the classical case of tori.
\smallskip

The article is organized as follows. In Section $2$, we recall the essential facts on noncommutatve tori, compact quantum metric spaces, and vector measures and Bochner integrals. Section $3$ discusses the candidate for $C^1$-approximate order isomorphism. Section $4$ is the main section, where we prove the convergence. Finally, in Section $5$ we study $C^*$-envelopes and propagation numbers of the truncated operator systems.

\section{Preliminaries}

In this section, we briefly recall the necessary terminology and results that we need in this paper. Appropriate references are provided for the readers interested in delve deeper.

\subsection{Noncommutative tori and Dirac operators}

We recall non-commutative $n$-torus from \cite{Rie0}. Let $\Theta$ be an $n\times n$ real skew-symmetric matrix. The noncommutative $n$-torus $\mathbb{T}_{\Theta}$ is the universal $C^*$-algebra generated by $n$ unitaries $U_1,\ldots,U_n$ satisfying $U_kU_\ell=e^{2\pi i\Theta_{k\ell}}U_\ell U_k$, where $k,\ell\in\{1,\ldots,n\}$. Throughout this paper $i$ will stand for $\sqrt{-1}$. On $\mathbb{T}_{\Theta}$, the compact connected Lie group $\mathbb{T}^n$ acts as follows:
$$(z_1,\ldots,z_n)\,.\,U_j=z_jU_j\,;\quad j=1,\ldots,n.$$
The smooth dense subalgebra of $\mathbb{T}_{\Theta}$ under the above action is
$$\mathcal{A}_{\Theta}:=\big\{\textstyle\sum_{r\in\bbz^n}\,a_rU^r:(a_r)\in\mathbb{S}(\bbz^n),\,r=(r_1,\ldots,r_n)\in\bbz^n\big\},$$
called the smooth noncommutative $n$-torus, where $U^r:=U_1^{r_1}\ldots U_n^{r_n}$, and $\mathbb{S}(\bbz^n)$ denotes vector space of multisequences $(a_r)$ of rapid decay. When $\Theta=0$, we have $\mathbb{T}_0=C(\mathbb{T}^n)$ and $\mathcal{A}_0=C^\infty(\mathbb{T}^n)$.

For $k=(k_1,\ldots,k_n)\in\mathbb Z^n$, we denote $e_k:=U_1^{k_1}\ldots U_n^{k_n}$. We have the following facts:
\begin{lmma}\label{cocycle}
We have the following:
$$e_ke_\ell=\omega(k,\ell)e_{k+\ell}\,,\qquad \omega(k,\ell):=e^{-2\pi i\sum_{1\le r<j\le n}\Theta_{rj} k_j\ell_r}\in \mathbb{S}^1;$$
$$e_k^*=\overline{\omega(k,-k)}\,e_{-k}\,,\qquad e_\lambda e_\nu^*=\overline{\omega(\lambda-\nu,\nu)}\,e_{\lambda-\nu}.$$
Moreover, $$\omega(k,\ell+\ell')=\omega(k,\ell)\omega(k,\ell')\,,\qquad \omega(k+k',\ell)=\omega(k,\ell)\omega(k',\ell)\,,\qquad \omega(k,0)=\omega(0,k)=1,$$
and we have the cocycle identity $\,\omega(k,\ell)\omega(k+\ell,r)=\omega(\ell,r)\omega(k,\ell+r)$.
\end{lmma}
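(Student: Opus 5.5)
The plan is a direct computation from the defining relations $U_kU_\ell=e^{2\pi i\Theta_{k\ell}}U_\ell U_k$, with the product formula proved first and everything else deduced from it. To compute $e_ke_\ell=U_1^{k_1}\cdots U_n^{k_n}U_1^{\ell_1}\cdots U_n^{\ell_n}$, I will move each block $U_r^{\ell_r}$ leftward past the blocks $U_j^{k_j}$ with $j>r$ until it sits next to $U_r^{k_r}$. The iterated relation gives $U_j^{a}U_r^{b}=e^{2\pi i\Theta_{jr}ab}U_r^bU_j^a$ for all integers $a,b$, including negative ones (apply the relation to inverses). Each transposition of $U_j^{k_j}$ with $U_r^{\ell_r}$, $r<j$, therefore contributes the factor $e^{2\pi i\Theta_{jr}k_j\ell_r}=e^{-2\pi i\Theta_{rj}k_j\ell_r}$, using skew-symmetry. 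Collecting these factors over all pairs $r<j$ gives exactly $\omega(k,\ell)$, and the result is $e_{k+\ell}$. I will organise this as an induction on $n$, or on the position of the block being moved, so that the set of pairs swapped is exactly $\{(r,j):r<j\}$. This bookkeeping is the only step needing care, and I expect it to be the main obstacle, mild as it is.

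The remaining identities follow from the formula for $\omega$. The two multiplicativity identities and $\omega(k,0)=\omega(0,k)=1$ are immediate, since $\omega$ is the exponential of a bilinear form in $(k,\ell)$. The cocycle identity is automatic for an exponentiated bilinear form $\beta$: both sides equal $\exp(\beta(k,\ell)+\beta(k,r)+\beta(\ell,r))$. It can alternatively be read off from associativity $(e_ke_\ell)e_r=e_k(e_\ell e_r)$.

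For the adjoint, note that $e_ke_{-k}=\omega(k,-k)e_0=\omega(k,-k)$. Since $e_k$ is unitary, $e_k^*=e_k^{-1}=\omega(k,-k)^{-1}e_{-k}=\overline{\omega(k,-k)}\,e_{-k}$, because $|\omega|=1$. Then
$$e_\lambda e_\nu^*=\overline{\omega(\nu,-\nu)}\,\omega(\lambda,-\nu)\,e_{\lambda-\nu}.$$
By bilinearity, $\omega(\lambda,-\nu)\overline{\omega(\nu,-\nu)}=\omega(\lambda-\nu,-\nu)=\overline{\omega(\lambda-\nu,\nu)}$, which is the claimed formula.
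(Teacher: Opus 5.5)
Your proposal is correct. Moving each block $U_r^{\ell_r}$ past the blocks $U_j^{k_j}$ with $j>r$ produces exactly the factor $\prod_{r<j}e^{2\pi i\Theta_{jr}k_j\ell_r}=\omega(k,\ell)$, and the adjoint formula, the multiplicativity identities and the cocycle identity then follow from bilinearity of the exponent, as you say. The paper states this lemma without proof, as a standard fact, and your direct computation is precisely the routine verification it takes for granted.
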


The functional $\tau\big(\sum_ka_ke_k\big):=a_0$ is a faithful trace on $\mathcal{A}_{\Theta}$, invariant under the action, and the family $\{e_k:k\in\mathbb Z^n\}$ is orthonormal for the inner product $\langle a,b\rangle_\tau:=\tau(a^*b)$. The GNS Hilbert space $L^2(\mathcal{A}_{\Theta},\tau)$ is isomorphic to $\ell^2(\mathbb Z^n)$ and we have the faithful representation of $\mathcal{A}_{\Theta}$ on $L^2(\mathcal{A}_{\Theta},\tau)$ by the left-regular representation $\pi_\tau(a)(b):=ab$.

If one further assumes that the lattice $\Lambda_{\Theta}$ generated by columns of $\Theta$ is such that $\Lambda_{\Theta}+\bbz^n$ is dense in $\mathbb{R}^n$, the $C^*$-algebra $\mathbb{T}_{\Theta}$ becomes simple \cite[Page 537]{GVF}. But in our case, simpleness is not needed and we do not require any assumption on $\Theta$ except skew-symmetry.

Let $\{\nu_1,\ldots,\nu_n\}$ be the standard basis of the Lie algebra $\mathbb{R}^n$ of $\mathbb{T}^n$ and the associated derivations be $\{\delta_{\nu_1},\ldots,\delta_{\nu_n}\}$. We denote $\delta_{\nu_j}$ simply by
$\widetilde{\delta}_j$. These derivations are given by
\begin{IEEEeqnarray}{lCl}\label{derivations1}
\widetilde{\delta}_j\big(\textstyle\sum_r a_r e_r\big)=\textstyle\sum_r ir_j a_r e_r.
\end{IEEEeqnarray}
These are $\ast$-derivations and they commute. Denote $N=2^{\lfloor{n/2}\rfloor}$. On the Hilbert space $H:=L^2(\mathcal{A}_{\Theta},\tau)\otimes\bbc^N\cong\ell^2(\bbz^n)\otimes\bbc^N$, consider the unbounded operator $D = \sum_{j=1}^{n}\delta_j \otimes \sigma_j$, where $\delta_j=-i\,\widetilde{\delta}_j$ with $\widetilde{\delta}_j$ as defined in \Eref{derivations1}. Here, $\sigma_1,\ldots,\sigma_n$ are the Clifford gamma matrices in $M_N(\bbc)$ satisfying $\sigma_\ell\sigma_k+\sigma_k\sigma_\ell=2\delta_{\ell k}$ for $\ell,k\in\{1,\ldots,n\}$. Then $D$ is an unbounded essentially self-adjoint operator acting on $H$ with domain $\mathcal{A}_{\Theta}\otimes\bbc^N$, and $D$ has compact resolvent. The tuple $(\mathcal{A}_{\Theta},H,D)$ is an $n^+$-summable spectral triple for $\mathbb{T}_{\Theta}$. For differential calculus for this spectral triple, we refer to \cite{CG}. We have
\begin{IEEEeqnarray}{lCl}\label{commutator}
[D,\pi(a)]=\textstyle\sum_{\mu=1}^n\pi_\tau(\delta_\mu(a))\otimes\sigma_\mu,
\end{IEEEeqnarray}
and
\begin{IEEEeqnarray}{lCl}\label{D2}
D^2=\textstyle\sum_{\mu=1}^n\delta_\mu^2\otimes I_N\,,\qquad D^2(e_k\otimes\xi)=\|k\|^2e_k\otimes\xi,
\end{IEEEeqnarray}
for all $\xi\in\bbc^N$.

For notational brevity, throughout the article, we omit the representation symbol $\pi$ and simply regard $\mathcal{A}_\Theta$ concretely inside $\mathcal{B}\big(L^2(\mathcal{A}_{\Theta},\tau)\otimes\bbc^{2^{\lfloor n/2\rfloor}}\big)$.

\subsection{Spectral metric space and \texorpdfstring{$C^1$}{C1}-approximate order isomorphism}

We briefly recall some preliminaries on compact quantum metric spaces, developed by Rieffel \cite{Rie1,Rie2}, motivated by Connes \cite{C1,C2}. We shall use the operator-system reformulation of Rieffel's construction. For detailed discussion, we refer to \cite{KK1} (or, see \cite{BGB} for a quick overview).

\begin{dfn}
Let $X$ be a unital self-adjoint closed subspace of a unital $C^*$-algebra $\mathscr{A}$, called unital complete operator system. A seminorm $L:X\to[0,\infty]$ is called a Lip-norm if it satisfies the following conditions:
\begin{enumerate}[$(i)$]
\item $L$ is $\ast$-invariant and $L(x)=0$ if and only if $x\in\mathbb{C}$;
\item The set $\mathrm{dom}(L):=\{x\in X:L(x)<\infty\}$ is dense in $X$;
\item The following function (Monge–Kantorovi\v{c} metric)
\[
d^L(\phi,\psi):=\sup\{|\phi(x)-\psi(x)|:L(x)\leq 1\}
\]
metrises the weak $\ast$-topology on the state space $\mathcal{S}(X)$.
\end{enumerate}
The pair $(X,L)$ is called a compact quantum metric space and $L$ is called a Lip-norm.
\end{dfn}

An operator system spectral triple $(E,H,D)$ (see Defn. $1.1$ \cite{LvS}) for which $(\overline{E},L_D)$ is a compact quantum metric space, where $L_D(e):=\norm{[D,e]}$, is called a spectral metric space, or metric spectral triple. Noncommutative tori are spectral metric spaces for the canonical spectral triple discussed in Sec. $2.1$ \cite[Thm. 4.2]{Rie1}.

Let $(X,L_X)$ and $(Y,L_Y)$ be compact quantum metric spaces, and consider the coordinate projections $\,q_X:X\oplus Y\to X$ and $q_Y:X\oplus Y\to Y$. A Lip-norm $L$ on $X\oplus Y$ is called admissible if $\mathrm{dom}(L)=\mathrm{dom}(L_X)\oplus\mathrm{dom}(L_Y)$ and if the quotient semi-norms induced by $L$ on the self-adjoint parts $X_{sa}\mbox{ and }Y_{sa}$ agree with the restrictions of $L_X$ and $L_Y$, respectively. Given such an $L$, the coordinated projections induce embeddings of the state spaces $q_X^*:\mathcal{S}(X)\hookrightarrow \mathcal{S}(X\oplus Y)$ and $q_Y^*:\mathcal{S}(Y)\hookrightarrow \mathcal{S}(X\oplus Y)$. The quantum Gromov--Hausdorff distance between $(X,L_X)$ and $(Y,L_Y)$ is defined by
$$
\mathrm{dist}_q\big((X,L_X),(Y,L_Y)\big)
:=\inf\left\{\mathrm{dist}_H^L\big(q_A^*(\mathcal{S}(X)),q_B^*(\mathcal{S}(Y))\big):L\,\,\mathrm{admissible}\right\}
$$
where $\mathrm{dist}_H^L$ is the Hausdorff distance measured with respect to $d^L$. This definition agrees with Riffel's original definition in \cite{Rie3} applied to the order unit spaces $\mathrm{dom}(L_X)_{sa}$ and $\mathrm{dom}(L_Y)_{sa}$, see \cite[Lemma 2.2.2]{KK1}.

It follows that $\mathrm{dist}_q\big((X,L_X),(Y,L_Y)\big)\geq \mathrm{dist}_{GH}\big((\mathcal{S}(X),d^{L_X}),(\mathcal{S}(Y),d^{L_Y})\big)$, where $\mathrm{dist}_{GH}$ denotes the classical Gromov--Hausdorff distance. However, these two metrics (formally speaking, on isometric classes) are not equivalent \cite{KK}.

\begin{dfn}[\cite{vS}]\label{dfn:orderiso}
Let $(E,H,D)$ and $(E_{\Lambda},H_{\Lambda},D_{\Lambda})$, for all $\Lambda\ge 0$, be operator system spectral triples such that the topologies on the state spaces $\mathcal{S}(E)$ and $\mathcal{S}(E_{\Lambda})$, for each $\Lambda\ge 0$, defined by Connes' distance function agree with the weak $\ast$-topologies on them. Linear maps $\rho_{\Lambda}:E\to E_{\Lambda}$ and $\sigma_{\Lambda}:E_{\Lambda}\to E$ form a \emph{$C^1$-approximate order isomorphism} if
\begin{enumerate}[$(i)$]
\item $\rho_{\Lambda}\mbox{ and }\sigma_{\Lambda}$ are unital and positive;
\item $\rho_{\Lambda}\mbox{ and }\sigma_{\Lambda}$ are $C^1$-contractive, i.e. they are contractive both with respect to norm and with respect to Lipschitz seminorm $\|[D,\,\cdot\,]\|$, respectively $\|[D_{\Lambda},\,\cdot\,]\|$, i.e.,
$$\|[D_{\Lambda},\rho_{\Lambda}(e)]\|\le\|[D,e]\|\quad(e\in E),$$ $$
\|[D,\sigma_{\Lambda}(T)]\|\le\|[D_{\Lambda},T]\|\quad(T\in E_{\Lambda}).$$
\item we have $\|e-\sigma_{\Lambda}\circ\rho_{\Lambda}(e)\|\le\gamma_{\Lambda}\|[D,e]\|$ and $\|T-\rho_{\Lambda}\circ\sigma_{\Lambda}(T)\|\le\gamma_{\Lambda}\|[D_{\Lambda},T]\|$ for some net $(\gamma_{\Lambda})_{\Lambda\ge 0}$ with $\gamma_{\Lambda}\to 0$ as $\Lambda\to\infty$.
\end{enumerate}
We call the collection of pairs of maps $(\rho_{\Lambda},\sigma_{\Lambda})$ a $C^1$-approximate order isomorphism.
\end{dfn}

The following result is an excellent device for proving convergence of spectral truncations in the Gromov--Hausdorff distance. The quantum Gromov--Hausdorff convergence then automatically follows from Proposition $2.2.4$ in \cite{KK1}.

\begin{thm}[Thm. 5, \cite{vS}]\label{Thmofsuijlekom}
Suppose $(E,H,D)$ and $\{(E_R,H_R,D_R)\}_{R\geq 0}$ are operator system spectral triples such that the topologies on the state spaces $\mathcal{S}(E_R)$ and $\mathcal{S}(E)$ defined by the Connes' distance functions $d_{E_R}$ and $d_E$, respectively, agree with the weak $\ast$-topologies on them. If $(\rho_R,\sigma_R)$ is a $C^1$-approximate order isomorphism for $(E,H,D)$ and $\{(E_R,H_R,D_R)\}_{R\geq 0}$, then the state spaces $\{(\mathcal{S}(E_R),d_{E_R})\}_{R\geq 0}$ converge to $(\mathcal{S}(E),d_{E})$ in the Gromov--Hausdorff distance.
\end{thm}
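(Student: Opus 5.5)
The plan is to estimate the Gromov--Hausdorff distance through the correspondence (or relation) between $\mathcal{S}(E)$ and $\mathcal{S}(E_R)$ built from the pullbacks $\rho_R^*$ and $\sigma_R^*$. Since $\rho_R$ and $\sigma_R$ are unital and positive, $\rho_R^*:\mathcal{S}(E_R)\to\mathcal{S}(E)$, $\phi\mapsto\phi\circ\rho_R$, and $\sigma_R^*:\mathcal{S}(E)\to\mathcal{S}(E_R)$, $\psi\mapsto\psi\circ\sigma_R$, are well defined maps of state spaces. We then show that these maps are non-expanding and almost mutually inverse, and conclude with the standard criterion that a pair of such maps forces small Gromov--Hausdorff distance.

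The first step is to check that both pullbacks are $1$-Lipschitz for the Connes distances. For $\phi,\phi'\in\mathcal{S}(E_R)$,
\[
d_E(\rho_R^*\phi,\rho_R^*\phi')=\sup_{\|[D,e]\|\le1}|\phi(\rho_R(e))-\phi'(\rho_R(e))|\le d_{E_R}(\phi,\phi'),
\]
because the $C^1$-contractivity in condition $(ii)$ gives $\|[D_R,\rho_R(e)]\|\le\|[D,e]\|\le1$. The same argument applies to $\sigma_R^*$. One small point to address is that the supremum may be taken over self-adjoint $e$, or over all $e$ as in the paper's definition; either convention works, since $\rho_R$ is linear and the estimate holds elementwise.

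The second step is to show that the compositions are uniformly close to the identity. For $\psi\in\mathcal{S}(E)$,
\[
d_E(\psi,\rho_R^*\sigma_R^*\psi)=\sup_{\|[D,e]\|\le1}|\psi(e-\sigma_R\rho_R(e))|\le\sup_{\|[D,e]\|\le1}\|e-\sigma_R\rho_R(e)\|\le\gamma_R,
\]
using that states have norm one and condition $(iii)$. Similarly, $d_{E_R}(\phi,\sigma_R^*\rho_R^*\phi)\le\gamma_R$ for every $\phi\in\mathcal{S}(E_R)$.

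The last step is the metric-space argument. Consider the correspondence
\[
\mathcal{R}=\{(\psi,\sigma_R^*\psi)\}\cup\{(\rho_R^*\phi,\phi)\}\subset\mathcal{S}(E)\times\mathcal{S}(E_R).
\]
Its distortion is estimated using the two steps above. For two pairs of the first type, $d_{E_R}(\sigma_R^*\psi,\sigma_R^*\psi')\le d_E(\psi,\psi')$. In the other direction,
\[
d_E(\psi,\psi')\le 2\gamma_R+d_E(\rho_R^*\sigma_R^*\psi,\rho_R^*\sigma_R^*\psi')\le2\gamma_R+d_{E_R}(\sigma_R^*\psi,\sigma_R^*\psi').
\]
Mixed pairs are handled in the same way, which gives distortion at most $4\gamma_R$, say. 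Hence $\mathrm{dist}_{GH}\le 2\gamma_R\to0$. The hypothesis that Connes' distances metrize the weak$^*$ topologies guarantees that these are compact metric spaces, so the Gromov--Hausdorff distance is meaningful. Given that, the main obstacle is only the bookkeeping of the mixed-pair distortion estimates, which follow from the same triangle inequalities.
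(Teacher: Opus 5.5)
Your proposal is correct. The paper does not prove this theorem itself; it quotes it from \cite{vS}. Your argument is essentially the standard proof there: $C^1$-contractivity makes the pullbacks $\rho_R^*$ and $\sigma_R^*$ non-expanding for the Connes distances, condition $(iii)$ makes them $\gamma_R$-inverse to each other, and the resulting correspondence has small distortion. Your bookkeeping in fact gives $\mathrm{dis}\,\mathcal{R}\le 2\gamma_R$, since mixed pairs contribute only $\gamma_R$, and hence $\mathrm{dist}_{GH}\le\gamma_R$.
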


\subsection{Vector valued measure and Bochner integral}

We recall some necessary results on vector valued measures and Bochner integrals that are needed in this article. For general facts, we refer to \cite{D,DU}.

\begin{lmma}[8.6.4, \cite{D}]\label{P3}
Let $X$ be a Banach space and $f_k:[0,1]\to X$ continuously differentiable functions such that (i) $\sum_k f_k(t_0)$ converges in $X$ for some $t_0\in[0,1]$, and (ii) $\sum_k f_k'(t)$ converges uniformly on $[0,1]$. Then $\sum_k f_k(t)$ converges uniformly on $[0,1]$ to a $C^1$ function $g$, and $g'(t)=\sum_k f_k'(t)$ for all $t$.
\end{lmma}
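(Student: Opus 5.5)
The plan is to reduce the statement to the scalar-valued Fundamental Theorem of Calculus and the classical differentiation-of-series theorem, using the Hahn--Banach theorem to pass between $X$ and scalars.

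\emph{Step 1: Cauchy criterion on $X$-valued partial sums.} Write $S_m(t)=\sum_{k\le m} f_k(t)$; this is $C^1$ with $S_m'(t)=\sum_{k\le m}f_k'(t)$. For $m>p$, a mean value inequality for $C^1$ maps into a Banach space gives, for every $t\in[0,1]$,
$$
\|(S_m-S_p)(t)-(S_m-S_p)(t_0)\|\le |t-t_0|\,\sup_{s\in[0,1]}\|(S_m'-S_p')(s)\|.
$$
To prove this inequality, pick $\varphi\in X^*$ with $\|\varphi\|=1$ norming the left-hand side. Then $\varphi\circ(S_m-S_p)$ is a real or complex $C^1$ scalar function, and the scalar Fundamental Theorem of Calculus yields the bound. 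Hypothesis (ii) makes the supremum small once $m,p$ are large, and hypothesis (i) makes $\|(S_m-S_p)(t_0)\|$ small. Hence $(S_m)$ is uniformly Cauchy on $[0,1]$, and by completeness of $X$ it converges uniformly to a continuous function $g$.

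\emph{Step 2: identifying the derivative.} Let $h(t)=\sum_k f_k'(t)$, the uniform limit of continuous functions; $h$ is therefore continuous. For each $m$, $S_m(t)-S_m(t_0)=\int_{t_0}^t S_m'(s)\,ds$ as a Riemann (equivalently Bochner) integral of a continuous $X$-valued function. Again this holds because both sides agree after applying every $\varphi\in X^*$, and $X^*$ separates points.

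Letting $m\to\infty$ and using uniform convergence to pass the limit inside the integral, we get
$$
g(t)=g(t_0)+\int_{t_0}^t h(s)\,ds.
$$
Since $h$ is continuous, the difference quotient satisfies
$$
\Big\|\frac{g(t+\epsilon)-g(t)}{\epsilon}-h(t)\Big\|\le\sup_{|s-t|\le|\epsilon|}\|h(s)-h(t)\|\to0 .
$$
Thus $g$ is $C^1$ with $g'=h$.

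The only delicate point is Step 1, namely the vector-valued mean value inequality. The route through a norming functional avoids needing any vector-valued Rolle theorem, which fails in general. The rest is routine.
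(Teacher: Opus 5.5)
Your proof is correct. The paper gives no argument for this lemma; it is quoted from Dieudonn\'e, and the standard proof there rests on the mean value inequality for Banach-space-valued maps. Dieudonn\'e establishes that inequality by an elementary least-upper-bound argument, using neither the Hahn--Banach theorem nor integration, and it holds for maps that are merely differentiable.

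That proof uses the inequality twice. First, exactly as in your Step~1, it is applied to $S_m-S_p$ to get uniform Cauchyness. Second, it identifies the derivative by an $\varepsilon/3$ estimate: the bound $\|(g-S_m)(s)-(g-S_m)(t)\|\le|s-t|\sup\|h-S_m'\|$ is combined with differentiability of $S_m$ at $t$ and with $\|S_m'(t)-h(t)\|$ being small. Continuity of the $f_k'$ is never used, so that route gives a stronger theorem: it covers merely differentiable maps, and indeed maps on open connected subsets of Banach spaces.

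You obtain the same inequality from a norming functional and then identify $g'$ through the integral formula $g(t)=g(t_0)+\int_{t_0}^t h(s)\,ds$. This genuinely needs the $f_k'$ to be continuous, which the hypotheses grant. In exchange, it is shorter once scalar calculus is available, and it produces exactly the integral representation the paper later relies on in Lemma~\ref{P5} and Lemma~\ref{FTC}.
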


\begin{thm}[Theorem 2, Section 2.1, \cite{DU}]\label{P4'}
Let $(\Omega,\Sigma,\mu)$ be a finite measure space and let
$X$ be a complex Banach space. A function $f:\Omega\to X$ is $\mu$-measurable if and only if
\begin{enumerate}[$(i)$]
\item $f$ is $\mu$-essentially separably valued, i.e., there exists $E\in\Sigma$ with $\mu(E)=0$ and such that $f(\Omega\setminus E)$ is a (norm) separable subset of $X$, and
\item $f$ is weakly $\mu$-measurable.
\end{enumerate}
\end{thm}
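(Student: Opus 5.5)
The plan is to prove the two implications separately, using the definition that $f$ is $\mu$-measurable when it is a $\mu$-a.e.\ limit of $\Sigma$-simple functions. The converse direction carries the real content.

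\emph{Measurable implies (i) and (ii).} Suppose $f_m\to f$ off a null set $E$, where each $f_m$ is simple. Each $f_m$ has finite range, so $\bigcup_m f_m(\Omega)$ is countable. Its closed linear span over $\mathbb{Q}+i\mathbb{Q}$ is therefore separable, and it contains $f(\Omega\setminus E)$. This gives (i). For any $x^*\in X^*$, the functions $x^*\circ f_m$ are scalar simple functions converging to $x^*\circ f$ $\mu$-a.e., so $x^*\circ f$ is $\mu$-measurable. This gives (ii).

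\emph{(i) and (ii) imply measurable.} Redefine $f$ to be $0$ on $E$; this changes nothing a.e. Let $X_0$ be the closed linear span of $f(\Omega\setminus E)$, which is a separable Banach space. Any functional on $X_0$ extends to $X$ by Hahn--Banach, so $f$ is weakly measurable as an $X_0$-valued map. We may therefore assume $X$ is separable.

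The key step is to show that $\omega\mapsto\|f(\omega)-x\|$ is measurable for each $x\in X$. Pick a dense sequence $(y_k)$ in $X$, and for each $k$ choose, by Hahn--Banach, $x_k^*$ in the unit ball of $X^*$ with $x_k^*(y_k)=\|y_k\|$. A routine density argument shows that $\|y\|=\sup_k|x_k^*(y)|$ for all $y\in X$. Hence
$$\|f(\omega)-x\|=\sup_k|x_k^*(f(\omega))-x_k^*(x)|$$
is a countable supremum of measurable functions, and so it is measurable. I expect this step, replacing weak measurability by measurability of the norm distances via a norming sequence, to be the main obstacle. It is the only place where separability is used essentially.

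\emph{Construction of the approximations.} Fix $m$ and define
$$A_{m,k}:=\{\omega:\|f(\omega)-y_k\|<1/m\}\setminus\textstyle\bigcup_{j<k}\{\omega:\|f(\omega)-y_j\|<1/m\}.$$
These sets are measurable by the previous step, pairwise disjoint, and cover $\Omega$ by density of $(y_k)$. Set $g_m:=\sum_k y_k\chi_{A_{m,k}}$. This is a countably valued function with $\|f-g_m\|<1/m$ everywhere.

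Since $\mu$ is finite, we can choose $N_m$ with $\mu\big(\bigcup_{k>N_m}A_{m,k}\big)<2^{-m}$. Define the simple function $h_m:=\sum_{k\le N_m}y_k\chi_{A_{m,k}}$. Then $h_m$ differs from $g_m$ only on the set $B_m:=\bigcup_{k>N_m}A_{m,k}$, which has measure less than $2^{-m}$. Because $\sum_m\mu(B_m)<\infty$, Borel--Cantelli shows that almost every $\omega$ lies in only finitely many $B_m$. For such $\omega$ we have $\|f(\omega)-h_m(\omega)\|<1/m$ for all large $m$, so $h_m\to f$ $\mu$-a.e. Hence $f$ is $\mu$-measurable.
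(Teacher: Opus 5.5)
The paper does not prove this statement; it quotes Pettis's measurability theorem from Diestel--Uhl. Your argument is correct and is essentially the standard proof given there: a norming sequence of functionals makes $\|f(\cdot)-x\|$ measurable, you build countably valued $1/m$-approximants, and then pass to simple functions using $\mu(\Omega)<\infty$. One small point: under Diestel--Uhl's convention, ``weakly $\mu$-measurable'' only makes each $x^*\circ f$ measurable off a null set that depends on $x^*$. So before forming the sets $A_{m,k}$ you should discard the union of the countably many null sets attached to the $x_k^*$, which is harmless.
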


\begin{crlre}\label{P4''}
Suppose that $\Omega$ is a compact metrizable space and that $\Sigma$
contains the Borel $\sigma$-algebra of $\Omega$. If
$f:\Omega\to X$ is continuous, then $f$ is $\mu$-measurable.
\end{crlre}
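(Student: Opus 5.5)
We deduce Corollary~\ref{P4''} from Theorem~\ref{P4'}, so we verify its two hypotheses for a continuous $f:\Omega\to X$, using only that $\Sigma$ contains the Borel sets of the compact metrizable space $\Omega$.

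\emph{Separable valuedness.} A compact metrizable space is separable, so $\Omega$ has a countable dense subset $\{\omega_m\}_{m\ge 1}$. By continuity, $\{f(\omega_m)\}$ is dense in $f(\Omega)$. Indeed, given $\omega\in\Omega$, pick $\omega_{m_j}\to\omega$; then $f(\omega_{m_j})\to f(\omega)$ in norm. Hence $f(\Omega)$ is norm separable. Condition $(i)$ of Theorem~\ref{P4'} therefore holds with the null set $E=\emptyset$. Alternatively, $f(\Omega)$ is a compact subset of the metric space $X$, and compact metric spaces are separable.

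\emph{Weak measurability.} Let $x^*\in X^*$. Then $x^*\circ f:\Omega\to\bbc$ is a composition of continuous maps, hence continuous. So for every open $V\subseteq\bbc$, the set $(x^*\circ f)^{-1}(V)$ is open in $\Omega$, thus Borel, and so lies in $\Sigma$. Thus $x^*\circ f$ is $\Sigma$-measurable for every $x^*$, which means $f$ is weakly $\mu$-measurable. This is condition $(ii)$ of Theorem~\ref{P4'}.

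Both conditions of Theorem~\ref{P4'} hold, so $f$ is $\mu$-measurable. There is no real obstacle here. The only point needing care is separability, and it rests on metrizability of $\Omega$ (a compact metrizable space is second countable, hence separable). One should also note that the measure space is finite, as Theorem~\ref{P4'} requires; this is implicit in the setting of the corollary (e.g.\ Haar measure on $\mathbb{T}^n$ in later applications). If one prefers to avoid Theorem~\ref{P4'}, a direct argument also works. Using uniform continuity on the compact $\Omega$, partition $\Omega$ into finitely many Borel sets of small diameter and take $f_k$ constant on each piece. These simple functions converge to $f$ uniformly, which gives strong measurability directly.
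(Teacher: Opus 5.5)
Your proof is correct and follows the paper's argument exactly. Weak measurability comes from continuity of $x^*\circ f$, separable valuedness with $E=\varnothing$ comes from separability of the compact metrizable $\Omega$, and Theorem~\ref{P4'} then gives the result; your extra remarks (the explicit density argument, the finiteness of $\mu$, and the direct uniform approximation by simple functions) are accurate but not needed.
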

\begin{proof}
For every $x^*\in X^*$, the function
$x^*\circ f:\Omega\to\mathbb{C}$ is continuous and hence
$\mu$-measurable. Thus $f$ is weakly $\mu$-measurable. Since $\Omega$ is
compact metrizable, it is separable, and therefore $f(\Omega)$ is
separable. Hence $f$ is $\mu$-essentially separably valued by taking
$E=\varnothing$. The result follows from Theorem \ref{P4'}.
\end{proof}

\begin{thm}[Theorem 2, Section 2.2, \cite{DU}]\label{P4'''}
A $\mu$-measurable function $f:\Omega\to X$ is Bochner integrable if and only if $\int_{\Omega} \|f\|  \ d\mu< \infty$.
\end{thm}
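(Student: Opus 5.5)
This is Theorem 2 of Section 2.2 in Diestel--Uhl, and I would prove it directly from the definition of the Bochner integral. A $\mu$-measurable $f$ is Bochner integrable if there are simple functions $s_m$ with $\int_\Omega\|f-s_m\|\,d\mu\to 0$. Here $\mu$-measurability means $f$ is an a.e.\ limit of simple functions, and $\|f\|$ is then measurable as an a.e.\ limit of the measurable functions $\|s_m\|$.

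\emph{Necessity.} Suppose $f$ is Bochner integrable and fix $m$ with $\int\|f-s_m\|\,d\mu\le 1$. A simple function satisfies $\int\|s_m\|\,d\mu<\infty$, since it takes finitely many values on sets of finite measure; here $\mu$ is even finite. The triangle inequality then gives
\[
\int\|f\|\,d\mu\le\int\|f-s_m\|\,d\mu+\int\|s_m\|\,d\mu<\infty .
\]

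\emph{Sufficiency.} Suppose $\int\|f\|\,d\mu<\infty$. Choose simple $t_m\to f$ $\mu$-a.e. The main step is to arrange a domination so that dominated convergence applies. I would truncate: set
\[
s_m:=t_m\,\chi_{\{\|t_m\|\le 2\|f\|\}} .
\]
Each $s_m$ is simple, because $\{\|t_m\|\le 2\|f\|\}$ is measurable, $\|f\|$ being measurable.

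I then check pointwise convergence. At a point where $t_m\to f$ and $f\neq 0$, we eventually have $\|t_m\|\le 2\|f\|$, so $s_m\to f$. Where $f=0$, either $s_m=0$ or $\|s_m\|\le 2\|f\|=0$, so again $s_m\to f$. In all cases $\|s_m\|\le 2\|f\|$, hence $\|f-s_m\|\le 3\|f\|$, which is integrable. The scalar dominated convergence theorem then gives $\int\|f-s_m\|\,d\mu\to 0$, so $f$ is Bochner integrable.

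The only delicate point is this truncation: a naive approximating sequence need not be dominated. The truncation depends on the measurability of $\|f\|$, which I justified at the start.
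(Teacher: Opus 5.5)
Your proof is correct and is exactly the argument of Diestel--Uhl that the paper cites without reproducing (the paper gives no proof of its own): the triangle inequality against a single simple approximant for necessity, and for sufficiency the truncation $t_m\chi_{\{\|t_m\|\le 2\|f\|\}}$, dominated by $3\|f\|$, followed by scalar dominated convergence. One small caveat: an a.e.\ limit of measurable functions is measurable only when $\mu$ is complete, so in general you should truncate against a $\Sigma$-measurable version of $\|f\|$, such as $\liminf_m\|t_m\|$; this changes nothing else in the argument.
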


\begin{lmma}\label{P4}
If $X$ is a Banach space and $f:\Omega\to X$ is continuous on a compact metric space $\Omega$ equipped with a finite Borel measure $\mu$, then $f$ is Bochner integrable.
\end{lmma}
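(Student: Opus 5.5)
The plan is to obtain the statement from Corollary \ref{P4''} and Theorem \ref{P4'''}. The argument has two steps: first establish $\mu$-measurability of $f$, then check the integrability condition $\int_\Omega\|f\|\,d\mu<\infty$.

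For measurability, we note that a compact metric space $\Omega$ is in particular compact and metrizable, and $\mu$, being a finite Borel measure, is defined on a $\sigma$-algebra $\Sigma$ containing the Borel $\sigma$-algebra. Since $f$ is continuous by assumption, Corollary \ref{P4''} applies directly and shows that $f$ is $\mu$-measurable.

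For integrability, we consider the scalar function $\omega\mapsto\|f(\omega)\|$. It is the composition of the continuous map $f$ with the continuous norm on $X$, hence it is continuous and in particular Borel measurable. Since $\Omega$ is compact, this continuous real-valued function is bounded: there is $M=\sup_{\omega\in\Omega}\|f(\omega)\|<\infty$, and the supremum is attained. Therefore
$$\int_\Omega\|f\|\,d\mu\le M\,\mu(\Omega)<\infty,$$
using the finiteness of $\mu$. Theorem \ref{P4'''} now yields that $f$ is Bochner integrable.

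There is no genuine obstacle here. The only point requiring a moment's care is confirming that the hypotheses of Corollary \ref{P4''} are met: namely, that the $\sigma$-algebra on which $\mu$ is defined contains the Borel sets, which holds by definition of a Borel measure. Separability of $f(\Omega)$ is already handled inside Corollary \ref{P4''}.
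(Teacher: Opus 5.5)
Your proposal is correct and is essentially the paper's proof: $\mu$-measurability comes from Corollary \ref{P4''}, and boundedness of $\|f\|$ on the compact $\Omega$ gives $\int_\Omega\|f\|\,d\mu\le\mu(\Omega)\sup_\Omega\|f\|<\infty$. Theorem \ref{P4'''} then yields Bochner integrability.
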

\begin{proof}
By Corollary \ref{P4''}, $f$ is $\mu$-measurable; since $\Omega$ is compact and $f$ continuous, $\|f\|$ is bounded, so $\int_\Omega\|f\|\,d\mu\le \mu(\Omega)\sup_\Omega\|f\|<\infty$. Apply Theorem \ref{P4'''}.
\end{proof}

\begin{thm}[Theorem 9, Section 2.2,\cite{DU}]\label{P5'}
Let $f$ be Bochner integrable on $[0, 1]$ with respect to Lebesgue measure. Then for almost all $s \in [0, 1]$ one has $$ \lim_{h \to 0}\,\frac{1}{h} \int_{s}^{s+h} \|f(t) - f(s)\| \, dt = 0.$$ Consequently, for almost all $s \in [0, 1]$ one has $$ \lim_{h \to 0}\,\frac{1}{h} \int_{s}^{s+h} f(t) \, dt = f(s).$$
\end{thm}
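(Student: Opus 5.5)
The plan is to reduce the vector-valued statement to the scalar Lebesgue differentiation theorem, using the fact that a Bochner integrable function is essentially separably valued (Theorem \ref{P4'}). Since $f$ is Bochner integrable, it is $\mu$-measurable, so by Theorem \ref{P4'} there is a Lebesgue null set $E\subset[0,1]$ such that $f([0,1]\setminus E)$ is norm separable. I would fix a countable set $\{x_m\}_{m\ge1}\subset X$ that is norm dense in $f([0,1]\setminus E)$.

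For each $m$, consider the scalar function $g_m(t):=\|f(t)-x_m\|$. It is Lebesgue measurable, because $f$ is an a.e.\ limit of simple functions and $g_m$ is then an a.e.\ limit of simple scalar functions. It is integrable, since $g_m\le\|f\|+\|x_m\|$ and $\int\|f\|<\infty$ by Theorem \ref{P4'''}. The classical Lebesgue differentiation theorem then gives a null set $N_m$ such that
\[
\lim_{h\to0}\frac1h\int_s^{s+h}\|f(t)-x_m\|\,dt=\|f(s)-x_m\|\qquad(s\notin N_m).
\]
Here $h$ ranges over values with $s+h\in[0,1]$, and for $h<0$ the integral is read with the orientation giving a nonnegative average. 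I then set $N:=E\cup\bigcup_m N_m$, which is again a null set.

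Fix $s\notin N$ and $\varepsilon>0$, and choose $m$ with $\|f(s)-x_m\|<\varepsilon$; this is possible because $s\notin E$. The triangle inequality gives
\[
\frac1h\int_s^{s+h}\|f(t)-f(s)\|\,dt\le\frac1h\int_s^{s+h}\|f(t)-x_m\|\,dt+\|x_m-f(s)\|.
\]
The right-hand side tends to $\|f(s)-x_m\|+\|x_m-f(s)\|<2\varepsilon$. Since $\varepsilon$ is arbitrary, the first claim follows.

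For the consequence, I would use that the Bochner integral of a constant is the constant times the length of the interval, together with the norm inequality $\|\int g\|\le\int\|g\|$ for Bochner integrals. These give
\[
\Big\|\frac1h\int_s^{s+h}f(t)\,dt-f(s)\Big\|\le\frac1h\int_s^{s+h}\|f(t)-f(s)\|\,dt\to0.
\]
The only genuinely nontrivial step is the separability reduction, namely choosing a single countable dense family so that one null set works for all approximants simultaneously. Everything else is the scalar theorem together with routine measurability checks.
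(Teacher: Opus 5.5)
Your argument is correct and is essentially the standard proof of Theorem II.2.9 in Diestel--Uhl, which the paper cites without giving a proof of its own. That proof uses essential separable-valuedness to reduce to countably many scalar Lebesgue differentiation statements for $t\mapsto\|f(t)-x_m\|$, takes the union of the exceptional null sets, and concludes with the triangle inequality and $\|\int g\|\le\int\|g\|$.
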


For the following lemma, for each $t\in[0,1]$, the derivative of $g:\mathbb{R}\to X$ is a bounded linear map $ g'(t)\in\mathcal B(\mathbb R,X).$ There is a canonical isometric isomorphism $J:X\to\mathcal B(\mathbb R,X)$, defined by $J(x)(s):=sx$, with inverse $J^{-1}(f):=f(1)$. Under this identification, we regard $g'(t)$ as the vector $g'(t)(1)\in X$. Consequently, the norm-continuity of $t\mapsto g'(t)\in \mathcal B(\mathbb R, X)$ is equivalent to the norm-continuity of $t\mapsto g'(t)(1)\in X$. Moreover, as $\|g'(t)\|=\|g'(t)(1)\|$, for all $t\in \mathbb R$, so $g'$ Bochner integrable if and only if $g'(\cdot)(1)$ is Bochner integrable.

\begin{lmma}\label{P5}
If $g:\mathbb{R}\to X$ such that $g'$ norm-continuous, then $g'$ is Bochner integrable over $[0,1]$ and $g(1)-g(0)=\int_0^1 g'(t)\,dt$.
\end{lmma}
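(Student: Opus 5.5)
The plan has two parts: first obtain Bochner integrability of $g'$ from the results already recorded, then prove the fundamental theorem of calculus by reducing to scalar functions via the Hahn--Banach theorem. Throughout, $g'(t)$ is identified with the vector $g'(t)(1)\in X$, as explained just before the statement.

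For integrability, note that $t\mapsto g'(t)(1)$ is norm-continuous on the compact metric space $[0,1]$. Lebesgue measure on $[0,1]$ is a finite Borel measure, so Lemma \ref{P4} applies directly: $g'(\cdot)(1)$ is Bochner integrable on $[0,1]$. By the remark preceding the statement, which rests on $\|g'(t)\|=\|g'(t)(1)\|$, this is the same as Bochner integrability of $g'$. Hence the element $y:=\int_0^1 g'(t)\,dt\in X$ is well defined.

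For the identity $g(1)-g(0)=y$, fix an arbitrary continuous linear functional $x^*\in X^*$ and set $\varphi:=x^*\circ g:\mathbb{R}\to\bbc$.

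1. Since $x^*$ is bounded and linear, $\varphi$ is differentiable with $\varphi'(t)=x^*(g'(t)(1))$.
2. This derivative is continuous, because $g'(\cdot)(1)$ is norm-continuous and $x^*$ is continuous.
3. Applying the classical fundamental theorem of calculus to the real and imaginary parts of $\varphi$ gives $\varphi(1)-\varphi(0)=\int_0^1 x^*(g'(t)(1))\,dt$.
4. Bounded linear operators commute with Bochner integrals, since this holds for simple functions and passes to the limit. Therefore the right-hand side equals $x^*(y)$.

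We conclude that $x^*\bigl(g(1)-g(0)-y\bigr)=0$ for every $x^*\in X^*$. By the Hahn--Banach theorem the functionals in $X^*$ separate the points of $X$, so $g(1)-g(0)=y$.

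I expect no serious obstacle here. The one point to state carefully is the interchange of $x^*$ with the Bochner integral. If we prefer to avoid it, an alternative route is Theorem \ref{P5'}. Define $G(s):=\int_0^s g'(t)\,dt$ for $s\in[0,1]$. Because the integrand is continuous, the averaging in Theorem \ref{P5'} converges at every $s$, not only almost everywhere. It follows that $G'=g'$ on $[0,1]$, so $h:=g-G$ has zero derivative on $[0,1]$. A function with zero derivative is constant, by the mean value inequality or again by composing with functionals. Hence $h(1)=h(0)$, that is, $g(1)-g(0)=G(1)=y$.
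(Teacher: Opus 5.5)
Your proof is correct, but your main argument takes a different route from the paper's. The paper's proof is essentially your alternative: it sets $\phi(t)=\int_0^t g'(s)\,ds$, invokes Theorem \ref{P5'} to get $\phi'=g'$, and concludes that $g-\phi$ is constant.

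Your primary argument instead reduces everything to the scalar fundamental theorem of calculus. You pair with functionals $x^*\in X^*$, use the interchange $x^*\bigl(\int f\bigr)=\int x^*\circ f$, and then use Hahn--Banach to separate points. The interchange is also available inside the paper as Theorem \ref{P7''}, since a bounded functional is a closed operator.

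What your route buys is economy and rigor. You need neither the Lebesgue differentiation theorem nor a vector-valued mean value theorem. You also sidestep two points the paper's proof passes over quickly:
\begin{enumerate}[$(i)$]
\item Theorem \ref{P5'} only gives $\phi'(s)=g'(s)$ for almost every $s$. An almost-everywhere vanishing derivative does not force constancy (think of the Cantor function). So continuity of $g'$ is genuinely needed to upgrade to every $s$. You observe this, and the paper's subsequent remark makes it explicit via the estimate $\|\phi(t+h)-\phi(t)-g'(t)(h)\|/|h|\le\sup_{|s-t|\le|h|}\|g'(s)(1)-g'(t)(1)\|$.
\item ``Zero derivative implies constant'' for $X$-valued functions itself requires the mean value inequality or a functional argument.
\end{enumerate}
You also justify Bochner integrability explicitly via Lemma \ref{P4}, which the paper leaves implicit.

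The paper's route, on the other hand, stays within vector-valued calculus. It yields the stronger by-product that $t\mapsto\int_0^t g'(s)\,ds$ is a genuine $X$-valued antiderivative of $g'$.
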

\begin{proof}
Define $\phi(t):=\int_{0}^t g'(s)\,ds$. By Theorem \ref{P5'}, $\phi$ is differentiable and $\phi'(t)=g'(t)$. Hence $(g-\phi)'=0$, so $g-\phi$ is constant. Therefore $g(1)-g(0)=\phi(1)-\phi(0)=\int_0^1 g'(t)\,dt$.
\end{proof}

\begin{rmrk}\rm
If we remove the identification stated above, then if $g:\mathbb{R}\to X$ with $t\to g'(t)\in \mathcal{B}(\mathbb{R},X)$ norm-continuous, then over $[0,1]$, $t\to g'(t)(1)\in X$ is Bochner integrable and $g(1)-g(0)=\int_0^1 g'(t)(1)\,dt$. Still it is provable by taking $\phi(t):=\int_0^t g'(s)(1)\,ds$, then again $\phi'(t)=g'(t)\in \mathcal B(\mathbb R,X)$, because $\|\phi(t+h)-\phi(t)-g'(t)(h)\|/|h| \le \sup_{|s-t|\le |h|}\|g'(s)(1)-g'(t)(1)\|.$ For proper justification, see \cite[Page 335]{D}.
\end{rmrk}

\begin{lmma}[Theorem 4, Section 2.2, \cite{DU}]\label{P6}
For Bochner-integrable $f:\Omega\to X$, $\big\|\int_\Omega f\,d\mu\big\|\le\int_\Omega\|f\|\,d\mu$.
\end{lmma}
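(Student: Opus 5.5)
We will argue directly from the definition of the Bochner integral for simple functions, and then pass to the limit. By the definition recalled in \cite{DU}, a Bochner integrable function $f:\Omega\to X$ admits a sequence of $\mu$-simple functions $f_m=\sum_{j=1}^{k_m}x_{j,m}\chi_{E_{j,m}}$, with the sets $E_{j,m}\in\Sigma$ pairwise disjoint for fixed $m$, such that $\int_\Omega\|f_m-f\|\,d\mu\to 0$. The Bochner integral is then defined by $\int_\Omega f\,d\mu=\lim_m\int_\Omega f_m\,d\mu$, where $\int_\Omega f_m\,d\mu=\sum_j x_{j,m}\,\mu(E_{j,m})$.

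First, for a simple function with disjoint supports, the triangle inequality in $X$ gives
$$\Big\|\sum_j x_{j,m}\mu(E_{j,m})\Big\|\le\sum_j\|x_{j,m}\|\mu(E_{j,m})=\int_\Omega\|f_m\|\,d\mu,$$
where the last equality holds because $\|f_m\|=\sum_j\|x_{j,m}\|\chi_{E_{j,m}}$ pointwise, by disjointness. This proves the inequality for simple functions.

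Second, we pass to the limit on both sides. On the left, $\int_\Omega f_m\,d\mu\to\int_\Omega f\,d\mu$ in norm by definition, so $\|\int_\Omega f_m\,d\mu\|\to\|\int_\Omega f\,d\mu\|$ by continuity of the norm. On the right, the reverse triangle inequality $\big|\|f_m\|-\|f\|\big|\le\|f_m-f\|$ holds pointwise, so
$$\Big|\int_\Omega\|f_m\|\,d\mu-\int_\Omega\|f\|\,d\mu\Big|\le\int_\Omega\|f_m-f\|\,d\mu\to0.$$
Here $\|f\|$ is measurable, being a pointwise a.e. limit of the measurable functions $\|f_m\|$ along a subsequence, and it is integrable by Theorem \ref{P4'''}. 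Letting $m\to\infty$ in the simple-function inequality gives the claim.

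The only delicate point is the well-definedness of the integral (independence of the approximating sequence) and the measurability of $\|f\|$. Both are standard parts of the construction in \cite{DU} and are assumed here.
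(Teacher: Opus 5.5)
Your proof is correct. The paper gives no proof of its own and simply cites Diestel--Uhl \cite{DU}; your argument is the standard one behind that result: the triangle inequality for disjointly supported simple functions, then passage to the limit along the defining $L^1$-approximating sequence, using $\bigl|\|f_m\|-\|f\|\bigr|\le\|f_m-f\|$.
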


\begin{thm}[Theorem 3, Section 2.2, \cite{DU}]\label{P7'}
Let $(\Omega,\Sigma,\mu)$ be a finite measure space and let $(f_n)$ be a sequence of Bochner integrable $X$-valued functions on $\Omega$. If $\lim_{n\to\infty} f_n=f \  \text{in $\mu$-measure},$ i.e., $\lim_{n\to\infty}
\mu\bigl(\{\omega\in\Omega:\norm{f_n-f}>\varepsilon\}\bigr)=0
\ \text{for every }\varepsilon>0,$ and if there exists a real-valued Lebesgue ($\mu$-) integrable function $g$ on $\Omega$ with $\norm{f_n}\leq g \ \mu\text{-almost everywhere},$ then $f$ is Bochner integrable and $\lim_{n\to\infty}\int_E f_n\,d\mu=\int_E f\,d\mu \text{ for each }E\in\Sigma.$ In fact, $\lim_{n\to\infty}\int_\Omega \norm{f-f_n}\,d\mu=0$.
\end{thm}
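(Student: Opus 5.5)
This is the dominated convergence theorem for Bochner integrals, cited from \cite{DU}, so the plan is to reduce it to the scalar Lebesgue dominated convergence theorem through the norm functions $\|f_n-f\|$.

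\emph{Measurability of $f$.} First I show that $f$ is $\mu$-measurable. A sequence converging in measure has a subsequence $f_{n_k}$ converging to $f$ $\mu$-almost everywhere; this is the usual Borel--Cantelli argument with sets $\{\|f_{n_k}-f\|>2^{-k}\}$ of measure $<2^{-k}$. Each $f_{n_k}$ is measurable, so it is an a.e.\ limit of simple functions. An a.e.\ limit of measurable functions is then measurable: it is essentially separably valued, since its range lies in the closed span of the countably many separable ranges, and it is weakly measurable as a pointwise limit of scalar measurable functions. Theorem \ref{P4'} then gives measurability. Passing to the limit along the subsequence also gives $\|f\|\le g$ a.e., so $\int\|f\|\,d\mu<\infty$, and Theorem \ref{P4'''} shows $f$ is Bochner integrable.

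\emph{Convergence of $\int\|f_n-f\|\,d\mu$.} Set $h_n=\|f_n-f\|$. These are measurable real functions with $0\le h_n\le 2g$ a.e., and $h_n\to0$ in measure. I then argue by subsequences. Suppose $\int h_n\,d\mu\not\to0$. Then some subsequence satisfies $\int h_{n_j}\ge\varepsilon$ for all $j$. That subsequence still converges in measure, so it has a further subsequence converging a.e. The scalar dominated convergence theorem, with dominating function $2g$, forces the integrals along this further subsequence to tend to $0$, which is a contradiction. Hence $\int_\Omega\|f-f_n\|\,d\mu\to0$.

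\emph{Convergence on each $E\in\Sigma$.} This follows from Lemma \ref{P6}:
\[
\Big\|\int_E f_n\,d\mu-\int_E f\,d\mu\Big\|\le\int_E\|f_n-f\|\,d\mu\le\int_\Omega\|f_n-f\|\,d\mu\to0 .
\]

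The main obstacle is the measurability step. One has to extract the a.e.-convergent subsequence and verify that a.e.\ limits preserve essential separable-valuedness. The remaining steps are the routine subsequence trick together with the scalar dominated convergence theorem.
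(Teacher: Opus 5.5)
Your proof is correct. The paper gives no proof of this statement; it simply quotes it from Diestel--Uhl, and your argument is essentially the standard proof there: extract an a.e.-convergent subsequence, use Theorem~\ref{P4'} to show that $f$, as an a.e.\ limit of $\mu$-measurable functions, is $\mu$-measurable, deduce $\|f\|\le g$ a.e.\ and hence Bochner integrability from Theorem~\ref{P4'''}, and apply the scalar dominated convergence theorem (with the subsequence trick) to $\|f_n-f\|\le 2g$. The only cosmetic point is that the separable ranges and the scalar limits $x^*\circ f=\lim_k x^*\circ f_{n_k}$ hold only off a countable union of null sets (essential ranges, a.e.\ limits rather than pointwise ones), and your argument already handles this.
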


\begin{crlre}\label{P7}
In the same setup of Theorem \ref{P7'}, if $f_n\to f$ uniformly on $\Omega$, with each $f_n$ Bochner integrable, then $f$ is Bochner integrable and $\int_\Omega f_n\,d\mu\to\int_\Omega f\,d\mu$.
\end{crlre}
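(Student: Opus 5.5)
The plan is to reduce to Theorem \ref{P7'}, taking $g$ to be a constant function. By hypothesis each $f_n$ is Bochner integrable and $f_n\to f$ uniformly on $\Omega$. Then for every $\varepsilon>0$ there is $n_0$ such that $\sup_{\omega\in\Omega}\|f_n(\omega)-f(\omega)\|\le\varepsilon$ for all $n\ge n_0$. Hence
\[
\mu\bigl(\{\omega\in\Omega:\|f_n(\omega)-f(\omega)\|>\varepsilon\}\bigr)=0\qquad(n\ge n_0),
\]
which gives $f_n\to f$ in $\mu$-measure.

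Next we need a dominating function. Uniform convergence gives $n_1$ with $\|f_n-f\|\le 1$ on $\Omega$ for $n\ge n_1$. The one gap is that $f$ itself need not be known to be bounded, since in principle $\|f\|$ could be unbounded. To avoid this, discard finitely many terms: all statements concern the tail, and the limit of the integrals is unchanged. Fix $m\ge n_1$. Then $\|f_n\|\le\|f_m\|+2$ on $\Omega$ for all $n\ge n_1$. Since $f_m$ is Bochner integrable, the real function $\|f_m\|$ is $\mu$-integrable by Theorem \ref{P4'''}, and constants are integrable because $\mu$ is finite. So $g:=\|f_m\|+2$ is an integrable dominating function for the tail $(f_n)_{n\ge n_1}$.

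Applying Theorem \ref{P7'} to $(f_n)_{n\ge n_1}$ with this $g$ shows that $f$ is Bochner integrable and that $\int_\Omega f_n\,d\mu\to\int_\Omega f\,d\mu$, taking $E=\Omega$.

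Alternatively, once $f$ is known to be integrable, Lemma \ref{P6} gives the convergence directly:
\[
\Big\|\int_\Omega f_n\,d\mu-\int_\Omega f\,d\mu\Big\|\le\mu(\Omega)\,\sup_\Omega\|f_n-f\|\to0.
\]
The only subtle point is the domination step, and it is handled by the choice $g=\|f_m\|+2$.
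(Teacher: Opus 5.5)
Your proof is correct and is essentially the paper's argument: uniform convergence gives convergence in $\mu$-measure, and a dominating function is built from the norms of finitely many $f_k$ plus a constant, after which Theorem~\ref{P7'} applies. The only difference is cosmetic. The paper dominates every term at once by $g=1+\sum_{k\le N}\|f_k\|$, using $\|f_n-f_N\|\le 1$ for $n\ge N$, whereas you discard the first few terms and take $g=\|f_m\|+2$; your opening remark about taking $g$ constant should simply be dropped.
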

\begin{proof}
As $f_n$ converges to $f$ uniformly, convergence in $\mu$-measure follows. Also, there exists $N\in\mathbb{N}$ such that $\|f_n(\omega)-f_N(\omega)\|\leq 1$ for all $n\geq N$ and $\omega\in\Omega$. Let $g:\Omega\to\mathbb{R}$ be defined by $g(\omega):=1+\sum_{1\leq k\leq N}\|f_k(\omega)\|.$ Since each $f_k$ is Bochner integrable, $\|f_k\|$ is $\mu$ integrable. Hence, $g$ is $\mu$ integrable. Moreover $\|f_n\|\leq g$ for all $n\in\mathbb{N}$.
\end{proof}

\begin{thm}[Theorem 6, Section 2.2, \cite{DU}]\label{P7''}
Let $T$ be a closed linear operator defined inside $X$ and having values in a Banach space $Y$. If $f$ and $Tf$ are Bochner integrable with respect to $\mu$, then
$$T\left(\int_E f\,d\mu\right)=\int_E Tf\,d\mu$$
for all $E\in\Sigma$.
\end{thm}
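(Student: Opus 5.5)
The statement to be proved is Theorem~\ref{P7''}: for a closed operator $T$ defined in $X$ with values in $Y$, if $f$ and $Tf$ are Bochner integrable, then $T\bigl(\int_E f\,d\mu\bigr)=\int_E Tf\,d\mu$. The natural approach is to pass to the graph of $T$. Let $G(T)=\{(x,Tx):x\in\mathrm{dom}(T)\}\subseteq X\oplus Y$, with the norm $\|(x,y)\|=\|x\|+\|y\|$. Closedness of $T$ means exactly that $G(T)$ is a closed subspace, hence a Banach space in its own right. Define $F:\Omega\to G(T)$ by $F(\omega)=(f(\omega),Tf(\omega))$. I will first show that $F$ is Bochner integrable as a $G(T)$-valued function, and then use the coordinate projections to read off the identity.

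\textbf{Step 1: $F$ is Bochner integrable into $X\oplus Y$.} Since $f$ and $Tf$ are $\mu$-measurable, there are simple functions $s_n\to f$ and $t_n\to Tf$ almost everywhere. Then $(s_n,t_n)$ are simple $X\oplus Y$-valued functions converging to $F$ almost everywhere, so $F$ is $\mu$-measurable into $X\oplus Y$. Moreover $\int\|F\|\,d\mu=\int\|f\|\,d\mu+\int\|Tf\|\,d\mu<\infty$. By Theorem~\ref{P4'''}, $F$ is Bochner integrable into $X\oplus Y$.

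\textbf{Step 2: the integral lies in $G(T)$.} This is the main obstacle, because the approximating simple functions $(s_n,t_n)$ need not take values in the graph. I would avoid this by using Theorem~\ref{P4'} inside the Banach space $G(T)$. The function $F$ is essentially separably valued, and its image already sits in $G(T)$. It is also weakly measurable as a $G(T)$-valued function, since by Hahn--Banach every functional on $G(T)$ extends to one on $X\oplus Y$. Hence $F$ is $\mu$-measurable into $G(T)$, and with the same norm bound it is Bochner integrable there. The inclusion $G(T)\hookrightarrow X\oplus Y$ is bounded and commutes with integrals of simple functions, so it commutes with Bochner integrals. Consequently $\int_E F\,d\mu\in G(T)$ for every $E\in\Sigma$. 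Here one uses that $F\chi_E$ is again Bochner integrable.

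\textbf{Step 3: read off the coordinates.} The projections $p_1:X\oplus Y\to X$ and $p_2:X\oplus Y\to Y$ are bounded. Bounded operators commute with Bochner integrals, again by passing to the limit from simple functions using Lemma~\ref{P6}. This gives
\[
\int_E F\,d\mu=\Bigl(\int_E f\,d\mu,\ \int_E Tf\,d\mu\Bigr).
\]
Membership of this pair in $G(T)$ says precisely that $\int_E f\,d\mu\in\mathrm{dom}(T)$ and that $T\bigl(\int_E f\,d\mu\bigr)=\int_E Tf\,d\mu$, which is the claim.
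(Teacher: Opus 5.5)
Your proof is correct and is essentially the standard argument behind this result. The paper gives no proof of its own and only cites Diestel--Uhl, where Hille's theorem is proved in the same way: $\omega\mapsto(f(\omega),Tf(\omega))$ is treated as a Bochner integrable map into the closed graph $G(T)$, and the bounded coordinate projections are then applied.

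Your use of the Pettis measurability theorem inside $G(T)$ to get $\int_E F\,d\mu\in G(T)$ is valid. A slightly quicker route is an annihilator argument: any $\phi\in(X\oplus Y)^*$ vanishing on $G(T)$ satisfies $\phi\bigl(\int_E F\,d\mu\bigr)=\int_E\phi\circ F\,d\mu=0$, so by Hahn--Banach the integral lies in the closed subspace $G(T)$.
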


\begin{lmma}\label{P8}
If $f_k:\Omega\to X$ with $\|f_k(x)\|\le M_k$ for all $x\in\Omega$ and $\sum_k M_k<\infty$, then $\sum_k f_k$ converges uniformly on $\Omega$.
\end{lmma}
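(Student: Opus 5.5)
This is the Weierstrass M-test for Banach-space-valued functions. The plan is to show that the partial sums are uniformly Cauchy and then use completeness of $X$.

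Write $S_m(x):=\sum_{k\le m} f_k(x)$ for $x\in\Omega$; if the index set is not $\mathbb N$, fix any enumeration of it. For $m>p$ and every $x\in\Omega$, the triangle inequality together with the hypothesis $\|f_k(x)\|\le M_k$ gives
$$\|S_m(x)-S_p(x)\|\le\sum_{k=p+1}^m\|f_k(x)\|\le\sum_{k=p+1}^m M_k.$$
The right-hand side does not depend on $x$. It tends to $0$ as $p\to\infty$, because the tails of the convergent scalar series $\sum_k M_k$ vanish.

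It follows that for each fixed $x$ the sequence $(S_m(x))_m$ is Cauchy in $X$. Since $X$ is a Banach space, it converges to a limit, which we call $S(x)$.

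Letting $m\to\infty$ in the displayed estimate, and using continuity of the norm, gives
$$\sup_{x\in\Omega}\|S(x)-S_p(x)\|\le\sum_{k>p}M_k\longrightarrow 0.$$
This is exactly uniform convergence of $\sum_k f_k$ to $S$ on $\Omega$.

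The same estimate also shows that the convergence is absolute, so the limit does not depend on the chosen ordering of the terms. No step is a real obstacle; the only point needing care is that the tail bound is uniform in $x$, and this is what upgrades pointwise convergence to uniform convergence.
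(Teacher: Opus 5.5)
Your proposal is correct and follows the same approach as the paper, which simply invokes the Weierstrass $M$-test verbatim, using the uniform Cauchy criterion for the partial sums together with completeness of $X$. Your remark on fixing an enumeration, with absolute convergence making the limit independent of the ordering, is a worthwhile addition, since the paper applies the lemma to series indexed by $\mathbb{Z}^n$.
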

\begin{proof}
Verbatim from the real or complex Weierstrass $M$-test, using completeness of $X$  and the Cauchy criterion for uniform convergence.
\end{proof}

\section{A candidate for \texorpdfstring{$C^1$}{C1}-approximate order isomorphism}
\subsection{The maps \texorpdfstring{$\sigma_R,\,\rho_R$}{sigmaRrhoR} and quantum metric structure on truncated operator systems}

Consider the canonical spectral triple $(\mathcal{A}_{\Theta},L^2(\mathcal{A}_\Theta,\tau)\otimes\bbc^{2^{\lfloor{n/2}\rfloor}},D)$ for the noncommutative torus described in Section $2.1$. For $R\ge 0$ put $\Lambda_R:=\{k\in\mathbb Z^n:\|k\|\le R\}$, and $N_R:=\#\Lambda_R$. The spectral projection of $D$ onto $[-R,R]$ is
\begin{IEEEeqnarray}{lCl}\label{abc}
P_R=E_{D}([-R,R])=E_{D^2}([0,R^2])=E_{\sum_{j=1}^n\delta_j^2}([0,R^2])\otimes I_{2^{\lfloor{n/2}\rfloor}}=P_R^0\otimes I_{2^{\lfloor{n/2}\rfloor}}
\end{IEEEeqnarray}
where $P_R^0$ is the orthogonal projection onto $\mathrm{span}\{e_k:k\in\Lambda_R\}\subseteq L^2(\mathcal{A}_\Theta,\tau)$. We have rank$(P_R^0)=N_R$ and $H_R:=P_R(L^2(\mathcal{A}_\Theta,\tau)\otimes \mathbb{C}^{2^{\lfloor{n/2}\rfloor}})\cong P_R^0\,L^2(\mathcal{A}_\Theta,\tau)\otimes \mathbb{C}^{2^{\lfloor{n/2}\rfloor}}=:H_R^0\otimes\bbc^{2^{\lfloor{n/2}\rfloor}}$. Consequently, $\dim(H_R)=\text{rank}(P_R)=2^{\lfloor{n/2}\rfloor}N_R$ and $\dim(H_R^0)=N_R$. Obviously, each $P_R$ commutes with $D$.

Denote $E_R:= P_R^0 \pi_\tau (\mathcal{A}_\Theta)P_{R}^0\subseteq \mathcal B(H_R^0)\cong M_{N_R}(\mathbb{C})$. For $T\in\mathcal{B}(H_R^0)$, define $T_{\lambda\nu}:=\langle e_\lambda,Te_\nu\rangle$, $\lambda,\nu\in\Lambda_R$. We write $D_R:=P_RDP_R$, and consequently for the operator system spectral triple $(E_R, H_R, D_R)$, we have $\pi_R:E_R\to \mathcal B(H_R)$ defined as $\pi_R(T):= T\otimes I_{2^{\lfloor{n/2}\rfloor}}$. We reserve these notations throughout the article:
\begin{IEEEeqnarray}{lCl}\label{notn}
\hspace*{-1cm}E_R:=P_R^0\pi_\tau(\mathcal{A}_{\Theta})P_R^0\,,\,\,H_R:=P_R^0\,L^2(\mathcal{A}_\Theta,\tau)\otimes \mathbb{C}^{2^{\lfloor{n/2}\rfloor}}=H_R^0\otimes\bbc^{2^{\lfloor{n/2}\rfloor}}\,,\,\,D_R:=P_RDP_R
\end{IEEEeqnarray}
where $P_R=P_R^0\otimes I_{2^{\lfloor{n/2}\rfloor}}$ as in \Eref{abc}.

Motivated by the classical case of tori \cite{LvS}, we define the maps $\rho_R:\mathcal{A}_{\Theta}\to E_R\subseteq \mathcal B(H_R^0)$ and $\sigma_R: \mathcal B(H_R^0) \to \mathcal{A}_{\Theta}$ as
\begin{IEEEeqnarray}{lCl}\label{eq:1}
\rho_R(a):=P_R^0\pi_\tau(a)P_R^0\,,\qquad \sigma_R(T):=\frac1{N_R}\sum_{\lambda,\nu\in\Lambda_R}T_{\lambda\nu}\,e_\lambda e_\nu^*.
\end{IEEEeqnarray}
Observe that $\pi_R\circ\rho_R(a)=P_RaP_R$ for all $a\in\mathcal{A}_{\Theta}$.

\begin{lmma}\label{matrixcoeff}
For $a=\sum_{k}a_ke_k\in\mathcal{A}_{\Theta}$, if $\rho_R(a):=T$, then
\[
T_{\lambda\nu}=a_{\lambda-\nu}\,\omega(\lambda-\nu,\nu),\quad\lambda,\nu\in\Lambda_R.
\]
\end{lmma}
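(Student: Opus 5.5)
The plan is a direct computation of the matrix coefficient, using the orthonormality of $\{e_k\}$ in $L^2(\mathcal{A}_\Theta,\tau)$ and the multiplication rule of Lemma \ref{cocycle}. Fix $\lambda,\nu\in\Lambda_R$. Since $P_R^0$ is the orthogonal projection onto $\mathrm{span}\{e_k:k\in\Lambda_R\}$, we have $P_R^0e_\nu=e_\nu$ and $P_R^0e_\lambda=e_\lambda$. Hence
\[
T_{\lambda\nu}=\langle e_\lambda,P_R^0\pi_\tau(a)P_R^0e_\nu\rangle_\tau=\langle e_\lambda,a\,e_\nu\rangle_\tau=\tau(e_\lambda^*\,a\,e_\nu).
\]
So the projections drop out, and what remains is to compute $a e_\nu$ in the basis $\{e_k\}$.

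Write $a=\sum_k a_ke_k$. The series converges absolutely in norm because $(a_k)$ is of rapid decay and $\|e_k\|=1$, so left multiplication by $a$ is the norm limit of the partial sums. Therefore
\[
a e_\nu=\sum_k a_k\,e_ke_\nu=\sum_k a_k\,\omega(k,\nu)\,e_{k+\nu}
\]
by Lemma \ref{cocycle}, with convergence in operator norm and hence in $L^2$. Continuity of the inner product then lets us pair term by term with $e_\lambda$. Orthonormality $\langle e_\lambda,e_{k+\nu}\rangle_\tau=\delta_{\lambda,k+\nu}$ kills every term except $k=\lambda-\nu$, which gives
\[
T_{\lambda\nu}=a_{\lambda-\nu}\,\omega(\lambda-\nu,\nu).
\]

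The only point requiring care is the interchange of the infinite sum with the inner product, and absolute norm convergence handles it. There is no real obstacle beyond keeping the order of arguments in $\omega$ consistent with the convention $e_ke_\ell=\omega(k,\ell)e_{k+\ell}$ of Lemma \ref{cocycle}.
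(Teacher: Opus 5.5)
Your proposal is correct and follows essentially the same route as the paper: compute $T_{\lambda\nu}=\tau(e_\lambda^*ae_\nu)$, expand $ae_\nu=\sum_k a_k\,\omega(k,\nu)\,e_{k+\nu}$ via Lemma \ref{cocycle}, and use orthonormality to isolate $k=\lambda-\nu$. Your justification of the term-by-term interchange through absolute norm convergence, which the paper leaves implicit, is a small but welcome addition.
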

\begin{proof}
We have $$T_{\lambda\nu}=\langle e_\lambda,\pi_\tau(a)e_\nu\rangle=\tau(e_\lambda^*ae_\nu)=\textstyle\sum_ka_k\tau(e_\lambda^*e_ke_\nu)=\textstyle\sum_ka_k\omega(k,\nu)\tau(e_\lambda^*e_{k+\nu}),$$ which shows that $\tau(e_\lambda^*e_{k+\nu})=\delta_{\lambda,k+\nu}$, forcing $k=\lambda-\nu$.
\end{proof}

Now, we present a general result. For Hilbert spaces $H,K$ and a unit vector $\xi\in K$, the map $V_\xi:H\to H\otimes K$, given by $V_\xi(\eta):=\eta\otimes\xi$, is a linear isometry. For $T\in \mathcal B(H)\mbox{ and }S\in \mathcal B(K)$,
$$V_\xi^*(T\otimes S)V_\xi=\langle S\xi\,,\,\xi\rangle\,T.$$

\begin{lmma}\label{spinorcompression}
Let $n\geq 2$ and $K$ be a Hilbert space. Consider
$X=\sum_{j=1}^n\,X_j\otimes\sigma_j\in \mathcal B\big(K\otimes\mathbb C^{2^{\lfloor n/2\rfloor}}\big)$
for some $X_1,\ldots,X_n\in \mathcal B(K)$, and
$\sigma_1,\ldots,\sigma_n\in
\mathcal B\left(\mathbb C^{2^{\lfloor n/2\rfloor}}\right)$ are self-adjoint matrices satisfying
$\sigma_\mu^2=I$ and
$\sigma_\mu\sigma_\nu+\sigma_\nu\sigma_\mu=0$
for $\mu\ne\nu$.
For $\mu=1,\ldots,n$, let
$\{\xi_\mu^{j,+}:1\le j\le 2^{\lfloor n/2\rfloor-1}\}$ and
$\{\xi_\mu^{j,-}:1\le j\le 2^{\lfloor n/2\rfloor-1}\}$
be orthonormal bases of the $+1$ and $-1$ eigenspaces of $\sigma_\mu$, respectively. Let
$V_\mu^{j,\pm}:K\to K\otimes\mathbb C^{2^{\lfloor n/2\rfloor}}$
be the isometry $\eta\mapsto\eta\otimes\xi_\mu^{j,\pm}$, and define
$$
\Phi_\mu(T):=
\frac{1}{2^{\lfloor n/2\rfloor}}
\sum_{j=1}^{2^{\lfloor n/2\rfloor-1}}
\left(
(V_\mu^{j,+})^*TV_\mu^{j,+}
-
(V_\mu^{j,-})^*TV_\mu^{j,-}
\right).
$$
Then
$$
\Phi_\mu(X)=X_\mu,\text{ and so }\|X_\mu\|\le\|X\|,
\qquad \mu=1,\ldots,n.
$$
\end{lmma}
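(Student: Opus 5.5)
By linearity of $\Phi_\mu$ it suffices to evaluate $\Phi_\mu$ on each elementary tensor $X_k\otimes\sigma_k$. By the compression identity $V_\xi^*(T\otimes S)V_\xi=\langle S\xi,\xi\rangle T$ recalled just before the statement, we get
\[
\Phi_\mu(X_k\otimes\sigma_k)=\frac{1}{2^{\lfloor n/2\rfloor}}\Big(\sum_{j}\langle\sigma_k\xi_\mu^{j,+},\xi_\mu^{j,+}\rangle-\sum_j\langle\sigma_k\xi_\mu^{j,-},\xi_\mu^{j,-}\rangle\Big)X_k .
\]
The scalar in brackets equals $\mathrm{Tr}(\sigma_k\sigma_\mu)$. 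To see this, note that $\sigma_\mu$ acts as $\pm1$ on $\xi_\mu^{j,\pm}$. Since the union of the two bases is an orthonormal basis of $\mathbb C^{2^{\lfloor n/2\rfloor}}$, the bracket is $\sum_{j,\pm}\langle\sigma_k\sigma_\mu\xi_\mu^{j,\pm},\xi_\mu^{j,\pm}\rangle=\mathrm{Tr}(\sigma_k\sigma_\mu)$.

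Next I will compute $\mathrm{Tr}(\sigma_k\sigma_\mu)$. For $k=\mu$ we have $\sigma_\mu^2=I$, so the trace is $2^{\lfloor n/2\rfloor}$ and the coefficient is $1$. For $k\neq\mu$ the anticommutation relation and cyclicity of trace give $\mathrm{Tr}(\sigma_k\sigma_\mu)=-\mathrm{Tr}(\sigma_\mu\sigma_k)=-\mathrm{Tr}(\sigma_k\sigma_\mu)$, so the trace vanishes. Hence $\Phi_\mu(X)=\sum_k \frac{\mathrm{Tr}(\sigma_k\sigma_\mu)}{2^{\lfloor n/2\rfloor}}X_k=X_\mu$.

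There is also a point to justify implicitly: both eigenspaces of $\sigma_\mu$ have dimension $2^{\lfloor n/2\rfloor-1}$, as the statement's indexing assumes. This holds because, with $n\ge2$, there is some $\nu\neq\mu$, and $\sigma_\nu$ is a unitary that anticommutes with $\sigma_\mu$ and hence swaps its $\pm1$ eigenspaces. Equivalently, $\mathrm{Tr}\,\sigma_\mu=0$ by the same cyclicity trick applied to $\sigma_\nu\sigma_\mu\sigma_\nu^{-1}=-\sigma_\mu$. Here $n\ge2$ is used, and it also gives $\lfloor n/2\rfloor\ge1$.

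Finally, for the norm estimate, each compression $T\mapsto (V_\mu^{j,\pm})^*TV_\mu^{j,\pm}$ is contractive because $V_\mu^{j,\pm}$ is an isometry. There are $2\cdot 2^{\lfloor n/2\rfloor-1}=2^{\lfloor n/2\rfloor}$ such terms, each with coefficient of modulus $2^{-\lfloor n/2\rfloor}$. By the triangle inequality, $\|\Phi_\mu(T)\|\le\|T\|$, and so $\|X_\mu\|=\|\Phi_\mu(X)\|\le\|X\|$. No step is a real obstacle; the only care needed is the trace computation and the eigenspace dimension count above.
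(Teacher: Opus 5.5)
Your proof is correct. It shares the paper's skeleton: linearity, the compression identity $V_\xi^*(T\otimes S)V_\xi=\langle S\xi,\xi\rangle T$, and the triangle-inequality bound $\|\Phi_\mu(T)\|\le\|T\|$. The difference is in how you kill the cross terms $k\neq\mu$.

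The paper works vector by vector. Since $\sigma_\nu$ ($\nu\neq\mu$) maps the $\pm1$ eigenspace of $\sigma_\mu$ into the $\mp1$ eigenspace, each individual coefficient $\langle\sigma_\nu\xi_\mu^{j,\pm},\xi_\mu^{j,\pm}\rangle$ vanishes. So every single compression already satisfies $(V_\mu^{j,\pm})^*XV_\mu^{j,\pm}=\pm X_\mu$. You instead sum first, identify the signed sum with $\mathrm{Tr}(\sigma_k\sigma_\mu)$, and kill it by cyclicity of the trace.

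The paper's pointwise statement is stronger. It gives $\|X_\mu\|\le\|X\|$ from one isometric compression, so the averaging built into $\Phi_\mu$ is not actually needed for the norm estimate.

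Your version is basis-free: it amounts to $\Phi_\mu(T)=2^{-\lfloor n/2\rfloor}(\mathrm{id}\otimes\mathrm{Tr})\bigl(T(1\otimes\sigma_\mu)\bigr)$, so $\Phi_\mu$ does not depend on the chosen eigenbases. Moreover, the identity $\Phi_\mu(X)=X_\mu$ then uses only the total dimension $2^{\lfloor n/2\rfloor}$. The equal-dimension count is needed only to make the statement's indexing consistent. You justify it exactly as the paper does, or alternatively via $\mathrm{Tr}\,\sigma_\mu=0$.
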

\begin{proof}
Fix any $\mu\in\{1,\ldots,n\}$. Since $n\ge2$, choose $\nu\ne\mu$. The anticommutation relation implies that $\sigma_\nu$ maps the $+1$ eigenspace of $\sigma_\mu$ unitarily onto the $-1$ eigenspace of $\sigma_\mu$. Hence these eigenspaces have the same dimension, namely $2^{\lfloor n/2\rfloor-1}$.

Since
$\sigma_\mu\xi_\mu^{j,\pm}=\pm\xi_\mu^{j,\pm}$, we have
$$
(V_\mu^{j,\pm})^*XV_\mu^{j,\pm}
=
\sum_{\nu=1}^n
\langle\sigma_\nu\xi_\mu^{j,\pm},\xi_\mu^{j,\pm}\rangle X_\nu.
$$
For $\nu\ne\mu$, using the anticommutation relation, we obtain
$\sigma_\mu\sigma_\nu=-\sigma_\nu\sigma_\mu$ and therefore
$$
\sigma_\mu(\sigma_\nu\xi_\mu^{j,\pm})
=
-\sigma_\nu\sigma_\mu\xi_\mu^{j,\pm}
=
\mp\sigma_\nu\xi_\mu^{j,\pm}.
$$
Thus $\sigma_\nu\xi_\mu^{j,\pm}$ belongs to the $\mp1$ eigenspace of $\sigma_\mu$, whereas $\xi_\mu^{j,\pm}$ belongs to the $\pm1$ eigenspace. Since $\sigma_\mu$ is self-adjoint, these eigenspaces are orthogonal. Hence
$$
\langle\sigma_\nu\xi_\mu^{j,\pm},\xi_\mu^{j,\pm}\rangle=0,
\qquad \nu\ne\mu.
$$
On the other hand,
$\langle\sigma_\mu\xi_\mu^{j,+},\xi_\mu^{j,+}\rangle=1$
and
$\langle\sigma_\mu\xi_\mu^{j,-},\xi_\mu^{j,-}\rangle=-1$.
Therefore,
$(V_\mu^{j,+})^*XV_\mu^{j,+}=X_\mu$
and
$(V_\mu^{j,-})^*XV_\mu^{j,-}=-X_\mu$.
Consequently,
$$
\begin{aligned}
\Phi_\mu(X)
&=
\frac{1}{2^{\lfloor n/2\rfloor}}
\sum_{j=1}^{2^{\lfloor n/2\rfloor-1}}
\bigl(X_\mu-(-X_\mu)\bigr)\\
&=
\frac{1}{2^{\lfloor n/2\rfloor}}
\,2^{\lfloor n/2\rfloor-1}\,2X_\mu
=X_\mu.
\end{aligned}
$$

Finally, since $V_\mu^{j,\pm}$ is an isometry,
$\|(V_\mu^{j,\pm})^*TV_\mu^{j,\pm}\|\le\|T\|$. Hence
$$
\begin{aligned}
\|\Phi_\mu(T)\|
&\le
\frac{1}{2^{\lfloor n/2\rfloor}}
\sum_{j=1}^{2^{\lfloor n/2\rfloor-1}}
\left(
\|(V_\mu^{j,+})^*TV_\mu^{j,+}\|
+
\|(V_\mu^{j,-})^*TV_\mu^{j,-}\|
\right)\\
&\le
\frac{1}{2^{\lfloor n/2\rfloor}}
\,2^{\lfloor n/2\rfloor-1}\,2\|T\|
=\|T\|.
\end{aligned}
$$
Taking $T=X$ gives
$\|X_\mu\|=\|\Phi_\mu(X)\|\le\|X\|$.
\end{proof}

On $H_R^0=\mathrm{span}\{e_\lambda:\lambda=(\la_1,\ldots,\la_n)\in\Lambda_R\}$, define
\begin{IEEEeqnarray}{lCl}\label{to refer in 4.2}
L_\mu e_\lambda:=\lambda_\mu e_\lambda,\qquad\text{i.e.}\qquad L_\mu:=P_R^0\,\delta_\mu\,P_R^0,
\end{IEEEeqnarray}
for $\mu=1,\ldots,n$. Thus, $L_\mu$ is the restriction of the derivation $\delta_\mu$ to the truncated space. Also observe that $L_{\mu}$ commutes with each other, being diagonal matrices.

\begin{lmma}\label{deltabound1}
We have $D_R=\sum_{j=1}^n\,L_j\otimes\sigma_j$, and consequently, for $T\in E_R\subseteq\mathcal B(H_R^0)$, $$[D_R,T\otimes I_{2^{\lfloor{n/2}\rfloor}}]=\sum_{j=1}^n[L_j,T]\otimes\sigma_j,$$ with $[L_\mu,T]_{\lambda\nu}=(\lambda_\mu-\nu_\mu)T_{\lambda\nu}$. Furthermore, we have $\|[L_\mu,T]\|\le\|[D_R,T\otimes I_{2^{\lfloor{n/2}\rfloor}}]\|$.
\end{lmma}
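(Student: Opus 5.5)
The plan is to establish the three assertions of the lemma in order, each by a direct computation on the orthonormal basis $\{e_\lambda\otimes\xi\}$ of $H_R$, and then to finish with \Lref{spinorcompression}.

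\emph{The formula for $D_R$.} Since $D=\sum_j\delta_j\otimes\sigma_j$ and $P_R=P_R^0\otimes I_{2^{\lfloor n/2\rfloor}}$, we have
$$P_RDP_R=\sum_j P_R^0\delta_jP_R^0\otimes\sigma_j=\sum_j L_j\otimes\sigma_j,$$
by the definition \Eref{to refer in 4.2}. This is legitimate because $\delta_j$ is diagonal in the basis $e_k$, with $\delta_je_k=k_je_k$, as follows from \Eref{derivations1} and $\delta_j=-i\widetilde\delta_j$. Hence $\delta_j$ commutes with $P_R^0$, and $D_R$ is a well-defined operator on the finite-dimensional space $H_R$.

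\emph{The commutator formula.} For $T\in E_R$, the identity $(A\otimes B)(T\otimes I)=AT\otimes B$ gives
$$[D_R,T\otimes I]=\sum_j(L_jT-TL_j)\otimes\sigma_j=\sum_j[L_j,T]\otimes\sigma_j.$$
For the matrix coefficients, use that $L_\mu$ is self-adjoint and diagonal, with $L_\mu e_\lambda=\lambda_\mu e_\lambda$:
$$\langle e_\lambda,L_\mu Te_\nu\rangle=\langle L_\mu e_\lambda,Te_\nu\rangle=\lambda_\mu T_{\lambda\nu},\qquad \langle e_\lambda,TL_\mu e_\nu\rangle=\nu_\mu T_{\lambda\nu}.$$
Subtracting these yields $[L_\mu,T]_{\lambda\nu}=(\lambda_\mu-\nu_\mu)T_{\lambda\nu}$.

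\emph{The norm bound.} Apply \Lref{spinorcompression} with $K=H_R^0$, $X_j=[L_j,T]\in\mathcal B(H_R^0)$ and $X=[D_R,T\otimes I]$. The hypotheses on the $\sigma_j$ are exactly those required there: they are self-adjoint, $\sigma_j^2=I$, and they anticommute, by the Clifford relations. The lemma then gives $\|[L_\mu,T]\|=\|\Phi_\mu(X)\|\le\|X\|$.

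None of these steps is a real obstacle. The only point needing care is that $P_R^0$ commutes with each $\delta_j$, so that compressing $D$ coincides with the sum of the compressed $\delta_j$ tensored with $\sigma_j$; this holds because every operator involved is diagonal in the basis $\{e_k\}$.
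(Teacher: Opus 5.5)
Your proposal is correct and follows the paper's proof essentially step for step. You compress $D$ using $P_R=P_R^0\otimes I_{2^{\lfloor n/2\rfloor}}$, read off $[L_\mu,T]_{\lambda\nu}=(\lambda_\mu-\nu_\mu)T_{\lambda\nu}$ from the diagonal form of $L_\mu$, and obtain the norm bound from Lemma~\ref{spinorcompression} with $K=H_R^0$ and $X_j=[L_j,T]$. One small caveat: the self-adjointness of the $\sigma_j$ is a standing convention for the gamma matrices, not a consequence of the Clifford relations, though the paper assumes it as well.
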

\begin{proof}
Since $P_R=P_R^0\otimes I_{2^{\lfloor{n/2}\rfloor}}$, we have $D_R=\sum_{j=1}^nL_j\otimes\sigma_j$.
Since $L_\mu$ is diagonal with $L_\mu e_\lambda=\lambda_\mu e_\lambda$, so if $T=[T_{\la\nu}]_{\la,\nu\in\Lambda_R}$, then $(L_\mu T)_{\lambda\nu}=\lambda_\mu T_{\la\nu}$ and $(TL_\mu)_{\lambda\nu}=T_{\la\nu}\nu_\mu$. Hence, $[L_\mu,T]_{\lambda\nu}=(\lambda_\mu-\nu_\mu)T_{\la\nu}$, and the desired expression of the commutator follows. Finally, applying Lemma \ref{spinorcompression} with $K=H_R^0$, the required inequality follows.
\end{proof}

\begin{ppsn}
$(E_R,L_R)$ are compact quantum metric spaces for all $R\ge 0$, where $L_R(T):=\|[D_R,T\otimes I_{2^{\lfloor{n/2}\rfloor}}]\|$, for $T\in E_R$.
\end{ppsn}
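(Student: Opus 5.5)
Since $E_R$ is a finite-dimensional operator system (a unital self-adjoint subspace of $\mathcal B(H_R^0)\cong M_{N_R}(\bbc)$), the plan is to verify the three conditions of the definition of a Lip-norm directly, the finite-dimensionality doing most of the work. First we check that $E_R$ is indeed a unital self-adjoint closed subspace: $P_R^0 1 P_R^0=P_R^0$ is the unit of $\mathcal B(H_R^0)$, $(P_R^0 a P_R^0)^*=P_R^0a^*P_R^0$, and finite-dimensional subspaces are closed. The seminorm $L_R$ is finite everywhere, so $\mathrm{dom}(L_R)=E_R$ is trivially dense, giving condition $(ii)$. $\ast$-invariance follows from self-adjointness of $D_R$: $[D_R,T^*\otimes I]=-([D_R,T\otimes I])^*$, so the norms agree.

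The key step is showing $L_R(T)=0$ forces $T\in\bbc P_R^0$. If $L_R(T)=0$, then by Lemma \ref{deltabound1} we get $\|[L_\mu,T]\|\le L_R(T)=0$ for every $\mu$, hence $(\lambda_\mu-\nu_\mu)T_{\lambda\nu}=0$ for all $\lambda,\nu\in\Lambda_R$ and all $\mu$. For $\lambda\ne\nu$ some coordinate differs, so $T_{\lambda\nu}=0$; thus $T$ is diagonal. Now use that $T\in E_R$: by Lemma \ref{matrixcoeff}, $T=\rho_R(a)$ has $T_{\lambda\lambda}=a_0\,\omega(0,\lambda)=a_0$, independent of $\lambda$. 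Hence $T=a_0P_R^0$, a scalar. Conversely scalars obviously have $L_R=0$. This is the step where one must genuinely use the structure of $E_R$ (Toeplitz-type constancy along diagonals), since a general diagonal matrix in $\mathcal B(H_R^0)$ would commute with all $L_\mu$; I expect it to be the only nontrivial point.

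Finally, for condition $(iii)$ we use the standard finite-dimensional argument: $L_R$ is a norm on the quotient $E_R/\bbc P_R^0$ (finite-dimensional), so it dominates a constant multiple of the quotient norm, i.e. there is $C>0$ with $\inf_{c\in\bbc}\|T-c\|\le C\,L_R(T)$. Therefore the set $\{T\in (E_R)_{sa}:L_R(T)\le1\}$ modulo scalars is bounded, hence (by Rieffel's criterion, or directly) its image in $E_R/\bbc$ is totally bounded, and $d^{L_R}$ is a finite metric on $\mathcal S(E_R)$ bounded by $2C$. Since $E_R$ is finite-dimensional, the weak$^*$-topology on $\mathcal S(E_R)$ is the norm topology of a compact subset of a finite-dimensional dual, and $d^{L_R}$ is a norm-like metric there: $d^{L_R}(\phi,\psi)=\sup\{|(\phi-\psi)(T)|:L_R(T)\le1\}$ defines, on the finite-dimensional space of functionals vanishing on the unit, a genuine norm (it separates points because elements with $L_R\le 1$ span $E_R$ modulo scalars, and it is finite by the bound above). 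All norms on finite-dimensional spaces being equivalent, $d^{L_R}$ metrizes the weak$^*$-topology, completing the proof.
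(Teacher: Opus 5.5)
Your proposal is correct and follows the paper's proof. The heart of both arguments is the same kernel computation: Lemma~\ref{deltabound1} kills the off-diagonal entries, and Lemma~\ref{matrixcoeff} shows that the diagonal entries of $\rho_R(a)$ all equal $a_0$.

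The only difference is the last step. The paper notes that $L_R$ is a norm on $E_R/\mathbb{C}I_{N_R}$ and then cites Rieffel's Theorem 1.8. You instead verify the weak$^*$-metrization directly, by observing that $d^{L_R}$ comes from a genuine norm on the finite-dimensional space of functionals vanishing on the unit. This is a valid, self-contained substitute for that citation.
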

\begin{proof}
Clearly $L_R(T)=L_R(T^*)$, and we show that $\ker(L_R)=\mathbb{C}.\, I_{N_R}$. Suppose $[D_R, T\otimes I_{2^{\lfloor{n/2}\rfloor}}]=0$. By \Lref{deltabound1}, we get $[L_\mu,T]=0$ for $\mu=1,\ldots,n$, which further implies that $[L_\mu,T]_{\lambda\nu}=(\lambda_\mu-\nu_\mu)T_{\lambda\nu}=0$ for all $\la, \nu \in \Lambda_R$.  So when $\la\neq \nu$, we have $T_{\la\nu}=0$, and if $T=\rho_R(a)\in E_R$, then for all $\la\in\Lambda_R$, we have $T_{\la\la}=a_{\la-\la}w(\la-\la,\la)=a_0$ by \Lref{matrixcoeff}. This implies that $T=a_0 I_{N_R}$, and consequently $T$ is scalar. The converse direction is obvious. Therefore, $L_R$ induces a norm on $E_R/\mathbb{C}.\, I_{N_R}$, and because of finite-dimensionality it is equivalent to the quotient norm $\|.\|_q$. By \cite[Theorem 1.8]{Rie1}, the metric induced by \(L_R\) on the state space \(\mathcal{S}(E_R)\) metrizes the weak $\ast$-topology on it.
\end{proof}

\subsection{Compositions and \texorpdfstring{$C^1$}{C1}-contractivity of
\texorpdfstring{$\rho_R$ and $\sigma_R$}{(rhoR, sigmaR)}}

In this subsection, we obtain the precise expressions for the compositions $\sigma_R\circ\rho_R$ and $\rho_R\circ\sigma_R$. This is followed by proving the $C^1$-contractivity of the maps $\rho_R$ and $\sigma_R$.

Recall the following result from \cite{LvS}.

\begin{lmma}\label{mR(k)}
For $k\in\mathbb Z^n$, let $N_L(\Lambda_R,k):=\#\{(\lambda,\nu)\in\Lambda_R^2:\lambda-\nu=k\}$ and $m_R(k):=N_L(\Lambda_R,k)/N_R$, then
\begin{enumerate}[$(i)$]
\item $m_R(k)\in[0,1]$.
\item $m_{R}(k)=0$ if $k\notin \Lambda_{R}-\Lambda_{R}$.
\item $m_R(k)=m_{R}(-k)$.
\item Moreover define $L(k):=\overline{B}(0,R)\cap \overline{B}(k,R)\cap \mathbb{Z}^n$, and $N_L'(R,k)=\#L(k)$. Then $m_R(k)=N_L'(R,k)/N_R$.
\item For fixed $k\in\mathbb Z^n$, $m_R(k)\to1$ as $R\to\infty$.
\end{enumerate}
\end{lmma}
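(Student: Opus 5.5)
The five items are elementary counting statements about the finite set $\Lambda_R=\{k\in\mathbb Z^n:\|k\|\le R\}$. Items (i)--(iv) will come from one bijection, and (v) needs a lattice-point estimate. The plan is to first identify the set being counted. Fix $k\in\mathbb Z^n$. The map $(\lambda,\nu)\mapsto\nu$ is a bijection from $\{(\lambda,\nu)\in\Lambda_R^2:\lambda-\nu=k\}$ onto
\[
\{\nu\in\mathbb Z^n:\|\nu\|\le R,\ \|\nu+k\|\le R\}=\overline{B}(0,R)\cap\overline{B}(-k,R)\cap\mathbb Z^n .
\]
Composing with $\nu\mapsto -\nu$, which preserves $\mathbb Z^n$ and $\overline B(0,R)$, maps this set bijectively onto $\overline{B}(0,R)\cap\overline{B}(k,R)\cap\mathbb Z^n=L(k)$. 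This gives $N_L(\Lambda_R,k)=N'_L(R,k)$, which is (iv).

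The next three items follow directly from this identification. For (i), $L(k)\subseteq\Lambda_R$, so $0\le N_L(\Lambda_R,k)\le N_R$ and hence $m_R(k)\in[0,1]$. For (ii), if $k\notin\Lambda_R-\Lambda_R$, the counted set is empty by definition. For (iii), the swap $(\lambda,\nu)\mapsto(\nu,\lambda)$ is a bijection between the pairs with difference $k$ and those with difference $-k$.

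Item (v) is the only real estimate. By (iv), $N_R-N'_L(R,k)$ counts the lattice points $x$ with $\|x\|\le R$ but $\|x-k\|>R$. By the triangle inequality, every such $x$ satisfies $R-\|k\|<\|x\|\le R$, so it lies in the annulus $A_R=\{R-\|k\|<\|x\|\le R\}$. This gives
\[
1-m_R(k)\le \frac{\#(A_R\cap\mathbb Z^n)}{N_R}.
\]
To bound the annulus count, I will attach to each lattice point $x$ the unit cube $x+[-\tfrac12,\tfrac12]^n$, which has diameter $\sqrt n/2$. The cubes attached to points of $A_R$ are disjoint and lie in the thickened annulus $R-\|k\|-\sqrt n/2<\|y\|\le R+\sqrt n/2$. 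Their number is therefore at most the volume of that region, $\omega_n\big((R+c)^n-(R-c)_+^n\big)=O(R^{n-1})$ with $c=\|k\|+\sqrt n/2$, where $\omega_n$ is the volume of the unit ball. In the same way, the cubes around points of $\Lambda_R$ cover $\overline B(0,R-\sqrt n/2)$, so $N_R\ge\omega_n(R-\sqrt n/2)^n$, which is of order $R^n$. The ratio is $O(1/R)$, and so $m_R(k)\to1$ as $R\to\infty$.

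The main obstacle is the volume-comparison step in (v). The one point to handle carefully is that the thickening constants must depend only on $n$ and on the fixed $k$, not on $R$. All other items are pure bookkeeping with bijections.
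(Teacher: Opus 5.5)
Your argument is correct, and for (i)--(iv) it is essentially the paper's. The paper maps $(\lambda,\nu)\mapsto\lambda$ directly onto $L(k)$, with inverse $\lambda\mapsto(\lambda,\lambda-k)$. This avoids your detour through $\overline{B}(-k,R)$ and the reflection $\nu\mapsto-\nu$. The paper gets (i) from the fact that each $\nu$ has at most one partner $\lambda$; your containment $L(k)\subseteq\Lambda_R$ is equivalent.

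The real difference is (v). The paper does not prove it but cites \cite[Lemma 3.7]{LvS}. You instead give a self-contained lattice-point count. You bound $1-m_R(k)$ by the proportion of $\Lambda_R$ lying in the annulus $R-\|k\|<\|x\|\le R$, and compare both counts with volumes using unit cubes. Your route costs a few lines, but it makes the lemma independent of \cite{LvS}. It also gives an explicit rate $1-m_R(k)=O_{n,k}(1/R)$. That is more than the paper needs, since Proposition~\ref{concentration} only uses $m_R(k)\to1$ for finitely many fixed $k$.

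There is one slip in wording. The cube $x+[-\tfrac12,\tfrac12]^n$ has diameter $\sqrt{n}$, not $\sqrt{n}/2$. What you actually use is that every point of the cube lies within $\sqrt{n}/2$ of its center $x$, which is true. So your thickening constants, and hence the estimates $\#(A_R\cap\mathbb Z^n)=O(R^{n-1})$ and $N_R\ge\omega_n(R-\sqrt{n}/2)^n$, are right. The cubes are also only disjoint up to boundaries, but disjoint interiors are all the volume comparison requires.
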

\begin{proof}
For $(i)$, obviously for each $\nu\in \Lambda_R$, there is atmost one $\la\in \Lambda_R$ such that $\la-\nu=k$. Parts $(ii)$ and $(iii)$ are easy. For $(iv)$, let $A(k):=\{(\lambda,\nu)\in\Lambda_R^2:\lambda-\nu=k\}$. Then $\phi:A(k)\to L(k)$ defined by $\phi((\la,\nu)):=\la$, is a well-defined bijection with inverse $\la\mapsto (\la,\la-k).$ Part $(v)$ follows from \cite[Lemma 3.7]{LvS}.
\end{proof}

\begin{ppsn}\label{compositions}
We have
\begin{IEEEeqnarray}{lCl}\label{eq:3.3}
\sigma_R\circ\rho_R=F_{m_R},\qquad\text{where }\, F_{m_R}\big(\textstyle\sum_ka_ke_k\big):=\textstyle\sum_km_R(k)a_ke_k,
\end{IEEEeqnarray} and
\begin{IEEEeqnarray}{lCl}\label{eq:3.4}
\rho_R\circ\sigma_R=S_{m_R}\ \text{on }E_R,\qquad\text{where }\,\big(S_{m_R}(T)\big)_{\lambda\nu}:=m_R(\lambda-\nu)T_{\lambda\nu}.
\end{IEEEeqnarray}
\end{ppsn}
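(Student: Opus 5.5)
Both identities are direct computations from the matrix coefficients in Lemma \ref{matrixcoeff} and the cocycle relations in Lemma \ref{cocycle}. The only thing to watch is the product of phases $\omega$, which has to cancel.

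\emph{First identity.} Take $a=\sum_k a_ke_k$ and put $T=\rho_R(a)$, so that $T_{\lambda\nu}=a_{\lambda-\nu}\,\omega(\lambda-\nu,\nu)$. Lemma \ref{cocycle} gives $e_\lambda e_\nu^*=\overline{\omega(\lambda-\nu,\nu)}\,e_{\lambda-\nu}$. Substituting both into the definition of $\sigma_R$ yields
\[
\sigma_R(\rho_R(a))=\frac1{N_R}\sum_{\lambda,\nu\in\Lambda_R}a_{\lambda-\nu}\,\omega(\lambda-\nu,\nu)\,\overline{\omega(\lambda-\nu,\nu)}\,e_{\lambda-\nu}.
\]
Since $\omega$ takes values in $\mathbb S^1$, the two phases multiply to $1$. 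Now group the terms by $k=\lambda-\nu$. The number of pairs with a given difference $k$ is $N_L(\Lambda_R,k)$, so
\[
\sigma_R(\rho_R(a))=\sum_k \frac{N_L(\Lambda_R,k)}{N_R}\,a_ke_k=\sum_k m_R(k)\,a_ke_k=F_{m_R}(a).
\]
The sum over $\lambda,\nu$ is finite and every $k$ arising lies in $\Lambda_R-\Lambda_R$, so no convergence issue occurs. By Lemma \ref{mR(k)}(ii) we may write the result as a sum over all $k$.

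\emph{Second identity.} Let $T\in E_R$ be arbitrary. The key step is to compute $\rho_R(e_\lambda e_\nu^*)$. Writing $e_\lambda e_\nu^*=\overline{\omega(\lambda-\nu,\nu)}\,e_{\lambda-\nu}$ and applying Lemma \ref{matrixcoeff} to $b=e_{\lambda-\nu}$, whose only nonzero Fourier coefficient is $b_{\lambda-\nu}=1$, we get
\[
\rho_R(e_\lambda e_\nu^*)_{\alpha\beta}=\overline{\omega(\lambda-\nu,\nu)}\,\delta_{\alpha-\beta,\lambda-\nu}\,\omega(\alpha-\beta,\beta)
\]
for $\alpha,\beta\in\Lambda_R$. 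Multiply by $T_{\lambda\nu}/N_R$ and sum over $\lambda,\nu$. Fix $(\alpha,\beta)$ and set $k=\alpha-\beta$; only pairs with $\lambda-\nu=k$ survive, giving
\[
(\rho_R\sigma_R(T))_{\alpha\beta}=\frac1{N_R}\sum_{\lambda-\nu=k}T_{\lambda\nu}\,\overline{\omega(k,\nu)}\,\omega(k,\beta).
\]

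Here we use that $T$ lies in $E_R$ and not merely in $\mathcal B(H_R^0)$. Write $T=\rho_R(a)$, so $T_{\lambda\nu}=a_k\,\omega(k,\nu)$ whenever $\lambda-\nu=k$. Then $T_{\lambda\nu}\overline{\omega(k,\nu)}=a_k$. Each surviving summand therefore equals $a_k\,\omega(k,\beta)$, which is exactly $T_{\alpha\beta}$. There are $N_L(\Lambda_R,k)$ such summands, so
\[
(\rho_R\sigma_R(T))_{\alpha\beta}=m_R(\alpha-\beta)\,T_{\alpha\beta},
\]
which is the claimed formula for $S_{m_R}$.

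The main obstacle is this phase cancellation in the second identity. For a general matrix $T$ the phases $\overline{\omega(k,\nu)}\,\omega(k,\beta)$ do not cancel, and the formula fails. It is precisely the ``twisted Toeplitz'' form $T_{\lambda\nu}=a_{\lambda-\nu}\,\omega(\lambda-\nu,\nu)$ of elements of $E_R$, from Lemma \ref{matrixcoeff}, that makes each summand independent of $\nu$. This is why the statement restricts $\rho_R\circ\sigma_R=S_{m_R}$ to $E_R$.
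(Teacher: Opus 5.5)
Your argument is correct and matches the paper's: the first identity is the same phase cancellation $\omega(\lambda-\nu,\nu)\overline{\omega(\lambda-\nu,\nu)}=1$ followed by grouping over $k=\lambda-\nu$. For the second identity, the paper writes $T=\rho_R(a)$, uses the first identity to get $\sigma_R(T)=F_{m_R}(a)$, and reads off $(\rho_R(F_{m_R}(a)))_{\lambda\nu}=m_R(\lambda-\nu)\,a_{\lambda-\nu}\,\omega(\lambda-\nu,\nu)$ from Lemma~\ref{matrixcoeff}; your entrywise computation unrolls exactly this, and as a small bonus it also gives a formula for $\rho_R\circ\sigma_R$ on all of $\mathcal B(H_R^0)$.
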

\begin{proof}
Let $a=\sum_ka_ke_k$. Then
$$\sigma_R\rho_R(a)=\frac{1}{N_R}\sum_{\la,\nu\in \Lambda_R}(\rho_R(a))_{\la\nu}e_\la e_\nu^*.$$
By Lemma \ref{matrixcoeff} and  \ref{cocycle}, we get
$(\rho_{R}(a))_{\la\nu}=a_{\lambda-\nu}\,\omega(\lambda-\nu,\nu)$ and $e_\lambda e_\nu^*=\overline{\omega(\lambda-\nu,\nu)}\,e_{\lambda-\nu}.$ 
We have
$$\sigma_R\rho_R(a)=\frac1{N_R}\sum_{\lambda,\nu\in\Lambda_R}{a_{\lambda-\nu}\omega(\lambda-\nu,\nu)}\cdot{\overline{\omega(\lambda-\nu,\nu)}}_{}\,e_{\lambda-\nu}=\frac1{N_R}\sum_{\lambda,\nu\in\Lambda_R}a_{\lambda-\nu}e_{\lambda-\nu}.$$
The phase cancels exactly, for any $\Theta$. Grouping by $k=\lambda-\nu$ (there are exactly $N_L(\Lambda_R,k)$ such pairs) gives $\sigma_R\rho_R(a)=\sum_km_R(k)a_ke_k=F_{m_R}(a)$, and thus proving \eqref{eq:3.3}.

For \eqref{eq:3.4}, let $T=[T_{\la\nu}]_{\la,\nu}\in E_R\subseteq\mathcal B(H_R^0)$, $T=\rho_R(a)$, so that $T_{\la\nu}=a_{\lambda-\nu}\omega(\lambda-\nu,\nu)$. By \eqref{eq:1}, $\sigma_R(T)=F_{m_R}(a)$, with Fourier coefficients $(\sigma_R(T))_k=m_R(k)a_k$. So, by \Lref{matrixcoeff} we have $$(\rho_R(\sigma_R(T)))_{\lambda\nu}=(\sigma_R(T))_{\lambda-\nu}\,\omega(\lambda-\nu,\nu)=m_R(\lambda-\nu)a_{\lambda-\nu}\,\omega(\lambda-\nu,\nu)=m_R(\lambda-\nu)T_{\lambda\nu}.$$ Hence $\rho_R\circ\sigma_R(T)=S_{m_R}(T)$.
\end{proof}

Now we turn to prove the $C^1$-contractivity of $(\rho_R,\, \sigma_R)$. Recall that for any projection $P\in\mathcal{B}(H)$ and $\pi$ a unital $\ast$-representation of $A$ on $H$,
the map $a\mapsto P\pi(a)P$ is completely positive and contractive, i.e.
\begin{IEEEeqnarray}{lCl}\label{P13}
\|P\pi(a)P\|\le\|\pi(a)\|,\mbox{ or, more generally, }\|PXP\|\le\|X\|,
\end{IEEEeqnarray}
for every $X\in \mathcal{B}(H)$.

\begin{ppsn}\label{rho}
For each $R\ge 0$, the map $\rho_R$ is a unital, completely positive, $C^1$-contractive map.
\end{ppsn}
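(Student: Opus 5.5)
The plan is to verify the three properties separately. The first two come from the compression structure of $\rho_R$; the third uses the fact that $P_R^0$ commutes with the derivations $\delta_\mu$.

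\emph{Unital and completely positive.} Unitality is immediate, since $\rho_R(1)=P_R^0\pi_\tau(1)P_R^0=P_R^0$, which is the identity of $\mathcal B(H_R^0)$. For complete positivity, $\rho_R$ is the composition of the unital $\ast$-representation $\pi_\tau$ with the compression $X\mapsto P_R^0XP_R^0$. The compression has the form $X\mapsto V^*XV$ with $V:H_R^0\hookrightarrow L^2(\mathcal A_\Theta,\tau)$ the inclusion isometry, so it is completely positive by Stinespring's form. Hence $\rho_R$ is unital completely positive. Norm contractivity then follows from \eqref{P13}: $\|\rho_R(a)\|\le\|\pi_\tau(a)\|=\|a\|$.

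\emph{Lipschitz contractivity.} The key observation is that $P_R=P_R^0\otimes I$ is a spectral projection of $D$, so $P_R$ commutes with $D$. Similarly, $P_R^0$ commutes with each $\delta_\mu$, because each $\delta_\mu$ is diagonal in the basis $\{e_k\}$. Therefore, on the finite-dimensional space $H_R$, and using the observation after \eqref{eq:1} that $\pi_R(\rho_R(a))=P_RaP_R$,
\[
[D_R,\pi_R(\rho_R(a))]=P_RDP_R\,aP_R-P_RaP_R\,DP_R=P_R(Da-aD)P_R=P_R[D,a]P_R .
\]
To justify this, note that $P_R$ maps into the finite span of $e_k\otimes\xi$ with $\|k\|\le R$, which lies in the domain of $D$. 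Moreover $DP_R=P_RD$ there, so the middle projections can be absorbed; for $a\in\mathcal A_\Theta$ the commutator $[D,a]$ extends to the bounded operator \eqref{commutator}. Applying \eqref{P13} to $X=[D,a]$ then gives
\[
L_R(\rho_R(a))=\|P_R[D,a]P_R\|\le\|[D,a]\|.
\]

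\emph{Main obstacle.} The only delicate point is the domain issue in the Lipschitz step: one must make sure that the identity $P_RDP_RaP_R=P_RDaP_R$ holds, i.e.\ that $D$ may be moved past $P_R$ even though $aP_R\xi$ need not lie in $H_R$. I would handle this by computing on basis vectors. For $\eta=e_\nu\otimes\xi$ with $\nu\in\Lambda_R$, the vector $a\eta=\sum_k a_k\omega(k,\nu)e_{k+\nu}\otimes\xi$ has rapidly decaying coefficients, so it lies in the domain of $D$. Since $P_R$ is a spectral projection of the self-adjoint operator $D$, we have $P_RD\subseteq DP_R$, which gives $P_RD(a\eta)=DP_R(a\eta)$. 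This is exactly the identity needed.
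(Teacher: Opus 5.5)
Your proposal is correct and follows essentially the same route as the paper. Unitality, complete positivity and norm-contractivity come from $\rho_R$ being a compression of $\pi_\tau$, and Lipschitz-contractivity comes from $P_R$ commuting with $D$, which gives $[D_R,\pi_R\rho_R(a)]=P_R[D,a]P_R$; your domain justification via $P_RD\subseteq DP_R$ spells out a point the paper leaves implicit.
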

\begin{proof}
Unitality, complete positivity, and norm-contractivity ($\|\rho_R(a)\|\le\|\pi_{\tau}(a)\|=\|a\|$) follow from \Eref{P13}, since $\rho_R$ is a compression by the projection $P_R^{0}$ of the $*$-representation $\pi_{\tau}$. Since $P_R$ commutes with $D$, we get from \Eref{notn} the following:
$$[D_R\,,\,\pi_R\rho_R(a)]=D_RP_RaP_R-P_RaP_RD_R=P_R[D,a]P_R,$$
and so by \Eref{P13}, we obtain $\|[D_R,\pi_R\rho_R(a)]\|\le\|[D,a]\|$.
\end{proof}

To prove the same for the map $\sigma_R$, recall the following result.
\begin{thm}[Theorem 3.14, \cite{P}]\label{P14}
Let $\mathcal{B}$ be a $C^*$-algebra and $\phi:M_n(\mathbb{C})\to\mathcal{B}$ be a linear map. Suppose
$\{E_{i,j}\}_{i,j=1}^n$ denote the standard matrix units for $M_n(\mathbb{C})$.
The following are equivalent:
\begin{enumerate}[$(i)$]
\item $\phi$ is completely positive.
\item $\phi$ is $n$-positive.
\item $\bigl(\phi(E_{i,j})\bigr)_{i,j=1}^n$ is positive in $M_n(\mathcal{B}).$
\end{enumerate}
\end{thm}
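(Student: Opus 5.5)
The statement just before this point is Paulsen's Theorem 3.14, i.e.\ Choi's theorem. Here is how I would prove it.

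\textbf{The easy implications.} The implication $(i)\Rightarrow(ii)$ holds by definition. For $(ii)\Rightarrow(iii)$, I will use the matrix
$$E:=(E_{i,j})_{i,j=1}^n\in M_n(M_n(\mathbb{C})).$$
Let $\mathcal{E}=\sum_i e_i\otimes e_i\in\mathbb{C}^n\otimes\mathbb{C}^n$. Then $E$ is the rank-one operator $\mathcal{E}\mathcal{E}^*$, so $E=\tfrac1n E^*E$ and $E\ge0$. Applying the $n$-positive map $\phi_n=\phi\otimes\mathrm{id}_{M_n}$ gives $\bigl(\phi(E_{i,j})\bigr)_{i,j}=\phi_n(E)\ge0$ in $M_n(\mathcal{B})$.

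\textbf{The main implication $(iii)\Rightarrow(i)$.}

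1. Set $P:=(\phi(E_{i,j}))_{i,j}\ge0$ in $M_n(\mathcal{B})$. I will write $P=B^*B$ with $B=P^{1/2}=(b_{k,j})_{k,j}\in M_n(\mathcal{B})$. This gives
$$\phi(E_{i,j})=\sum_{k=1}^n b_{k,i}^*\,b_{k,j}.$$

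2. Represent $\mathcal{B}\subseteq\mathcal{B}(K)$ faithfully on a Hilbert space $K$. For each $k$, define $V_k:K\to\mathbb{C}^n\otimes K$ by $V_k\eta:=\sum_j e_j\otimes b_{k,j}\eta$. A direct computation on matrix units shows $V_k^*(E_{i,j}\otimes I_K)V_k=b_{k,i}^*b_{k,j}$. By linearity,
$$\phi(A)=\sum_{k}V_k^*(A\otimes I_K)V_k\qquad\text{for all }A\in M_n(\mathbb{C}).$$

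3. Each map $A\mapsto V_k^*(A\otimes I)V_k$ is the composition of a $*$-homomorphism with a compression, so it is completely positive. At level $m$, a positive $[A_{pq}]$ is sent to $(I_m\otimes V_k)^*\bigl([A_{pq}]\otimes I_K\bigr)(I_m\otimes V_k)\ge0$.

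4. A finite sum of completely positive maps is completely positive. Positivity in $M_m(\mathcal{B})$ agrees with positivity in $M_m(\mathcal{B}(K))$, since $C^*$-subalgebras are closed under the continuous functional calculus. Hence $\phi$ is completely positive.

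\textbf{Main obstacle.} The only step needing care is step 2: checking the identity $V_k^*(E_{i,j}\otimes I)V_k=b_{k,i}^*b_{k,j}$ with consistent index conventions. Everything else is formal. An alternative that avoids representing $\mathcal{B}$ would be a direct argument: for positive $[A_{pq}]\in M_m(M_n)$, write it as a sum of terms $[\alpha_p^*\alpha_q]$ with $\alpha_p\in M_n$, and conjugate $P$ by the resulting scalar matrices. I would fall back on this if the representation approach becomes cumbersome.
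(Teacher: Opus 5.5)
Your proof is correct. That includes the two points that need care: the index bookkeeping in $V_k^*(E_{i,j}\otimes I_K)V_k=b_{k,i}^*b_{k,j}$, and the comparison of positivity in $M_m(\mathcal{B})$ and $M_m(\mathcal{B}(K))$.

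The paper gives no proof of its own here. The statement is quoted from \cite{P}, and only (iii)$\Rightarrow$(i) is used, in Lemma~\ref{sigmaCP}.

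As far as I recall, the proof in \cite{P} is in substance your fallback. One reduces to positive elements of $M_m(M_n(\mathbb{C}))$ of the form $(\alpha_p^*\alpha_q)_{p,q}$ and expands the $\alpha_p$ in matrix units. Then $\phi_m\bigl((\alpha_p^*\alpha_q)_{p,q}\bigr)$ regroups as $\sum_r C_r^*PC_r$ with scalar matrices $C_r$, or, tested against vectors, as a sum of quadratic forms of $P$. That route is shorter, and in its algebraic form it needs no representation of $\mathcal{B}$.

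Your factorization $P=B^*B$ instead yields the Kraus/Stinespring form $\phi(A)=\sum_kV_k^*(A\otimes I_K)V_k$, which is more informative. Your computation only uses $P=B^*B$, not $B=P^{1/2}$, and that pays off in the paper's application. There the Choi matrix of $\sigma_R$ is shown to be $\tfrac1{N_R}YY^*$. With $B=N_R^{-1/2}Y^*$ you get a single operator $V\eta=N_R^{-1/2}\sum_j\varepsilon_j\otimes e_{\lambda_j}^*\eta$, where $\{\varepsilon_j\}$ is the standard basis of $\mathbb{C}^{N_R}$. It satisfies $V^*V=\tfrac1{N_R}\sum_je_{\lambda_j}e_{\lambda_j}^*=1$. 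Hence $\sigma_R(T)=V^*(T\otimes I_K)V$ is a compression by an isometry, and unitality, complete positivity and complete contractivity of $\sigma_R$ follow at once.
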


\begin{lmma}\label{sigmaCP}
For each $R\ge 0$, the map $\sigma_R:\mathcal B(H_R^0)\to \mathcal{A}_{\Theta}$ is unital and completely positive. Consequently, $\sigma_R$ is completely contractive.
\end{lmma}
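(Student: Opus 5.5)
Unitality is a direct computation. The identity of $\mathcal B(H_R^0)$ has matrix coefficients $T_{\lambda\nu}=\delta_{\lambda\nu}$, so
\[
\sigma_R(I)=\frac1{N_R}\sum_{\lambda\in\Lambda_R}e_\lambda e_\lambda^*=\frac1{N_R}\sum_{\lambda\in\Lambda_R}1=1,
\]
since each $e_\lambda$ is a unitary (a product of the unitaries $U_j$).

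For complete positivity I will use Theorem \ref{P14}, identifying $\mathcal B(H_R^0)\cong M_{N_R}(\mathbb C)$ through the orthonormal basis $\{e_\lambda:\lambda\in\Lambda_R\}$. The matrix unit $E_{\lambda\nu}$ has a single nonzero coefficient, equal to $1$ at position $(\lambda,\nu)$. Hence
\[
\sigma_R(E_{\lambda\nu})=\tfrac1{N_R}\,e_\lambda e_\nu^*.
\]
By Theorem \ref{P14}, it therefore suffices to show that the block matrix $\bigl(e_\lambda e_\nu^*\bigr)_{\lambda,\nu\in\Lambda_R}$ is positive in $M_{N_R}(\mathbb T_\Theta)$. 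Let $C$ be the column vector $(e_\lambda)_{\lambda\in\Lambda_R}$, viewed as an $N_R\times 1$ matrix over $\mathbb T_\Theta$. Then the block matrix equals $CC^*$, which is positive. Positivity in $M_{N_R}(\mathbb T_\Theta)$ is enough, since the values of $\sigma_R$ lie in the finite span of the $e_k$ inside $\mathcal A_\Theta\subseteq\mathbb T_\Theta$. So $\sigma_R$ is completely positive.

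Alternatively, one can write $\sigma_R(T)=\frac1{N_R}C^{*\,t}\,\cdots$ as a compression $W^*(T\otimes 1)W$, with $W$ the column of the $e_\nu^*$. The Choi-type argument above is cleaner, so I will use it.

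Finally, a unital completely positive map between unital $C^*$-algebras (or operator systems) satisfies $\|\sigma_R\|_{cb}=\|\sigma_R(1)\|=1$, by the standard consequence of Stinespring/Paulsen \cite{P}. Hence $\sigma_R$ is completely contractive.

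I do not expect a real obstacle here. The only point needing care is the bookkeeping in identifying the matrix units of $\mathcal B(H_R^0)$ with $E_{\lambda\nu}$, so that $\sigma_R(E_{\lambda\nu})=\frac1{N_R}e_\lambda e_\nu^*$ exactly, with no stray cocycle phase. This holds because $\sigma_R$ is defined directly by the coefficients $T_{\lambda\nu}$ and the products $e_\lambda e_\nu^*$.
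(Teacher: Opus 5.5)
Your proposal is correct and follows the paper's proof essentially verbatim: unitality comes from $e_\lambda e_\lambda^*=1$, and complete positivity comes from Choi's criterion (Theorem \ref{P14}) by writing $\bigl(e_\lambda e_\nu^*\bigr)_{\lambda,\nu}$ as $CC^*$. The paper pads your column $C$ with zeros to a square matrix $Y$. Your two extra remarks fill in points the paper leaves implicit: that positivity should be read in $M_{N_R}(\mathbb{T}_\Theta)$, since Theorem \ref{P14} needs a $C^*$-algebra target, and that $\|\sigma_R\|_{cb}=\|\sigma_R(1)\|=1$ gives complete contractivity.
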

\begin{proof}
To see $\sigma_R$ is unital, observe that 
$\sigma_R(I)=\frac{1}{N_R}\sum_\lambda e_\lambda e_\lambda^*
=\frac{1}{N_R}\sum_\lambda1=1$, since each $e_\lambda$ is unitary.

To prove complete positivity, we use $(iii)$ of Theorem \ref{P14}. Fix an enumeration $\Lambda_R=\{\lambda_1,\ldots,\lambda_{N_R}\}$ and let
$E_{ij}\in M_{N_R}(\mathbb C)$ be the standard matrix units. Since $M_{N_R}(\bbc)$ is $\ast$-isomorphic to $\mathcal{B}(H_R^0)$ via $E_{ij}\mapsto \widetilde{E}_{\lambda_i\lambda_j}$, where $\widetilde{E}_{\lambda_i\lambda_j}(e_{\lambda_k}):=\delta_{jk}e_{\lambda_i}$ with $\delta_{jk}$ being the Kronecker's delta function. Then $ \sigma_R(E_{ij})=\frac{1}{N_R}e_{\lambda_i}e_{\lambda_j}^*.$ Hence, $$\bigl(\sigma_R(E_{ij})\bigr)_{i,j=1}^{N_R} = \frac{1}{N_R}\bigl[e_{\lambda_i}e_{\lambda_j}^*\bigr]_{i,j=1}^{N_R}. $$
Let $$ Y= \begin{pmatrix} e_{\lambda_1} & 0 & \cdots & 0\\ e_{\lambda_2} & 0 & \cdots & 0\\ \vdots & \vdots & \ddots & \vdots\\ e_{\lambda_{N_R}} & 0 & \cdots & 0
\end{pmatrix} \in M_{N_R}(\mathcal{A}_{\Theta}). $$ Then $YY^*=\bigl[e_{\lambda_i}e_{\lambda_j}^*\bigr]_{i,j=1}^{N_R},$ and so $ \bigl(\sigma_R(E_{ij})\bigr)_{i,j=1}^{N_R} =\frac{1}{N_R}YY^*\ge0.$ Thus, by Theorem \ref{P14}, $\sigma_R$ is completely positive.
\end{proof}

\begin{rmrk}\rm
In \cite{LvS}, completely positivity of the map $\sigma_R$ in the case of classical tori was automatic since the range was a commutative $C^\ast$-algebra. In our case, we have exploited the fact that the domain is $M_n(\mathbb{C})$.
\end{rmrk}

\begin{ppsn}\label{sigmaC1}
For every $T\in E_R\subseteq\mathcal B(H_R^0)$, we have
$$[D,\sigma_R(T)\otimes I_{2^{\lfloor{n/2}\rfloor}}]=(\sigma_R\otimes\mathrm{id}_{M_{2^{\lfloor{n/2}\rfloor}}(\mathbb C)})\big([D_R,\pi_R(T)]\big),$$
where $D_R=P_RDP_R$ is regarded as acting on $\mathcal B(H_R^0)\otimes M_{2^{\lfloor{n/2}\rfloor}}(\mathbb C)\cong \mathcal B(H_R)$ ($\pi_R(T)$ by \Eref{notn} is $T\otimes I_{2^{\lfloor{n/2}\rfloor}}$), and $\sigma_R\otimes\mathrm{id}_{M_{2^{\lfloor{n/2}\rfloor}}(\mathbb{C})}$ is the second-matrix amplification of the map $\sigma_R$. Thus, we have
$$\|[D,\sigma_R(T)\otimes I_{2^{\lfloor{n/2}\rfloor}}]\|\le\|[D_R,\pi_R(T)]\|.$$
\end{ppsn}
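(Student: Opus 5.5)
The plan is to reduce the identity to a single scalar-level statement: $\sigma_R$ intertwines the truncated derivations $[L_\mu,\cdot\,]$ with the derivations $\delta_\mu$. Precisely, for every $T\in\mathcal B(H_R^0)$ and every $\mu=1,\ldots,n$,
\begin{equation*}
\delta_\mu\big(\sigma_R(T)\big)=\sigma_R\big([L_\mu,T]\big).
\end{equation*}
Granting this, we combine \Eref{commutator} with \Lref{deltabound1}. The left side becomes
\begin{equation*}
[D,\sigma_R(T)\otimes I]=\textstyle\sum_\mu \delta_\mu(\sigma_R(T))\otimes\sigma_\mu,
\end{equation*}
where $\sigma_R(T)$ is a finite sum of $e_k$'s and hence lies in $\mathcal A_\Theta$. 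The right side becomes
\begin{equation*}
(\sigma_R\otimes\mathrm{id})\big(\textstyle\sum_\mu [L_\mu,T]\otimes\sigma_\mu\big)=\textstyle\sum_\mu\sigma_R([L_\mu,T])\otimes\sigma_\mu.
\end{equation*}
These agree term by term. The identification $\mathcal B(H_R^0)\otimes M_N(\bbc)\cong\mathcal B(H_R)$ just makes sense of applying $\sigma_R\otimes\mathrm{id}$ to an element of the form $\sum_\mu X_\mu\otimes\sigma_\mu$. The map $\sigma_R\otimes\mathrm{id}$ acts linearly on elementary tensors, so there is no subtlety here.

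To prove the intertwining relation, it suffices by linearity to check it on matrix units $\widetilde E_{\lambda\nu}$. We have $\sigma_R(\widetilde E_{\lambda\nu})=\frac1{N_R}e_\lambda e_\nu^*=\frac1{N_R}\overline{\omega(\lambda-\nu,\nu)}\,e_{\lambda-\nu}$ by \Lref{cocycle}. By \Eref{derivations1} and $\delta_\mu=-i\widetilde\delta_\mu$, this is an eigenvector of $\delta_\mu$ with eigenvalue $\lambda_\mu-\nu_\mu$. On the other hand, $[L_\mu,\widetilde E_{\lambda\nu}]=(\lambda_\mu-\nu_\mu)\widetilde E_{\lambda\nu}$ by \Lref{deltabound1}. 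The two sides therefore coincide. Equivalently, $\delta_\mu$ is a derivation with $\delta_\mu(e_\lambda)=\lambda_\mu e_\lambda$ and $\delta_\mu(e_\nu^*)=-\nu_\mu e_\nu^*$.

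For the norm inequality, we use that $\sigma_R$ is completely positive and unital by \Lref{sigmaCP}, hence completely contractive. Its amplification $\sigma_R\otimes\mathrm{id}_{M_N(\bbc)}$ is therefore a contraction from $M_N(\mathcal B(H_R^0))$ to $M_N(\mathbb T_\Theta)$. The norm on $M_N(\mathbb T_\Theta)$ coincides with the operator norm on $L^2(\mathcal A_\Theta,\tau)\otimes\bbc^N$, because $\pi_\tau$ is faithful and its amplification is isometric. The inequality then follows directly from the identity.

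The main point to watch is this bookkeeping of identifications. We must make sure that the operator $[D,\sigma_R(T)\otimes I]$, a priori an unbounded commutator, is the bounded element $\sum_\mu\pi_\tau(\delta_\mu(\sigma_R(T)))\otimes\sigma_\mu$ given by \Eref{commutator}. We must also make sure that the norm used on the range of $\sigma_R\otimes\mathrm{id}$ is the $C^*$-norm of $M_N(\mathbb T_\Theta)$, so that complete contractivity applies. The algebraic core, the eigenvector check on matrix units, is routine.
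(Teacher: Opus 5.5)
Your proposal is correct and follows essentially the same route as the paper. The paper likewise reduces the identity to $\delta_\mu(\sigma_R(T))=\sigma_R([L_\mu,T])$, verified through $\delta_\mu(e_\lambda e_\nu^*)=(\lambda_\mu-\nu_\mu)e_\lambda e_\nu^*$ via the cocycle formula of Lemma~\ref{cocycle}, and then gets the norm bound from the complete contractivity of $\sigma_R$ (Lemma~\ref{sigmaCP}); your remark that the norm on $M_N(\mathbb{T}_\Theta)$ agrees with the operator norm under $\pi_\tau\otimes\mathrm{id}$ is a harmless clarification that the paper leaves implicit.
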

\begin{proof}
From \Lref{deltabound1}, we get $$[D_R,\pi_R(T)]=[D_R,T\otimes I_{2^{\lfloor{n/2}\rfloor}}]=\textstyle\sum_{j=1}^n[L_j,T]\otimes\sigma_j\in \mathcal B(H_R^0)\otimes M_{2^{\lfloor{n/2}\rfloor}}(\mathbb{C}),$$ and $[L_\mu,T]_{\lambda\nu}=(\lambda_\mu-\nu_\mu)T_{\lambda\nu}$ for $\mu=1,\ldots,n$. The second-amplification of the map $\sigma_R:\mathcal B(H_R^0)\to \mathcal{A}_{\Theta}$, i.e., $\sigma_R\otimes\mathrm{id}_{M_{2^{\lfloor{n/2}\rfloor}}(\mathbb C)}: \mathcal B(H_R^0)\otimes M_{2^{\lfloor{n/2}\rfloor}}(\mathbb{C})\to \mathcal{A}_{\Theta}\otimes M_{2^{\lfloor{n/2}\rfloor}}(\mathbb{C})$, gives us the following:
\begin{IEEEeqnarray*}{lCl}
(\sigma_R\otimes\mathrm{id}_{M_{2^{\lfloor{n/2}\rfloor}}(\mathbb C)})([D_R,T\otimes I_{2^{\lfloor{n/2}\rfloor}}]) &=&  (\sigma_R\otimes\mathrm{id}_{M_{2^{\lfloor{n/2}\rfloor}}})\big(\textstyle\sum_{j=1}^n[L_j,T]\otimes\sigma_j\big)\\
&=& \frac1{N_R}\textstyle\sum_{j=1}^n\sum_{\lambda,\nu}(\lambda_j-\nu_j)T_{\lambda\nu}e_\lambda e_\nu^*\otimes\sigma_j.
\end{IEEEeqnarray*}
As $\sigma_R(T)=\frac1{N_R}\sum_{\lambda,\nu}T_{\lambda\nu}e_\lambda e_\nu^*$, observe that $$\delta_\mu(e_\lambda e_\nu^*)=\delta_\mu(\overline{\omega(\lambda-\nu,\nu)}e_{\la-\nu})=(\la_\mu-\nu_\mu)\overline{\omega(\lambda-\nu,\nu)}(e_{\lambda-\nu})= (\la_\mu-\nu_\mu)e_\lambda e_\nu^*$$ for $\mu=1,\ldots,n$. Hence, by linearity we get
$$\delta_\mu(\sigma_R(T))=\frac1{N_R}\sum_{\lambda,\nu}T_{\lambda\nu}\,\delta_\mu(e_\lambda e_\nu^*)=\frac1{N_R}\sum_{\lambda,\nu}(\lambda_\mu-\nu_\mu)T_{\lambda\nu}\,e_\lambda e_\nu^*,\qquad \text{for } \mu=1,\ldots,n.$$
Therefore, $$(\sigma_R\otimes\mathrm{id}_{M_{2^{\lfloor{n/2}\rfloor}}(\mathbb C)})([D_R,T\otimes I_{2^{\lfloor{n/2}\rfloor}}])=\sum_{j=1}^n\delta_j(\sigma_R(T))\otimes \sigma_j.$$
On the other hand, $$[D,\sigma_R(T)\otimes I_{2^{\lfloor{n/2}\rfloor}}]=\sum_{j=1}^n\delta_j(\sigma_R(T))\otimes\sigma_j,$$ which proves the first equation.
 
For the second equation, as we know from \Lref{sigmaCP} that $\sigma_R:\mathcal B(H_R^0)\to \mathcal{A}_{\Theta}$ is completely contractive, we get that $\|[D,\sigma_R(T)\otimes I_{2^{\lfloor{n/2}\rfloor}}]\|=\|(\sigma_R\otimes\mathrm{id}_{M_{2^{\lfloor{n/2}\rfloor}}(\mathbb C)})([D_R,T\otimes I_{2^{\lfloor{n/2}\rfloor}}])\|\le\|[D_R,T\otimes I_{2^{\lfloor{n/2}\rfloor}}]\|$.
\end{proof}

\section{Gromov--Hausdorff convergence}
\subsection{Action of \texorpdfstring{$\mathbb{R}^n$}{R2} and Good kernel}
%\rm[$\mathbb{R}^n\overset{\alpha}{\curvearrowright} \mathcal{A}_{\Theta}$]

For the noncommutative torus $\mathcal{A}_{\Theta}$, we have the action of $\mathbb{T}^n=\mathbb{R}^n/(2\pi\bbz)^n$ on the $C^*$-algebra $\mathbb{T}_{\Theta}$ for which $\mathcal{A}_{\Theta}$ is the subalgebra of smooth elements. The covering map $\mathbb{R}^n\to \mathbb{T}^n$ given by $(s_1,\ldots,s_n)\mapsto(e^{is_1},\ldots,e^{is_n})$ lets us pull the $\mathbb{T}^n$-action back to $\mathbb{R}^n$, and we have the following fact:

\begin{lmma}\label{gauge}
For $x\in\mathbb{R}^n$, consider the map $\alpha_x(e_k):=e^{ik\cdot x}e_k$ for $e_k\in\mathcal{A}_{\Theta}$. Further, it extends to a $\ast$-automorphism of $\mathcal{A}_{\Theta}$ and $\|\alpha_x(a)\|=\|a\|$ for all $a\in \mathcal{A}_{\Theta}$. 
\end{lmma}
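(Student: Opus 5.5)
The plan is to reduce everything to the universal property of $\mathbb{T}_\Theta$ and then restrict to $\mathcal{A}_\Theta$. Fix $x\in\mathbb{R}^n$ and put $z=(e^{ix_1},\ldots,e^{ix_n})\in\mathbb{T}^n$. The unitaries $V_j:=e^{ix_j}U_j\in\mathbb{T}_\Theta$ satisfy
\[
V_kV_\ell=e^{i(x_k+x_\ell)}U_kU_\ell=e^{2\pi i\Theta_{k\ell}}V_\ell V_k .
\]
So by universality there is a unital $\ast$-homomorphism $\alpha_x:\mathbb{T}_\Theta\to\mathbb{T}_\Theta$ with $\alpha_x(U_j)=e^{ix_j}U_j$. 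The same construction for $-x$ gives an inverse, since $\alpha_{-x}\alpha_x$ fixes the generators. Hence $\alpha_x$ is a $\ast$-automorphism, and it is isometric because injective $\ast$-homomorphisms between $C^*$-algebras are isometric. This is precisely the $\mathbb{T}^n$-action described in Section 2.1, evaluated at $z$.

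Next, I would check the formula on monomials. Since $e_k=U_1^{k_1}\cdots U_n^{k_n}$ and $\alpha_x$ is multiplicative, we get $\alpha_x(e_k)=\prod_j e^{ik_jx_j}U_j^{k_j}=e^{ik\cdot x}e_k$. For negative powers, use $\alpha_x(U_j^{-1})=\alpha_x(U_j)^*=e^{-ix_j}U_j^{-1}$.

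Finally, I would show that $\alpha_x$ preserves $\mathcal{A}_\Theta$. Take $a=\sum_r a_re_r$ with $(a_r)\in\mathbb{S}(\bbz^n)$. Because $\|e_r\|=1$, the series converges absolutely in norm, and continuity of $\alpha_x$ gives
\[
\alpha_x(a)=\sum_r e^{ir\cdot x}a_re_r .
\]
The coefficients $(e^{ir\cdot x}a_r)$ have the same moduli as $(a_r)$, so they are still of rapid decay, and therefore $\alpha_x(a)\in\mathcal{A}_\Theta$. The same argument for $\alpha_{-x}$ shows that $\alpha_x$ restricts to a $\ast$-automorphism of $\mathcal{A}_\Theta$, with $\|\alpha_x(a)\|=\|a\|$ inherited from the $C^*$-level isometry.

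The only delicate point is that "extends" should be read as going through the universal $C^*$-algebra. One should not try to verify multiplicativity directly on series. If preferred, multiplicativity can also be checked by hand using Lemma \ref{cocycle}:
\[
\alpha_x(e_k)\alpha_x(e_\ell)=e^{i(k+\ell)\cdot x}\omega(k,\ell)e_{k+\ell}=\alpha_x(e_ke_\ell).
\]
Together with $\alpha_x(e_k^*)=\alpha_x(e_k)^*$, this gives a $\ast$-automorphism of the dense $\ast$-algebra spanned by the $e_k$. Norm preservation is then the step needing the universal property, or alternatively implementation by the unitary $W_x e_k=e^{ik\cdot x}e_k$ on $L^2(\mathcal{A}_\Theta,\tau)$. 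Indeed, $W_x\pi_\tau(a)W_x^*=\pi_\tau(\alpha_x(a))$ holds on basis vectors, since
\[
W_x\,a\,W_x^*e_\ell=e^{-i\ell\cdot x}\sum_k a_k\,\omega(k,\ell)\,e^{i(k+\ell)\cdot x}e_{k+\ell}=\alpha_x(a)e_\ell,
\]
and this conjugation gives isometry in the faithful representation $\pi_\tau$ directly.
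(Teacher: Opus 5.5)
Your proof is correct and takes the route the paper relies on implicitly. The paper gives no separate proof: it obtains $\alpha_x$ by pulling back the $\mathbb{T}^n$-action on $\mathbb{T}_\Theta$ from Section 2.1 along the covering map $\mathbb{R}^n\to\mathbb{T}^n$, and your universality construction (with $\alpha_{-x}$ as inverse), together with your observation that $(e^{ir\cdot x}a_r)$ is still of rapid decay, is exactly the justification of that fact; your alternative implementation by $W_x$ on $L^2(\mathcal{A}_\Theta,\tau)$ is also valid.
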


\begin{lmma}\label{gaugeC1}
For fixed $a\in \mathcal{A}_{\Theta}$ and $x=(x_1,\ldots,x_n)\in\mathbb R^n$, the curve $g(t):=\alpha_{tx}(a)$ is norm-$C^1$ (in fact, $C^\infty$), and
\[
g'(t)=i\textstyle\sum_{\mu=1}^nx_\mu\,\alpha_{tx}(\delta_\mu( a)).
\]
\end{lmma}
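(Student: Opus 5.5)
The plan is to reduce everything to the monomials $e_k$, where the curve is explicit, and then pass to a general $a=\sum_k a_ke_k$ by term-by-term differentiation using \Lref{P3}. For a single monomial, \Lref{gauge} gives $\alpha_{tx}(a_ke_k)=e^{itk\cdot x}a_ke_k$. This is a scalar $C^\infty$ function of $t$ times a fixed vector. Its derivative is $i(k\cdot x)e^{itk\cdot x}a_ke_k$, and differentiating again just brings down further factors of $i(k\cdot x)$.

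Next compare this with the claimed formula termwise. By \Eref{derivations1} and $\delta_\mu=-i\widetilde{\delta}_\mu$, we have $\delta_\mu(e_k)=k_\mu e_k$. Hence
$$i\textstyle\sum_\mu x_\mu\,\alpha_{tx}(\delta_\mu(a_ke_k))=i(k\cdot x)e^{itk\cdot x}a_ke_k,$$
which agrees with the termwise derivative. So for $f_k(t):=e^{itk\cdot x}a_ke_k$ on $[0,1]$ (or on any compact interval, after rescaling), we know $f_k$ is $C^1$ and
$$f_k'(t)=i\textstyle\sum_\mu x_\mu\alpha_{tx}(\delta_\mu(a_ke_k)).$$

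The main step is verifying the hypotheses of \Lref{P3} in the Banach space $\mathbb{T}_\Theta$. Since each $e_k$ is unitary, $\|f_k'(t)\|\le|a_k|\,|k\cdot x|\le\|x\|\,\|k\|\,|a_k|$. Because $(a_k)\in\mathbb{S}(\bbz^n)$, the sum $\sum_k\|k\|\,|a_k|$ converges. The Weierstrass $M$-test (\Lref{P8}) then gives uniform convergence of $\sum_kf_k'$. Similarly, $\sum_k f_k(0)=\sum_k a_ke_k=a$ converges absolutely in norm. \Lref{P3} then yields that $g(t)=\sum_kf_k(t)=\alpha_{tx}(a)$ is $C^1$ with $g'(t)=\sum_kf_k'(t)$.

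To conclude, we identify $\sum_kf_k'(t)$ with $i\sum_\mu x_\mu\alpha_{tx}(\delta_\mu(a))$. The element $\delta_\mu(a)=\sum_k k_\mu a_ke_k$ lies in $\mathcal{A}_\Theta$ and its series converges absolutely in norm. Since $\alpha_{tx}$ is isometric and linear by \Lref{gauge}, it commutes with this norm-convergent sum, so the two expressions agree.

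For $C^\infty$, observe that $g'$ is a finite linear combination of curves of the same form $t\mapsto\alpha_{tx}(b)$ with $b=\delta_\mu(a)\in\mathcal{A}_\Theta$. Induction then gives derivatives of all orders. The only care needed is that rapid decay supplies the polynomial weights $\|k\|^m$ at every stage. Finally, \Lref{P3} is stated on $[0,1]$; to get the result on all of $\mathbb{R}$ we apply it on each interval $[-T,T]$, either by an affine reparametrization or by replacing $x$ with a scalar multiple of $x$.
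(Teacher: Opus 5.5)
Your proposal is correct and follows essentially the same route as the paper. Both arguments differentiate $f_k(t)=a_ke^{itk\cdot x}e_k$ term by term, use the $M$-test bound $\|f_k'(t)\|\le\|x\|\,\|k\|\,|a_k|$ via \Lref{P8} together with \Lref{P3}, and then regroup the result into $i\sum_\mu x_\mu\alpha_{tx}(\delta_\mu(a))$; your extra remarks (working in the complete space $\mathbb{T}_\Theta$, passing from $[0,1]$ to all of $\mathbb{R}$ by reparametrization, and the inductive $C^\infty$ argument) fill in points the paper leaves implicit.
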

\begin{proof}
Write $g(t)=\sum_kf_k(t)$, where $f_k(t):=a_ke^{itk\cdot x}e_k\in \mathcal{A}_{\Theta}$. Each $f_k$ is $C^1$ with $f_k'(t)=i(k\cdot x)a_ke^{itk\cdot x}e_k$. Then $g(t_0)=\sum_kf_k(t_0)$ converges by using $\|e_k\|=1$ and rapid decay of $(a_k)$. For the derivative series, $\|f_k'(t)\|=|k\cdot x||a_k|\le\|x\|\|k\||a_k|$, and $\sum_k\|k\||a_k|<\infty$ by rapid decay, thus giving a bound independent of $t$. By Lemma \ref{P8}, $\sum_kf_k'(t)$ converges uniformly on $\mathbb{R}$. Therefore, we can apply Lemma \ref{P3} to get the following:
\begin{IEEEeqnarray*}{lCl}
g'(t) &=& \textstyle\sum_kf_k'(t)=\textstyle\sum_{k}a_k\,i(\sum_{\mu=1}^nk_\mu x_\mu) e^{itk\cdot x} e_k\\
&=& i\textstyle\sum_{k}\left(\sum_{\mu=1}^nx_\mu e^{itk\cdot x}k_\mu a_ke_k\right)\\
&=& i\Big(\textstyle\sum_{\mu=1}^nx_\mu\textstyle\sum_{k}e^{itk\cdot x} k_\mu a_ke_k\Big)\\
&=& i\Big(\textstyle\sum_{\mu=1}^nx_\mu\alpha_{tx}\big(\textstyle\sum_k k_\mu a_ke_k\big)\Big)\\
&=& i\Big(\textstyle\sum_{\mu}^nx_\mu\alpha_{tx}(\delta_\mu(a))\Big)\\
&=& i\textstyle\sum_{\mu=1}^{n}x_{\mu} \alpha_{tx}(\delta_{\mu}(a)),
\end{IEEEeqnarray*}
which concludes the proof.
\end{proof}

\begin{lmma}\label{FTC}
For all $a\in \mathcal{A}_\Theta$ and $x\in \mathbb{R}^n,$
\[
a-\alpha_x(a)=-i\int_0^1\textstyle\sum_{\mu=1}^nx_\mu\,\alpha_{tx}(\delta_\mu a)\,dt.
\]
\end{lmma}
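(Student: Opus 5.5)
The plan is to apply the fundamental theorem of calculus for Banach-space-valued curves, Lemma \ref{P5}, to the curve $g(t):=\alpha_{tx}(a)$ from Lemma \ref{gaugeC1}. The curve lives in the $C^*$-algebra $\mathbb{T}_\Theta$, which is a Banach space and contains $\mathcal{A}_\Theta$. We have $g(0)=\alpha_0(a)=a$, since $\alpha_0(e_k)=e_k$ for all $k$. We also have $g(1)=\alpha_x(a)$. So Lemma \ref{P5} would give
\[
\alpha_x(a)-a=g(1)-g(0)=\int_0^1 g'(t)\,dt .
\]
Substituting the formula $g'(t)=i\sum_{\mu=1}^n x_\mu\,\alpha_{tx}(\delta_\mu a)$ from Lemma \ref{gaugeC1} and multiplying by $-1$ then yields the claimed identity. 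Linearity of the Bochner integral lets us pull out the finite sum and the scalars $ix_\mu$ if we want to.

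The one hypothesis of Lemma \ref{P5} that needs checking is norm-continuity of $g'$ on $[0,1]$. Since $g'$ is a finite linear combination of the curves $t\mapsto \alpha_{tx}(b)$ with $b=\delta_\mu a\in\mathcal{A}_\Theta$, it suffices to show that $t\mapsto\alpha_{tx}(b)$ is norm-continuous for every $b=\sum_k b_ke_k\in\mathcal{A}_\Theta$. For this I would estimate
\[
\|\alpha_{tx}(b)-\alpha_{sx}(b)\|\le\sum_k|b_k|\,|e^{itk\cdot x}-e^{isk\cdot x}|\le |t-s|\,\|x\|\sum_k\|k\||b_k| ,
\]
which is finite by rapid decay. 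Alternatively, continuity follows directly from Lemma \ref{gaugeC1} applied to $b$, since $C^1$ curves are continuous. Lemma \ref{P4} then also guarantees that $g'$ is Bochner integrable on the compact interval $[0,1]$.

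There is no serious obstacle here. The only delicate points are bookkeeping. First, the identification of $g'(t)\in\mathcal{B}(\mathbb{R},X)$ with the vector $g'(t)(1)$ must be made, and this is exactly how Lemma \ref{gaugeC1} states the derivative. Second, the integral is taken in the ambient Banach space $\mathbb{T}_\Theta$. The identity is nonetheless an equality of elements of $\mathcal{A}_\Theta$, because the left-hand side lies in $\mathcal{A}_\Theta$.
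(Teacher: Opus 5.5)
Your proposal is correct and matches the paper's proof: apply Lemma \ref{P5} to $g(t)=\alpha_{tx}(a)$, with norm-continuity of $g'$ obtained from Lemma \ref{gaugeC1} applied to $\delta_\mu a$. Your direct Lipschitz estimate is an equally valid alternative for that step. You also note that the integral should be taken in the Banach space $\mathbb{T}_\Theta$ rather than in the non-complete $\mathcal{A}_\Theta$, a small point the paper leaves implicit.
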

\begin{proof}
By Lemma \ref{gaugeC1}, $g(t)=\alpha_{tx}(a)$ is $C^1$ and its derivative $g'(t)=i\sum_\mu x_\mu\alpha_{tx}(\delta_\mu a)$ is norm-continuous in $t$ (for each $\mu=1,\ldots,n$, the function $t\mapsto\alpha_{tx}(\delta_\mu( a))$ is given by a uniformly convergent Fourier series in $t$, by the same argument as Lemma \ref{gaugeC1} applied to $\delta_\mu a\in \mathcal{A}_{\Theta}$ in place of $a$). Applying Lemma \ref{P5}, we get $g(1)-g(0)=\int_0^1g'(t)\,dt$, which shows that $\alpha_x(a)-a=\int_0^1g'(t)\,dt$.
\end{proof}

We record a corollary of Lemma \ref{spinorcompression} that will be helpful.

\begin{crlre}\label{deltabound}
For $a\in \mathcal{A}_{\Theta}$ and $\mu=1,\ldots,n$, $\|\delta_\mu(a)\|\le\|[D,a]\|$.
\end{crlre}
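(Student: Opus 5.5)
This is a direct application of Lemma \ref{spinorcompression} with $K=L^2(\mathcal{A}_\Theta,\tau)$.

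First, by \Eref{commutator}, for $a\in\mathcal{A}_\Theta$ the commutator has the form
\[
[D,a]=\textstyle\sum_{\mu=1}^n\pi_\tau(\delta_\mu(a))\otimes\sigma_\mu\in\mathcal B\big(L^2(\mathcal{A}_\Theta,\tau)\otimes\bbc^{2^{\lfloor n/2\rfloor}}\big).
\]
This is exactly $X=\sum_j X_j\otimes\sigma_j$ with $X_j=\pi_\tau(\delta_j(a))$. Each $X_j$ is bounded because $\delta_j(a)\in\mathcal{A}_\Theta$: the rapid decay of the coefficients $(a_k)$ is preserved under multiplication by $k_j$. Hence $[D,a]$, initially defined on the dense domain $\mathcal{A}_\Theta\otimes\bbc^N$, extends to a bounded operator given by this formula.

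Second, we check the hypotheses on the $\sigma_j$. The Clifford matrices are self-adjoint (as in the standard construction of $D$) and satisfy $\sigma_\mu^2=I$ and $\sigma_\mu\sigma_\nu+\sigma_\nu\sigma_\mu=0$ for $\mu\neq\nu$. We also have $n\ge 2$.

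Lemma \ref{spinorcompression} then gives $\|\pi_\tau(\delta_\mu(a))\|\le\|[D,a]\|$ for each $\mu$.

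Finally, $\pi_\tau$ is the faithful left-regular (GNS) representation of the $C^*$-algebra $\mathbb{T}_\Theta$ coming from the faithful trace $\tau$. A faithful $\ast$-representation is isometric, so $\|\pi_\tau(\delta_\mu(a))\|=\|\delta_\mu(a)\|$, where the latter is the $C^*$-norm. This yields the claim.

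The only point needing care is the identification of the bounded extension of $[D,a]$ with the formula above, so that Lemma \ref{spinorcompression} applies to a genuinely bounded $X$. Since the paper has already recorded \Eref{commutator} as an identity of bounded operators, I expect no real obstacle.
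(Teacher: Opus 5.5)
Your proposal is correct and is the paper's proof: apply Lemma~\ref{spinorcompression} with $K=L^2(\mathcal{A}_\Theta,\tau)$ to $X=[D,a]=\sum_\mu\pi_\tau(\delta_\mu(a))\otimes\sigma_\mu$, then use that $\pi_\tau$ is faithful, hence isometric. Your extra checks (boundedness of the $X_j$, self-adjointness of the $\sigma_j$, and $n\ge 2$) simply spell out hypotheses that the paper leaves implicit.
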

\begin{proof}
Applying Lemma \ref{spinorcompression} with $K=L^2(\mathcal{A}_\Theta,\tau)$. We have $$X=[D,a]=\textstyle\sum_{\mu=1}^n\pi_\tau(\delta_\mu(a))\otimes\sigma_\mu.$$ This gives us $\|\delta_\mu(a)\|=\|\pi_\tau(\delta_\mu (a))\|\le\|[D,a]\|$.
\end{proof}

\begin{ppsn}\label{orbitbound}
Let $\tilde{x}\in \mathbb{T}^n=\mathbb{R}^n/(2\pi\bbz)^n$ and $x\in \mathbb{R}^n$ be any representative of $\tilde{x}$. For the $\mathbb{T}^n$-action $\alpha_{\tilde{x}}(e_k):=\alpha_{x}(e_k)$, where $\alpha_x$ is as in \Lref{gauge}, on $\mathcal{A}_{\Theta}$, we have 
\[
\|a-\alpha_{\tilde{x}}(a)\|\le\sqrt{n}\,\,{l}(\tilde{x})\,\|[D,a]\|,
\]
where ${l}(\tilde{x}):=\inf_{k\in \mathbb{Z}^n} \|x+2\pi k\|$.
\end{ppsn}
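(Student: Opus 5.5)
The plan is to reduce to a single real representative and apply \Lref{FTC}. First observe that $\alpha_{\tilde{x}}$ is well defined: if $x'=x+2\pi k$ with $k\in\bbz^n$, then $e^{ij\cdot x'}=e^{ij\cdot x}e^{2\pi i j\cdot k}=e^{ij\cdot x}$ for every $j\in\bbz^n$. Hence $\alpha_{x'}=\alpha_x$ on every $e_j$, and therefore on all of $\mathcal{A}_\Theta$, since elements are norm-convergent Fourier series and each $\alpha_x$ is isometric by \Lref{gauge}. So $\|a-\alpha_{\tilde{x}}(a)\|=\|a-\alpha_{x'}(a)\|$ for every representative $x'$. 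It therefore suffices to prove
\[
\|a-\alpha_x(a)\|\le\sqrt{n}\,\|x\|\,\|[D,a]\|\qquad\text{for every }x\in\mathbb{R}^n,
\]
and then take the infimum over representatives $x+2\pi k$, which gives $l(\tilde{x})$.

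For fixed $x$, \Lref{FTC} gives
\[
a-\alpha_x(a)=-i\int_0^1\textstyle\sum_{\mu=1}^n x_\mu\,\alpha_{tx}(\delta_\mu a)\,dt,
\]
where the integrand is norm-continuous, hence Bochner integrable by \Lref{P4}. By \Lref{P6},
\[
\|a-\alpha_x(a)\|\le\int_0^1\Big\|\textstyle\sum_\mu x_\mu\alpha_{tx}(\delta_\mu a)\Big\|\,dt\le\int_0^1\textstyle\sum_\mu|x_\mu|\,\|\alpha_{tx}(\delta_\mu a)\|\,dt .
\]
Since $\alpha_{tx}$ is isometric, $\|\alpha_{tx}(\delta_\mu a)\|=\|\delta_\mu a\|$. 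Corollary \ref{deltabound} then bounds this by $\|[D,a]\|$. The integrand is thus at most $\sum_\mu|x_\mu|\,\|[D,a]\|$, and Cauchy--Schwarz gives $\sum_\mu|x_\mu|\le\sqrt{n}\,\|x\|$, which yields the claimed bound.

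The only delicate point is the well-definedness and isometry of $\alpha_x$ on the whole of $\mathcal{A}_\Theta$ (or on $\mathbb{T}_\Theta$), which we take from \Lref{gauge}; everything else is a direct chain of the earlier lemmas. A sharper constant (namely $1$ in place of $\sqrt{n}$) might be obtainable by estimating $\sum_\mu x_\mu\delta_\mu a$ directly through a Clifford compression, but this is not needed for the statement.
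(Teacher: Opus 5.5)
Your proof is correct and is essentially the paper's own argument: well-definedness from $\alpha_x=\alpha_{x+2\pi k}$, then \Lref{FTC}, the Bochner norm estimate \Lref{P6}, the isometry of $\alpha_{tx}$ from \Lref{gauge}, \Yref{deltabound}, the bound $\sum_\mu|x_\mu|\le\sqrt{n}\,\|x\|$, and finally the infimum over the representatives $x+2\pi k$. Your closing remark is also right: compressing $[D,a]$ onto a $+1$ eigenvector of $\sum_\mu (x_\mu/\|x\|)\sigma_\mu$, as in \Lref{spinorcompression}, gives $\|\sum_\mu x_\mu\delta_\mu a\|\le\|x\|\,\|[D,a]\|$ and hence the constant $1$, though this is not needed for the statement.
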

\begin{proof}
Since $\alpha_x=\alpha_{x+2\pi k}$ for any $k\in\bbz^n$, we have well-definedness of the $\mathbb{T}^n$-action. Now, applying \Lref{FTC}, \Lref{P6}, \Lref{gauge}, and \Yref{deltabound} in that order, we obtain the following:
\begin{IEEEeqnarray*}{lCl}
\|a-\alpha_{\tilde{x}}(a)\|=\|a-\alpha_x(a)\| &=& \Big\|-i\int_0^1\textstyle\sum_{\mu=1}^nx_\mu\,\alpha_{tx}(\delta_\mu a)\,dt\Big\|\\
&\le& \int_0^1\big\|\textstyle\sum_\mu x_\mu\alpha_{tx}(\delta_\mu a)\big\|dt\\
&\le& \textstyle\sum_\mu\int_0^1|x_\mu|\,\|\alpha_{tx}(\delta_\mu a)\|dt\\
&=& \textstyle\sum_\mu|x_\mu|\,\|\delta_\mu a\|\\
&\le& (|x_1|+\ldots+|x_n|)\|[D,a]\|\\
&\leq& \sqrt{n}\,\|x\|\,\|[D,a]\|.
\end{IEEEeqnarray*}
Taking infimum over all the representatives of $\tilde{x}$ in the last inequality proves the claim.
\end{proof}

Recall that $\Lambda_R:=\{k\in\mathbb Z^n:\|k\|\le R\}$ for $R\geq 0$.
\begin{dfn}
For any $\tilde{x}\in\mathbb T^n=\mathbb{R}^n/(2\pi\bbz)^n$, define
$$K_R(\tilde{x}):=\frac1{N_R}\Big|\sum_{\lambda\in\Lambda_R}e^{i\lambda\,\cdot\, x}\Big|^2,$$
where $x\in\mathbb{R}^n$ is any respresentative of $\tilde{x}$. Well-definedness follows from the definition of $\Lambda_R$.
\end{dfn}

\begin{lmma}\label{kernel}
We have $K_R(\tilde{x})\ge 0,\,\int_{\mathbb T^n}K_R(\tilde{x})\,d\tilde{x}=1$, and $K_R(\tilde{x})=\sum_{k}m_R(k)e^{ik\cdot x}$, where $m_R(k)$ is as defined in \Lref{mR(k)}.
\end{lmma}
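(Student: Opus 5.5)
Nonnegativity is immediate, since $K_R(\tilde x)$ is $\frac1{N_R}$ times the squared modulus of a complex number. Well-definedness on $\mathbb T^n$ also needs no further comment: every $\lambda\in\Lambda_R$ lies in $\mathbb Z^n$, so $e^{i\lambda\cdot(x+2\pi k)}=e^{i\lambda\cdot x}$. The main work is the Fourier expansion; the normalization will then follow from it.

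For the expansion, I will write $|z|^2=z\bar z$ and expand the finite sum:
$$K_R(\tilde x)=\frac1{N_R}\sum_{\lambda,\nu\in\Lambda_R}e^{i\lambda\cdot x}\,\overline{e^{i\nu\cdot x}}=\frac1{N_R}\sum_{\lambda,\nu\in\Lambda_R}e^{i(\lambda-\nu)\cdot x}.$$
I then regroup the pairs $(\lambda,\nu)$ according to the value $k=\lambda-\nu$. By the definition in \Lref{mR(k)}, exactly $N_L(\Lambda_R,k)$ pairs have difference $k$, so
$$K_R(\tilde x)=\sum_{k}\frac{N_L(\Lambda_R,k)}{N_R}\,e^{ik\cdot x}=\sum_k m_R(k)e^{ik\cdot x}.$$
This is a finite sum: by \Lref{mR(k)}$(ii)$, $m_R(k)=0$ unless $k\in\Lambda_R-\Lambda_R$. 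This is the same bookkeeping as in \Pref{compositions}.

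For the normalization, I take $d\tilde x$ to be the normalized Haar measure on $\mathbb T^n$, i.e. $(2\pi)^{-n}$ times Lebesgue measure on $[0,2\pi]^n$; this is the normalization implicit in the claim $\int K_R=1$. I will integrate the finite trigonometric sum term by term, using $\int_{\mathbb T^n}e^{ik\cdot x}\,d\tilde x=\delta_{k,0}$. This gives $\int_{\mathbb T^n}K_R(\tilde x)\,d\tilde x=m_R(0)$. Finally, $N_L(\Lambda_R,0)=\#\{(\lambda,\lambda):\lambda\in\Lambda_R\}=N_R$, so $m_R(0)=1$.

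No step is a genuine obstacle. The only point to watch is the measure normalization: with unnormalized Lebesgue measure the integral would be $(2\pi)^n$ rather than $1$, so the convention should be stated explicitly in the proof.
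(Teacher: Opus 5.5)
Your proof is correct and follows the same route as the paper: expand the squared modulus, regroup the pairs by $k=\lambda-\nu$ to obtain $\sum_k m_R(k)e^{ik\cdot x}$, and use the normalized Haar measure on $\mathbb{T}^n$. You also spell out the step the paper leaves implicit, namely that term-by-term integration gives $\int_{\mathbb{T}^n}K_R(\tilde{x})\,d\tilde{x}=m_R(0)=N_R/N_R=1$.
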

\begin{proof}
To observe $\int_{\mathbb T^n}K_R(\tilde{x})\,d\tilde{x}=1$, use the normalized Haar probability measure on $\mathbb{T}^n=\mathbb{R}^n/(2\pi\bbz)^n$. We have $K_R(\tilde{x})=\frac1{N_R}\sum_{\lambda,\nu\in\Lambda_R}e^{i(\lambda-\nu)\cdot x}$, and grouping the same powers by $k=\lambda-\nu$ gives $K_R(\tilde{x})=\sum_km_R(k)e^{ik\cdot x}$.
\end{proof}

\begin{lmma}\label{average}
We have for any $a\in\mathcal{A}_{\Theta}$
\[
F_{m_R}(a)=\int_{\mathbb T^n}K_R(\tilde{x})\,\alpha_{\tilde{x}}(a)\,d\tilde{x},
\]
where $F_{m_R}$ is as defined in \Pref{eq:3.3}.
\end{lmma}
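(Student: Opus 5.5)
We first make sure the right-hand side is a well-defined Bochner integral in the $C^*$-norm. The map $\tilde{x}\mapsto K_R(\tilde{x})\,\alpha_{\tilde{x}}(a)$ from $\mathbb{T}^n$ into $\mathbb{T}_{\Theta}$ is norm-continuous. Indeed, $K_R$ is a trigonometric polynomial. Moreover, $\alpha_{\tilde{x}}(a)=\sum_k a_k e^{ik\cdot x}e_k$ converges uniformly in $\tilde{x}$ by Lemma \ref{P8}, since $\|a_k e^{ik\cdot x}e_k\|=|a_k|$ and $(a_k)$ is summable by rapid decay. Each partial sum is continuous, so the limit is continuous. By Lemma \ref{P4}, applied to the compact metric space $\mathbb{T}^n$ with normalized Haar measure, the integrand is Bochner integrable.

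Next we compute the integral termwise. Write $f_F(\tilde{x}):=\sum_{k\in F}a_kK_R(\tilde{x})e^{ik\cdot x}e_k$ for finite $F\subset\bbz^n$. These partial sums converge uniformly on $\mathbb{T}^n$ to $K_R(\tilde{x})\alpha_{\tilde{x}}(a)$. This is because $\|K_R\|_\infty\le N_R$, which is finite for fixed $R$, and the tails satisfy $\sum_{k\notin F}|a_k|\to 0$. Corollary \ref{P7}, applied along an exhausting sequence of finite sets, then gives
\[
\int_{\mathbb T^n}K_R(\tilde{x})\,\alpha_{\tilde{x}}(a)\,d\tilde{x}=\lim_F\sum_{k\in F}a_k\Big(\int_{\mathbb T^n}K_R(\tilde{x})e^{ik\cdot x}\,d\tilde{x}\Big)e_k .
\]
Here pulling the fixed vector $e_k$ out of the integral of a scalar continuous function times $e_k$ is immediate from the definition of the Bochner integral. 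Alternatively, one can invoke Theorem \ref{P7''} with the bounded operator of right multiplication.

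It remains to evaluate the scalar integral. By Lemma \ref{kernel}, $K_R(\tilde{x})=\sum_{\ell}m_R(\ell)e^{i\ell\cdot x}$, which is a finite sum by Lemma \ref{mR(k)}$(ii)$. Orthogonality of characters on $\mathbb{T}^n$ then gives
\[
\int_{\mathbb T^n}K_R(\tilde{x})e^{ik\cdot x}\,d\tilde{x}=m_R(-k)=m_R(k),
\]
using Lemma \ref{mR(k)}$(iii)$. Hence the integral equals $\sum_k m_R(k)a_ke_k=F_{m_R}(a)$, where the series converges in norm because $|m_R(k)|\le 1$.

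The only delicate point is the interchange of the infinite sum with the integral. It is handled by the uniform convergence argument above, which works because $K_R$ is bounded for fixed $R$.
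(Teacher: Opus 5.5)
Your proof is correct and follows essentially the same route as the paper: you approximate $\alpha_{\tilde{x}}(a)$ uniformly by finite Fourier partial sums, pass the limit through the integral via Corollary \ref{P7}, and evaluate the scalar integrals $\int_{\mathbb{T}^n} K_R(\tilde{x})e^{ik\cdot x}\,d\tilde{x}=m_R(-k)=m_R(k)$ using Lemma \ref{kernel} and Lemma \ref{mR(k)}$(iii)$. Your separate continuity check of the integrand, and your choice to integrate in the completion $\mathbb{T}_{\Theta}$, are harmless refinements of the same argument.
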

\begin{proof}
Let $S_k({\tilde{x}}):=\sum_{\|\ell\|\le k}a_\ell e^{i\ell\cdot x}e_\ell$. By the rapid decay of $(a_\ell)$ and $\|e_\ell\|=1$, we have $\|S_k({\tilde{x}})-\alpha_{\tilde{x}}(a)\|\le\sum_{\|\ell\|>n}|a_\ell|$, which tends to $0$ uniformly in ${\tilde{x}}$. Each $S_k$ is continuous map from $\mathbb T^n$ to $\mathcal{A}_{\Theta}$, and hence Bochner integrable by Lemma \ref{P4}. As $S_k\to \alpha_{\star}(a)$ uniformly and $K_R$ is bounded on the compact space $\mathbb T^n$, we have $K_R\cdot S_k\to K_R\cdot\alpha_{\star}(a)$ uniformly. By Corollary \ref{P7},
$$\int_{\mathbb T^n}K_R({\tilde{x}})S_k({\tilde{x}})\,d{\tilde{x}}\longrightarrow\int_{\mathbb T^n}K_R({\tilde{x}})\alpha_{\tilde{x}}(a)\,d{\tilde{x}}\quad\mathrm{as}\,\,k\to \infty.$$
On other hand, by \Lref{kernel} we get $$\int_{\mathbb{T}^n} K_R({\tilde{x}})S_k({\tilde{x}})\,d{\tilde{x}}=\sum_{\|\ell\|\le k}\left(\int_{\mathbb{T}^n} K_R({\tilde{x}})e^{i\ell\cdot x}d{\tilde{x}}\right)a_\ell e_\ell=\sum_{\|\ell\|\le k}m_R\left(-\ell\right)a_\ell e_\ell.$$ 
Since $F_{m_R}(a)=\sum_{\ell}m_R\left(\ell\right)a_\ell e_\ell$, using $m_R(-\ell)=m_R(\ell)$ and letting $k\to\infty$, we conclude the proof.
\end{proof}

The main results of this subsection are the following.
\begin{ppsn}\label{aside}
For any $a\in\mathcal{A}_{\Theta}$, we have
\[
a-\sigma_R\circ\rho_R(a)=\int_{\mathbb T^n}K_R({\tilde{x}})\big(a-\alpha_{\tilde{x}}(a)\big)\,d{\tilde{x}},
\] and
\[
\|a-\sigma_R\circ\rho_R(a)\|\le\gamma_R\,\|[D,a]\|,\qquad\gamma_R:=\sqrt{n}\int_{\mathbb T^n}K_R({\tilde{x}})l({\tilde{x}})\,d{\tilde{x}}.
\]
\end{ppsn}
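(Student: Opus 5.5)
The identity follows from \Pref{compositions} and \Lref{average}, and the bound follows by integrating \Pref{orbitbound} against $K_R$.

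\emph{Step 1: the identity.} By \Pref{compositions}, $\sigma_R\circ\rho_R(a)=F_{m_R}(a)$. By \Lref{average}, $F_{m_R}(a)=\int_{\mathbb T^n}K_R(\tilde x)\alpha_{\tilde x}(a)\,d\tilde x$. \Lref{kernel} gives $\int_{\mathbb T^n}K_R=1$, so we may write
\[
a=\int_{\mathbb T^n}K_R(\tilde x)\,a\,d\tilde x,
\]
which is a Bochner integral of the continuous function $\tilde x\mapsto K_R(\tilde x)a$ (\Lref{P4}). Bochner integration is linear, so subtracting yields
\[
a-\sigma_R\circ\rho_R(a)=\int_{\mathbb T^n}K_R(\tilde x)\big(a-\alpha_{\tilde x}(a)\big)\,d\tilde x.
\]
The only technical point is that the integrand is Bochner integrable. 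This holds because $\tilde x\mapsto\alpha_{\tilde x}(a)$ is norm-continuous: it is the uniform limit of the continuous partial sums $S_k$ used in the proof of \Lref{average}. Hence \Lref{P4} applies.

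\emph{Step 2: the bound.} Take norms and apply \Lref{P6} to get
\[
\|a-\sigma_R\circ\rho_R(a)\|\le\int_{\mathbb T^n}K_R(\tilde x)\,\|a-\alpha_{\tilde x}(a)\|\,d\tilde x,
\]
using $K_R\ge 0$ from \Lref{kernel}. Then \Pref{orbitbound} bounds the integrand pointwise by $\sqrt n\,K_R(\tilde x)\,l(\tilde x)\,\|[D,a]\|$, and integrating gives exactly $\gamma_R\|[D,a]\|$.

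Neither step is a real obstacle, since everything is assembled from earlier results. The one point to check is that $\tilde x\mapsto K_R(\tilde x)\,l(\tilde x)$ is integrable. It is: $l$ is continuous and bounded by $\sqrt n\,\pi$ on $\mathbb T^n$, and $K_R$ is a trigonometric polynomial. Showing $\gamma_R\to0$ is not part of this statement.
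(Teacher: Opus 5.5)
Your proof is correct and follows the paper's argument essentially verbatim: combine Proposition~\ref{compositions} with Lemma~\ref{average}, write $a=\int_{\mathbb T^n}K_R(\tilde x)\,a\,d\tilde x$ using $\int K_R=1$ and subtract, then apply Lemma~\ref{P6} and Proposition~\ref{orbitbound}. Your checks on the Bochner integrability of the integrands and the integrability of $K_R\,l$ fill in details the paper leaves implicit, and they are sound.
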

\begin{proof}
By \Pref{compositions} and Lemma \ref{average}, we have $\sigma_R\rho_R(a)=F_{m_R}(a)=\int K_R({\tilde{x}})\alpha_{\tilde{x}}(a)\,d{\tilde{x}}$. Since $\int K_R=1$, we can take $a=\int K_R({\tilde{x}})a\,d{\tilde{x}}$, and this gives the first equation. For the second equation, using \Lref{P6} and \Pref{orbitbound} we have
\begin{equation*}
\|a-\sigma_R\rho_R(a)\|\le\int_{\mathbb T^n} K_R({\tilde{x}})\|a-\alpha_{\tilde{x}}(a)\|\,d{\tilde{x}}\le\sqrt{n}\Big(\int_{\mathbb T^n} K_R({\tilde{x}})l({\tilde{x}})\,d{\tilde{x}}\Big)\|[D,a]\|.
\end{equation*}
\end{proof}

\begin{ppsn}\label{concentration}
For every $\varepsilon>0$, let $B_\varepsilon(0)=\{{\tilde{x}}\in \mathbb{T}^n: l({\tilde{x}})<\varepsilon\}$, then $$\displaystyle\int_{\mathbb T^n\setminus B_\varepsilon(0)}K_R({\tilde{x}})\,d{\tilde{x}}\longrightarrow0$$ as $R\to\infty$. Consequently,
\[
\gamma_R:=\sqrt{n}\int_{\mathbb T^n}K_R({\tilde{x}})l({\tilde{x}})\,d{\tilde{x}}\longrightarrow 0\quad\mathrm{ as }\,\,R\to\infty.
\]
\end{ppsn}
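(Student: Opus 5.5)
The plan has two stages. First show that $K_R$ concentrates at $0$. Then deduce $\gamma_R\to0$ by splitting the integral defining $\gamma_R$ over $B_\varepsilon(0)$ and its complement.

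\emph{Stage 1: concentration.} Since $K_R$ is a probability density, the quickest route is to show that $\int_{\mathbb T^n}K_R(\tilde x)\,\phi(\tilde x)\,d\tilde x\to\phi(0)$ for some continuous $\phi\ge0$ on $\mathbb T^n$ with $\phi(0)=0$ and $\phi>0$ off $0$. I would take
$$\phi(\tilde x):=\sum_{\mu=1}^n\bigl(1-\cos x_\mu\bigr),$$
a trigonometric polynomial. By \Lref{kernel}, $K_R(\tilde x)=\sum_k m_R(k)e^{ik\cdot x}$, a finite sum. Orthogonality of characters with respect to normalized Haar measure then gives
$$\int_{\mathbb T^n}K_R\,\phi\,d\tilde x=\sum_{\mu=1}^n\Bigl(m_R(0)-\tfrac12 m_R(\nu_\mu)-\tfrac12 m_R(-\nu_\mu)\Bigr)=\sum_{\mu=1}^n\bigl(1-m_R(\nu_\mu)\bigr).$$
Here $\nu_\mu$ is the $\mu$-th standard basis vector, and we used $m_R(0)=1$ and $m_R(-k)=m_R(k)$ from \Lref{mR(k)}$(iii)$. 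By \Lref{mR(k)}$(v)$ this tends to $0$ as $R\to\infty$.

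On the compact set $\mathbb T^n\setminus B_\varepsilon(0)$, the function $\phi$ is continuous and strictly positive. Indeed, $\phi(\tilde x)=0$ forces every $x_\mu\in2\pi\mathbb Z$, i.e.\ $\tilde x=0$. Hence $\phi\ge c_\varepsilon>0$ there. Since $K_R\ge0$ and $\phi\ge0$, we get
$$\int_{\mathbb T^n\setminus B_\varepsilon(0)}K_R\,d\tilde x\le c_\varepsilon^{-1}\int_{\mathbb T^n}K_R\,\phi\,d\tilde x\longrightarrow0.$$

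\emph{Stage 2: $\gamma_R\to0$.} The function $l$ is bounded on $\mathbb T^n$; choosing the representative in $[-\pi,\pi]^n$ gives $l\le\sqrt n\,\pi$. Given $\varepsilon>0$, we split
$$\gamma_R/\sqrt n\le \varepsilon\int_{B_\varepsilon(0)}K_R+\sqrt n\,\pi\int_{\mathbb T^n\setminus B_\varepsilon(0)}K_R\le\varepsilon+\sqrt n\,\pi\int_{\mathbb T^n\setminus B_\varepsilon(0)}K_R,$$
using $K_R\ge0$ and $\int K_R=1$. Taking $\limsup_{R\to\infty}$ and applying Stage 1 gives $\limsup\gamma_R\le\sqrt n\,\varepsilon$. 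Since $\varepsilon>0$ is arbitrary, $\gamma_R\to0$.

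\emph{Expected obstacle.} The only substantive input is \Lref{mR(k)}$(v)$, i.e.\ $m_R(\nu_\mu)\to1$, which is quoted from \cite{LvS}. It can also be seen directly: $1-m_R(\nu_\mu)$ counts lattice points of $\Lambda_R$ whose translate by $\nu_\mu$ leaves the ball. These form a boundary layer of size $O(R^{n-1})$, while $N_R\sim cR^n$. The remaining checks are routine: measurability of $l$ (it is continuous, being a distance to a lattice), and that the orthogonality computation is legitimate because $K_R$ is a finite trigonometric sum.
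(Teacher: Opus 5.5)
Your proof is correct. Stage~2 is exactly the paper's $\varepsilon$-splitting, so the difference lies in Stage~1.

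The paper takes the continuous cutoff $f=\min\{1,l/\varepsilon\}$ and approximates it uniformly within $\eta$ by a trigonometric polynomial $p$ via Stone--Weierstrass. It then combines $\int K_R\,p\to p(0)$ (from $m_R(k)\to 1$ on the support of $p$) with $K_R\ge0$, $\int K_R=1$ and $|p(0)|\le\eta$ to get $\int K_R f\to0$.

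You replace this with a single explicit nonnegative trigonometric polynomial $\phi=\sum_\mu(1-\cos x_\mu)$ vanishing only at $0$, plus a Markov-type inequality. This is the classical Korovkin/Fej\'er shortcut for positive kernels, and it buys three things:
\begin{enumerate}[$(i)$]
\item It avoids Stone--Weierstrass.
\item It needs $m_R(k)\to1$ only at the fixed frequencies $k=\pm\nu_\mu$, whereas the paper needs it on a support that depends on $\varepsilon$ and $\eta$.
\item It is quantitative.
\end{enumerate}

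For $(iii)$: since $1-\cos t\ge 2t^2/\pi^2$ for $|t|\le\pi$, you have $l^2\le\frac{\pi^2}{2}\phi$ on $\mathbb T^n$. Cauchy--Schwarz for the probability measure $K_R\,d\tilde x$ then gives
\[
\gamma_R\le\sqrt n\Bigl(\int_{\mathbb T^n}K_R\,l^2\,d\tilde x\Bigr)^{1/2}\le\pi\sqrt{\tfrac n2}\,\Bigl(\sum_{\mu=1}^n\bigl(1-m_R(\nu_\mu)\bigr)\Bigr)^{1/2}.
\]
This bypasses the $\varepsilon$-splitting altogether. Combined with your boundary-layer count $1-m_R(\nu_\mu)=O(1/R)$, it yields the rate $\gamma_R=O(R^{-1/2})$.

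The paper's route, in turn, proves directly the full good-kernel property $\int K_R f\to f(0)$ for all continuous $f$. That property also follows from your concentration estimate by uniform continuity.
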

\begin{proof}
Fix any $\varepsilon,\eta>0$. Choose a continuous function $f:\mathbb T^n\to[0,1]$ with $f(0)=0$ and $f\equiv1$ outside $B_\varepsilon(0)$ (e.g. $f(\tilde{x})=\min\left\{1, l(\tilde{x})/\varepsilon\right\}$). By the Stone--Weierstrass theorem, choose a trigonometric polynomial $p({\tilde{x}})=\sum_{\|k\|\le M}c_ke^{ik\cdot x}$ with $\|f-p\|_\infty<\eta$. Since $K_R\ge0$ and $\int K_R=1$, we have $\big|\int_{\mathbb T^n} K_R(f-p)\big|\le \|f-p\|_\infty \le\eta$. Also, $$\int_{\mathbb T^n} K_R\,p=\sum_{\|k\|\le M}c_km_R(k)\longrightarrow\sum_{\|k\|\le M}c_k=p(0)\quad\mathrm{ as }\,\,R\to \infty,$$ by Lemma \ref{mR(k)}. Observe that as $f(0)=0$, we have $|p(0)|=|f(0)-p(0)|\le\|f-p\|_\infty\le\eta$. Hence, $$\limsup_{R\to \infty}\left|\int_{\mathbb T^n} K_Rf\right|\le\limsup_{R\to \infty}\left(\left|\int_{\mathbb T^n} K_Rp\right|+\left|\int_{\mathbb T^n} K_R(f-p)\right|\right)\le2\eta.$$ Since $\eta>0$ was arbitrary, we get $\int_{\mathbb T^n} K_Rf\to0$. Since $f\ge\mathbf\chi_{\mathbb T^n\setminus B_\varepsilon(0)}$ and $K_R\ge0$, we have $\int_{\mathbb T^n\setminus B_\varepsilon(0)}K_R\le\int K_Rf$ which tends to $0$ as $R\to\infty$. This proves the first claim.
\smallskip

For the second part, let $M:=\sup_{\mathbb T^n}l({\tilde{x}})<\infty$. For $\varepsilon>0$, we get
$$\gamma_R=\sqrt{n}\int_{B_\varepsilon(0)}K_R({\tilde{x}})l({\tilde{x}})\,d{\tilde{x}}+\sqrt{n}\int_{\mathbb T^n\setminus B_\varepsilon(0)}K_R({\tilde{x}})l({\tilde{x}})\,d{\tilde{x}}\le\sqrt{n}\,\varepsilon+\sqrt{n}\,M\int_{\mathbb T^n\setminus B_\varepsilon(0)}K_R,$$
using $\int K_R=1$ and $l({\tilde{x}})< \epsilon$ for the first term. By Lemma \ref{concentration}, $\limsup_R\gamma_R\le\sqrt{n}\varepsilon$. As $\varepsilon>0$ is arbitrary, we get that $\gamma_R\to 0$ as $R\to\infty$.
\end{proof}

\begin{rmrk}\rm
Readers may visit subsection $3.3$ of \cite{LvS}. $K_R$ was the inverse Fourier transform of $\{m_{R}(k)\}_{k\in \mathbb{Z}^n}$; using this and few other facts from harmonic analysis, authors in \cite{LvS} proved that $K_R$ is a good kernel in Proposition $3.8$.
\end{rmrk}

\subsection{\texorpdfstring{$C^1$}{c1}-approximate order isomorphism and Convergence}

We begin by fixing the following notation.
%\rm[$\mathbb{R}^n\overset{\beta}{\curvearrowright} M_{N_R}(\mathbb{C})$]
\begin{notation}\label{notn2}
For $x=(x_1,\ldots,x_n)\in\mathbb R^n$, let $$W_x:=e^{i(\sum_{\mu=1}^nx_\mu L_\mu)}=\sum_{m\geq 0}\frac{\big(i(\sum_{\mu=1}^nx_\mu L_\mu)\big)^m}{m!}\in \mathcal{B}(H_R^0)$$
where $L_\mu$'s are as defined in \Eref{to refer in 4.2}. Using the fact that $L_\mu$'s commute with each other, we get that $W_xe_\lambda=e^{i\lambda\cdot x}e_\lambda$. We also see  that $W_x=W_{x+2\pi k}$, because  $W_xe_{\la}=e^{i\la\cdot x}e_\la=e^{i\la\cdot (x+2\pi k)}e_\la=W_{x+2\pi k}e_\la$, for all $\la\in \Lambda_R$.
\end{notation}
\medskip

Define $\beta_x(T):=W_xTW_x^*$ for $T\in \mathcal{B}(H_R^0)$ and $x\in\mathbb{R}^n$. Clearly $\|\beta_x(T)\|=\|T\|$ and $\beta_x(\rho_R(a))=\rho_R(\alpha_x(a))$, for all $a\in \mathcal{A}_{\Theta}$ and $x\in \mathbb{R}^n$.

\begin{lmma}\label{betacommutator}
For every $\,T\in \mathcal{B}(H_R^0)$ and $x\in \mathbb{R}^n$, the map $\mathbb{R}\to \mathcal{B}(H_R^0)$ given by $ t\longmapsto\beta_{tx}(T)$
is norm-$C^1$ and
\begin{equation}
\frac{d}{dt}\beta_{tx}(T)
=i\,\beta_{tx}\bigl(\textstyle\sum_{\mu=1}^nx_\mu[L_\mu,T]\bigr).
\label{eq:7.3}
\end{equation}
\end{lmma}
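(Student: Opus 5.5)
Since $H_R^0$ is finite-dimensional, the map $t\mapsto\beta_{tx}(T)$ is a map into the finite-dimensional normed space $\mathcal B(H_R^0)\cong M_{N_R}(\bbc)$. Here all norms are equivalent, and norm-differentiability can be checked entrywise in the basis $\{e_\lambda\}_{\lambda\in\Lambda_R}$. The plan is therefore to compute the matrix coefficients of $\beta_{tx}(T)$ explicitly using Notation \ref{notn2}, differentiate them, and then recognise the result via Lemma \ref{deltabound1}.

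First, by Notation \ref{notn2}, $W_{tx}e_\nu=e^{it\nu\cdot x}e_\nu$. Since $W_{tx}$ is diagonal and unitary, $W_{tx}^*e_\nu=e^{-it\nu\cdot x}e_\nu$. Hence
\[
\big(\beta_{tx}(T)\big)_{\lambda\nu}=\langle e_\lambda,W_{tx}TW_{tx}^*e_\nu\rangle=e^{it(\lambda-\nu)\cdot x}\,T_{\lambda\nu},\qquad \lambda,\nu\in\Lambda_R.
\]
Each entry is a $C^\infty$ scalar function of $t$. Because there are finitely many entries, the map $t\mapsto\beta_{tx}(T)$ is norm-$C^\infty$, in particular norm-$C^1$, and its derivative is computed entrywise:
\[
\frac{d}{dt}\big(\beta_{tx}(T)\big)_{\lambda\nu}=i\,e^{it(\lambda-\nu)\cdot x}\sum_{\mu=1}^n x_\mu(\lambda_\mu-\nu_\mu)T_{\lambda\nu}.
\]

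Second, by Lemma \ref{deltabound1}, $[L_\mu,T]_{\lambda\nu}=(\lambda_\mu-\nu_\mu)T_{\lambda\nu}$. Applying the first computation with $S:=\sum_\mu x_\mu[L_\mu,T]$ in place of $T$ gives
\[
\big(\beta_{tx}(S)\big)_{\lambda\nu}=e^{it(\lambda-\nu)\cdot x}\sum_\mu x_\mu(\lambda_\mu-\nu_\mu)T_{\lambda\nu}.
\]
Multiplying by $i$ yields exactly the derivative entries above, which proves \eqref{eq:7.3}.

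Alternatively, one can argue without coordinates. The exponential series gives $\frac{d}{dt}W_{tx}=iA W_{tx}$, where $A:=\sum_\mu x_\mu L_\mu$ commutes with $W_{tx}$. The product rule then gives $\frac{d}{dt}\beta_{tx}(T)=iW_{tx}[A,T]W_{tx}^*$. There is no genuine obstacle in either route; the only point needing care is justifying norm-differentiability, which finite-dimensionality handles at once.
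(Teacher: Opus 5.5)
Your proof is correct, but your primary argument takes a different route from the paper's. You work in the eigenbasis $\{e_\lambda\}_{\lambda\in\Lambda_R}$: you compute $(\beta_{tx}(T))_{\lambda\nu}=e^{it(\lambda-\nu)\cdot x}T_{\lambda\nu}$ and differentiate entrywise. The paper instead argues coordinate-free. It writes $W_{tx}=e^{it(x\cdot L)}$ with $x\cdot L=\sum_\mu x_\mu L_\mu$, applies the product rule to $W_{tx}TW_{tx}^*$, and uses that $W_{tx}$ commutes with $x\cdot L$. That is exactly your ``alternative'' paragraph.

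Your entrywise route gives an honest justification of the norm-$C^1$ (indeed $C^\infty$) claim via finite-dimensionality. The paper only computes the derivative and does not spell out why $t\mapsto iW_{tx}[x\cdot L,T]W_{tx}^*$ is norm-continuous. Your route also produces the entry formula that the paper records separately in Lemma \ref{betax}. The paper's route is shorter and never uses a diagonalizing basis, so it works verbatim for $e^{itA}$ with any bounded self-adjoint $A$.

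One small precision: Lemma \ref{deltabound1} is stated for $T\in E_R$, but you use its entry formula for $[L_\mu,T]$ with arbitrary $T\in\mathcal B(H_R^0)$. This is harmless, since the formula only uses that each $L_\mu$ is diagonal, but you should say so.
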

\begin{proof}
Define $x\cdot L:=\sum_{\mu=1}^nx_\mu L_\mu$, which is self-adjoint, and observe that $W_{tx}=e^{it(x\cdot L)}$ commutes with $x\cdot L$. The product rule gives us the following:
\begin{IEEEeqnarray*}{lCl}
\frac{d}{dt}\big(W_{tx}TW_{tx}^*\big) &=& i(x\cdot L)W_{tx}TW_{tx}^*-W_{tx}TW_{tx}^*\,i(x\cdot L)\\
&=& iW_{tx}\big[x\cdot L\,,\,T\big]W_{tx}^*\\
&=& i\beta_{tx}\big([x\cdot L\,,\,T]\big),
\end{IEEEeqnarray*}
which concludes the proof.
\end{proof}

For ${\tilde{x}}\in \mathbb{T}^n$, with a representative $x\in \mathbb{R}^n$, define the map $\beta_{\tilde{x}}(T):=\beta_x(T)$, for all $T\in \mathcal{B}(H_R^0)$. Then, we have an well-defined action $\beta$ of $\mathbb{T}^n$ on $\mathcal{B}(H_R^0)$. Observe that  $\|\beta_{\tilde{x}}(T)\|=\|T\|$ and $\beta_{\tilde{x}}(\rho_R(a))=\rho_R(\alpha_{\tilde{x}}(a))$, for all $a\in \mathcal{A}_{\Theta}$ and ${\tilde{x}}\in \mathbb{T}^n$.

\begin{ppsn}\label{Tside}
For all $T\in E_R\subseteq\mathcal{B}(H_R^0)$ and $x\in \mathbb{R}^n$, we have
\[
T-\beta_x(T)=-i\int_0^1\beta_{tx}\big(\textstyle\sum_{\mu=1}^nx_\mu[L_\mu,T]\big)\,dt\,,\quad\|T-\beta_x(T)\|\le\sqrt{n}\,\|x\|\,\|[D_R,T\otimes I_{2^{\lfloor{n/2}\rfloor}}]\|.
\]
Consequently, for all $\tilde{x}\in \mathbb{T}^n,$
\[
\|T-\beta_{\tilde{x}}(T)\|\leq \sqrt{n}\  l({\tilde{x}}) \|[D_R,T\otimes I_{2^{\lfloor{n/2}\rfloor}}]\|.
\]
\end{ppsn}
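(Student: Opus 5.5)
The argument mirrors the proofs of \Lref{FTC} and \Pref{orbitbound}, now on the finite-dimensional side. Fix $T\in E_R$ and $x\in\mathbb R^n$, and set $g(t):=\beta_{tx}(T)$.

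\emph{Integral formula.} By \Lref{betacommutator}, $g$ is norm-$C^1$ with
\[
g'(t)=i\,\beta_{tx}\bigl(\textstyle\sum_\mu x_\mu[L_\mu,T]\bigr).
\]
This derivative is norm-continuous in $t$. Indeed, $t\mapsto W_{tx}$ is a matrix exponential in the finite-dimensional space $\mathcal B(H_R^0)$, hence smooth, so $t\mapsto W_{tx}SW_{tx}^*$ is continuous for every fixed $S$. \Lref{P5} then gives
\[
\beta_x(T)-T=g(1)-g(0)=\int_0^1 g'(t)\,dt .
\]
Multiplying by $-1$ yields the first displayed identity. Alternatively, one can differentiate entrywise: $(\beta_{tx}(T))_{\lambda\nu}=e^{it(\lambda-\nu)\cdot x}T_{\lambda\nu}$, and the scalar fundamental theorem of calculus already suffices.

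\emph{Norm bound for $x\in\mathbb R^n$.} Apply \Lref{P6} to the integral formula. Since each $\beta_{tx}$ is isometric,
\[
\|T-\beta_x(T)\|\le\int_0^1\Bigl\|\textstyle\sum_\mu x_\mu[L_\mu,T]\Bigr\|\,dt\le\sum_\mu|x_\mu|\,\|[L_\mu,T]\|.
\]
By \Lref{deltabound1}, each $\|[L_\mu,T]\|$ is at most $\|[D_R,T\otimes I_{2^{\lfloor n/2\rfloor}}]\|$. Cauchy--Schwarz gives $\sum_\mu|x_\mu|\le\sqrt n\,\|x\|$, which yields the second inequality.

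\emph{Bound on $\mathbb T^n$.} By Notation \ref{notn2}, $W_x=W_{x+2\pi k}$, so $\beta_{\tilde x}(T)=\beta_{x+2\pi k}(T)$ for every $k\in\mathbb Z^n$. Apply the bound to each representative $x+2\pi k$ and take the infimum over $k$; this replaces $\|x\|$ by $l(\tilde x)$.

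There is no real obstacle. The only point needing care is justifying the vector-valued fundamental theorem of calculus, that is, the continuity of $g'$, and finite-dimensionality makes this immediate.
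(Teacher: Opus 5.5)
Your proposal is correct and follows the paper's proof essentially step for step. Both arguments use the fundamental theorem of calculus for the $C^1$ curve from \Lref{betacommutator}, then \Lref{P6} together with the isometry of $\beta_{tx}$, \Lref{deltabound1}, the estimate $\sum_\mu|x_\mu|\le\sqrt{n}\,\|x\|$, and finally an infimum over representatives of $\tilde{x}$; your explicit check that $g'$ is norm-continuous, via finite-dimensionality or entrywise, is a small detail the paper leaves implicit.
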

\begin{proof}
By Lemma \ref{betacommutator}, the map $t\longmapsto\beta_{tx}(T)$ is norm-$C^1$. Hence, the fundamental theorem of calculus for Bochner
integrals gives us
$$\beta_x(T)-T=\int_0^1\frac{d}{dt}\beta_{tx}(T)\,dt.$$
Thus,
$$T-\beta_x(T)=-i\int_0^1\beta_{tx}\bigl(\textstyle\sum_{\mu=1}^nx_\mu[L_\mu,T]\bigr)\,dt.$$
As $\|\beta_x(T)\|=\|T\|$, using \Lref{P6} we get the following:
\begin{IEEEeqnarray*}{lCl}
\|T-\beta_x(T)\| &\le& \int_0^1\big\|\textstyle\sum_{\mu=1}^nx_\mu[L_\mu,T]\big\|\,dt\\
&\le& \sum_{\mu=1}^n|x_\mu|\|[L_\mu,T]\|.
\end{IEEEeqnarray*}
By \Lref{deltabound1}, we have $\|[L_\mu,T]\|\le\|[D_R,T\otimes I_{2^{\lfloor{n/2}\rfloor}}]\|$ for $\mu=1,\ldots,n.$
Hence,
\begin{IEEEeqnarray*}{lCl}
\|T-\beta_x(T)\| &\le& (\textstyle\sum_{\mu=1}^n|x_\mu|)\,\|[D_R,T\otimes I_{2^{\lfloor{n/2}\rfloor}}]\|\\
&\le& \sqrt{n}\,\|x\|\,\|[D_R,T\otimes I_{2^{\lfloor{n/2}\rfloor}}]\|.
\end{IEEEeqnarray*}
As $\beta_{\tilde{x}}(T):= \beta_x(T)$, taking infimum over all the representatives of $\tilde{x}$ gives us the desired inequality.
\end{proof}

\begin{lmma}\label{betax}
For $T\in E_R\subseteq\mathcal{B}(H_R^0)$ and $\tilde{x}\in \mathbb{T}^n,$ we have
$(\beta_{\tilde{x}}(T))_{\lambda\nu}=e^{i(\lambda-\nu)\cdot x}T_{\lambda\nu}$, and
\[
\int_{\mathbb T^n}K_R({\tilde{x}})\beta_{\tilde{x}}(T)\,d{\tilde{x}}=S_{m_R}(T),
\]
where $S_{m_R}(T)$ is as defined in \eqref{eq:3.4}.
\end{lmma}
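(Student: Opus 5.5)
The plan is to compute matrix coefficients first and then integrate entrywise, which is legitimate because $\mathcal{B}(H_R^0)$ is finite-dimensional.

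\emph{Step 1: the matrix coefficients of $\beta_{\tilde{x}}(T)$.} Fix a representative $x$ of $\tilde{x}$. By Notation \ref{notn2}, $W_xe_\nu=e^{i\nu\cdot x}e_\nu$. Since $W_x$ is unitary, $W_x^*e_\nu=e^{-i\nu\cdot x}e_\nu$. Hence
\[
(\beta_{\tilde{x}}(T))_{\lambda\nu}=\langle e_\lambda,W_xTW_x^*e_\nu\rangle=\langle W_x^*e_\lambda,TW_x^*e_\nu\rangle=e^{i\lambda\cdot x}e^{-i\nu\cdot x}T_{\lambda\nu}=e^{i(\lambda-\nu)\cdot x}T_{\lambda\nu}.
\]
This expression is independent of the chosen representative, since $\lambda-\nu\in\mathbb Z^n$.

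\emph{Step 2: justify the integral and pass to entries.} The map $\tilde{x}\mapsto K_R(\tilde{x})\beta_{\tilde{x}}(T)$ is continuous from $\mathbb T^n$ into $\mathcal{B}(H_R^0)$. Indeed, each matrix entry is a trigonometric polynomial, by Step 1 and Lemma \ref{kernel}, and the space is finite-dimensional. So Lemma \ref{P4} makes the map Bochner integrable. For each $\lambda,\nu$ the functional $S\mapsto\langle e_\lambda,Se_\nu\rangle$ is bounded, hence closed, so Theorem \ref{P7''} lets it commute with the integral:
\[
\Big(\int_{\mathbb T^n}K_R\,\beta_{\tilde{x}}(T)\,d\tilde{x}\Big)_{\lambda\nu}=T_{\lambda\nu}\int_{\mathbb T^n}K_R(\tilde{x})e^{i(\lambda-\nu)\cdot x}\,d\tilde{x}.
\]

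\emph{Step 3: evaluate the scalar integral.} By Lemma \ref{kernel}, $K_R(\tilde{x})=\sum_km_R(k)e^{ik\cdot x}$, which is a finite sum by Lemma \ref{mR(k)}(ii). Orthonormality of characters with respect to the normalized Haar measure then gives
\[
\int_{\mathbb T^n}K_R(\tilde{x})e^{i(\lambda-\nu)\cdot x}\,d\tilde{x}=m_R(-(\lambda-\nu)).
\]
By Lemma \ref{mR(k)}(iii), this equals $m_R(\lambda-\nu)$. Therefore each entry of the integral is $m_R(\lambda-\nu)T_{\lambda\nu}=(S_{m_R}(T))_{\lambda\nu}$, and the identity follows.

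No step is a genuine obstacle. The only point needing care is Step 2, namely interchanging the Bochner integral with the coordinate functionals; finite-dimensionality together with Theorem \ref{P7''} handles it. The sign flip in Step 3 is absorbed by the symmetry $m_R(k)=m_R(-k)$.
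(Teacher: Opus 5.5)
Your proof is correct, but it takes a different route from the paper. The paper never evaluates a scalar integral on the matrix side. It writes $T=\rho_R(a)$ and uses the intertwining $\rho_R\circ F_{m_R}=S_{m_R}\circ\rho_R$, which follows from Lemma~\ref{matrixcoeff}. It then invokes Lemma~\ref{average} to get $S_{m_R}(T)=\rho_R\bigl(\int_{\mathbb T^n}K_R(\tilde{x})\alpha_{\tilde{x}}(a)\,d\tilde{x}\bigr)$. Finally it moves the bounded map $\rho_R$ inside the Bochner integral via Theorem~\ref{P7''} and applies the equivariance $\rho_R\circ\alpha_{\tilde{x}}=\beta_{\tilde{x}}\circ\rho_R$.

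You instead work entirely in the finite-dimensional space $\mathcal B(H_R^0)$. The coordinate functionals $S\mapsto\langle e_\lambda,Se_\nu\rangle$ play the role of $\rho_R$, and you redo directly the Fourier-coefficient computation $\int_{\mathbb T^n}K_R(\tilde{x})e^{ik\cdot x}\,d\tilde{x}=m_R(-k)$ that the paper hides inside Lemma~\ref{average}.

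The paper's route reuses the algebra-side result and highlights the parallel between $(\mathcal{A}_\Theta,\alpha,F_{m_R})$ and $(E_R,\beta,S_{m_R})$ that drives the whole convergence argument. However, it genuinely needs $T\in E_R$, in order to write $T=\rho_R(a)$, and it relies on the infinite-dimensional Bochner-integral approximation used in Lemma~\ref{average}.

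Your route is more elementary and self-contained, and it proves slightly more. Nothing in it uses $T\in E_R$, so it shows that $T\mapsto\int_{\mathbb T^n}K_R(\tilde{x})\beta_{\tilde{x}}(T)\,d\tilde{x}$ is the Schur multiplier with symbol $m_R(\lambda-\nu)$ on all of $\mathcal B(H_R^0)$. Note also that your Step~1 does real work in your argument, whereas in the paper the entry formula for $\beta_{\tilde{x}}(T)$ plays no role in establishing the integral identity.
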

\begin{proof}
The first equation is immediate from $W_xe_\lambda=e^{i\lambda\cdot x}e_\lambda$. Now, observe that $\rho_R\circ F_{m_R}=S_{m_R}\circ\rho_R$, and let $T=\rho_R(a)$ for some $a\in \mathcal{A}_{\Theta}$. Then, by \Lref{average} we get
\begin{IEEEeqnarray*}{lCl}
S_{m_R}(T) &=& \rho_R(F_{m_R}(a))\\
&=& \rho_R\left(\int_{\mathbb{T}^n}K_R({\tilde{x}})\alpha_{\tilde{x}}(a)\,d{\tilde{x}}\right).
\end{IEEEeqnarray*}
Since $\rho_R$ is a bounded linear map, using Theorem \ref{P7''} we get the following:
$$ S_{m_R}(T)=\int_{\mathbb{T}^n}K_R({\tilde{x}})\rho_R(\alpha_{\tilde{x}}(a))\,d{\tilde{x}}=\int_{\mathbb{T}^n}K_R({\tilde{x}})\beta_{\tilde{x}}(T)\,d{\tilde{x}}.$$
\end{proof}

\begin{ppsn}\label{tside}
For $T\in E_R\subseteq\mathcal{B}(H_R^0)$, we have
\[
T-\rho_R\sigma_R(T)=\int_{\mathbb T^n}K_R({\tilde{x}})\big(T-\beta_{\tilde{x}}(T)\big)\,d{\tilde{x}}\,,\quad \|T-\rho_R\sigma_R(T)\|\le\gamma_R\|[D_R,T\otimes I_2]\|.
\]
\end{ppsn}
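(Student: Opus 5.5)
The plan mirrors the proof of \Pref{aside}, with $\beta$ in the role of $\alpha$ and \Pref{Tside} replacing \Pref{orbitbound}.

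\textbf{Integral formula.} By \Pref{compositions}, $\rho_R\sigma_R(T)=S_{m_R}(T)$ for $T\in E_R$. By \Lref{betax},
\[
S_{m_R}(T)=\int_{\mathbb T^n}K_R(\tilde x)\,\beta_{\tilde x}(T)\,d\tilde x .
\]
We need Bochner integrability of $\tilde x\mapsto K_R(\tilde x)\beta_{\tilde x}(T)$. This is immediate from \Lref{P4}: $\mathcal B(H_R^0)$ is finite-dimensional, $W_x$ depends continuously on $x$, and $K_R$ is a trigonometric polynomial, so the map is continuous on the compact space $\mathbb T^n$. Since $\int K_R=1$ by \Lref{kernel}, we may write $T=\int K_R(\tilde x)\,T\,d\tilde x$. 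Subtracting the two integrals, using linearity of the Bochner integral, gives
\[
T-\rho_R\sigma_R(T)=\int_{\mathbb T^n}K_R(\tilde x)\bigl(T-\beta_{\tilde x}(T)\bigr)\,d\tilde x .
\]

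\textbf{Norm estimate.} By \Lref{P6} and $K_R\ge0$,
\[
\|T-\rho_R\sigma_R(T)\|\le\int_{\mathbb T^n}K_R(\tilde x)\,\|T-\beta_{\tilde x}(T)\|\,d\tilde x .
\]
The last inequality of \Pref{Tside} bounds the integrand pointwise by
\[
K_R(\tilde x)\,\sqrt n\, l(\tilde x)\,\bigl\|[D_R,T\otimes I_{2^{\lfloor n/2\rfloor}}]\bigr\| .
\]
Integrating gives exactly $\gamma_R\,\|[D_R,T\otimes I]\|$, with $\gamma_R$ as in \Pref{aside}. Here $I_2$ in the statement is read as $I_{2^{\lfloor n/2\rfloor}}$.

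\textbf{Measurability.} The only point needing care is that $\tilde x\mapsto l(\tilde x)$ and $\tilde x\mapsto\|T-\beta_{\tilde x}(T)\|$ be integrable, so that the pointwise bound can be integrated. Both functions are continuous: $l$ is a distance function on $\mathbb T^n$, and the second is continuous by the previous paragraph. So there is no real obstacle, and the proof is essentially formal once \Lref{betax} and \Pref{Tside} are in hand.
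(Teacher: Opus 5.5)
Your proof is correct and matches the paper's argument essentially step for step. You identify $\rho_R\sigma_R(T)=S_{m_R}(T)=\int_{\mathbb T^n}K_R(\tilde x)\beta_{\tilde x}(T)\,d\tilde x$ via \Pref{compositions} and \Lref{betax}, subtract this from $T=\int_{\mathbb T^n} K_R(\tilde x)\,T\,d\tilde x$, and then combine \Lref{P6} with the pointwise bound of \Pref{Tside}; your continuity and integrability remarks, and your reading of $I_2$ as $I_{2^{\lfloor n/2\rfloor}}$, are correct details the paper leaves implicit.
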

\begin{proof}
By Lemma \ref{betax} and \Pref{compositions}, we have $\int_{\mathbb T^n} K_R({\tilde{x}})\beta_{\tilde{x}}(T)\,d{\tilde{x}}=S_{m_R}(T)=\rho_R\sigma_R(T)$. Subtracting from $T=\int_{\mathbb T^n} K_R({\tilde{x}})T\,d{\tilde{x}}$ gives the first equation. For the second equation, by \Lref{P6} we have 
\begin{IEEEeqnarray*}{lCl}
\|T-\rho_R\sigma_R(T)\| &\le& \int_{\mathbb T^n} K_R({\tilde{x}})\|T-\beta_{\tilde{x}}(T)\|\,d{\tilde{x}}\\
&\le& \sqrt{n}\Big(\int_{\mathbb T^n} K_R({\tilde{x}})l({\tilde{x}})\,d{\tilde{x}}\Big)\|[D_R,T\otimes I_{2^{\lfloor{n/2}\rfloor}}]\|=\gamma_R\|[D_R,T\otimes I_{2^{\lfloor{n/2}\rfloor}}]\|,
\end{IEEEeqnarray*}
where the second inequality follows from \Pref{Tside}.
\end{proof}

\begin{rmrk}\rm
In the classical case of torus, the fact that $\rho_R\circ\sigma_R$ is an approximate identity of $E_R$ was obtained via a variant of the result of Bożejko and Fendler, see \cite[Lemma 3.5]{LvS}. The argument in that lemma heavily relies on the commutativity of $C(\mathbb T^d)$ to pass from the completely bounded norm to the usual operator norm, and hence does not apply in our noncommutative setting.
\end{rmrk}

The main result of the paper is the following.
\begin{thm}\label{main}
The state spaces $(\mathcal{S}(E_R),d_{E_R})$ converge to $(\mathcal{S}(\mathcal{A}_{\Theta}),d_{\mathcal{A}_{\Theta}})$ in the Gromov--Hausdorff distance as $R\to\infty$.
\end{thm}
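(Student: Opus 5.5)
The plan is to apply \Tref{Thmofsuijlekom} to the spectral triple $(\mathcal{A}_\Theta,H,D)$ and its truncations $(E_R,H_R,D_R)$, with the pair $(\rho_R,\sigma_R)$ of \Eref{eq:1}. To do this I will check the hypotheses of the theorem and the three conditions of Definition \ref{dfn:orderiso}.

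\textbf{Metric hypotheses.} Connes' distance must induce the weak$^*$ topology on each state space. For $\mathcal{A}_\Theta$ this is Rieffel's result \cite[Thm. 4.2]{Rie1}, quoted in Section 2.2. For each $E_R$ it is the Proposition showing that $(E_R,L_R)$ is a compact quantum metric space.

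\textbf{Condition $(i)$.} Unitality and positivity of $\rho_R$ come from \Pref{rho}. For $\sigma_R$ they come from \Lref{sigmaCP}, restricted to $E_R\subseteq\mathcal{B}(H_R^0)$.

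One small point needs checking: $\sigma_R$ must map $E_R$ into $\mathcal{A}_\Theta$, or into whatever operator system $E$ plays the role of the domain. By \Pref{compositions}, $\sigma_R(\rho_R(a))=F_{m_R}(a)$. Since $m_R$ has finite support, this is a trigonometric polynomial and so lies in $\mathcal{A}_\Theta$.

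\textbf{Condition $(ii)$.} Norm contractivity holds because both maps are unital and completely positive, so $\|\rho_R\|=\|\sigma_R\|=1$. Lipschitz contractivity is \Pref{rho} for $\rho_R$, using $\pi_R\rho_R(a)=P_RaP_R$. For $\sigma_R$ it is \Pref{sigmaC1}, which gives $\|[D,\sigma_R(T)\otimes I]\|\le\|[D_R,\pi_R(T)]\|$.

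\textbf{Condition $(iii)$.} This is where the real work lies, and it has already been done. \Pref{aside} gives $\|a-\sigma_R\rho_R(a)\|\le\gamma_R\|[D,a]\|$, and \Pref{tside} gives $\|T-\rho_R\sigma_R(T)\|\le\gamma_R\|[D_R,T\otimes I]\|$. Both use the same constant $\gamma_R=\sqrt{n}\int K_R\,l$. \Pref{concentration} shows that $\gamma_R\to0$ as $R\to\infty$.

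\textbf{Conclusion.} With all conditions in place, \Tref{Thmofsuijlekom} gives the Gromov--Hausdorff convergence. Quantum Gromov--Hausdorff convergence then follows from \cite[Prop. 2.2.4]{KK1}, as remarked before the theorem.

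\textbf{Expected obstacle.} I expect the main difficulty to be bookkeeping rather than substance. The main item is the domain of the maps: \Tref{Thmofsuijlekom} uses $E=\mathcal{A}_\Theta$ as a dense operator system, while the state space is that of its closure. I will note that states of the closure are determined by their values on $\mathcal{A}_\Theta$, and that the Lipschitz seminorm is defined on $\mathcal{A}_\Theta$, so the statement is unaffected. A second item is the notational slip $I_2$ in \Pref{tside}, which should read $I_{2^{\lfloor n/2\rfloor}}$; I will read it that way.
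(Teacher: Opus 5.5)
Your proposal is correct and follows the paper's proof essentially verbatim. It uses the same pair $(\rho_R,\sigma_R)$, with unitality, positivity and $C^1$-contractivity from \Pref{rho}, \Lref{sigmaCP} and \Pref{sigmaC1}, and the approximation bounds from \Pref{aside} and \Pref{tside} with the common $\gamma_R\to0$ of \Pref{concentration}, before invoking \Tref{Thmofsuijlekom}. Your additional checks are all sound bookkeeping that the paper leaves implicit: the weak$^*$-metrization hypotheses, that $\sigma_R$ takes values in $\mathcal{A}_\Theta$, the identification of states on the dense subalgebra with states on its closure, and the $I_2$ typo.
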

\begin{proof}
We have the maps $\rho_R(a)=P_R^0\pi_\tau(a)P_R^0$ and $\sigma_R(T)=\frac1{N_R}\sum_{\lambda,\nu\in\Lambda_R}T_{\lambda\nu}e_\lambda e_\nu^*$ from Section $3$. By Lemma \ref{rho} and \ref{sigmaCP}, we have unitality, complete positivity, and norm-contractivity of the maps $\rho_R$ and $\sigma_R$, respectively, for each $R\geq 0$. The Lipschitz-contractivity of $\rho_R$ follows from \Pref{rho}, and that of $\sigma_R$ follows from \Pref{sigmaC1}. Finally, \Pref{aside} and \Pref{tside} with the common $\gamma_R\to0$ (see Proposition \ref{concentration}) establishes $(\rho_R,\sigma_R)$ as a $C^1$-approximate order isomorphism between $(\mathcal{A}_{\Theta},H,D)$ and the spectrally truncated spaces $\{(E_R,H_R,D_R)\}_{R\ge 0}$. Consequently, by \Tref{Thmofsuijlekom} the result follows.
\end{proof}

As we have mentioned earlier, the quantum Gromov--Hausdorff convergence of the truncated operator systems to the noncommutative torus follows from the above result, applying Proposition $2.2.4$ in \cite{KK1}.

Our entire analysis survives in the case of $\Theta=0$. Consequently, we recover the main result of \cite{LvS} as a special case.
\begin{crlre}
The state spaces of the spectral truncations for tori converges to the state space of tori in the Gromov--Hausdorf distance.
\end{crlre}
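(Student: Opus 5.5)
The plan is to obtain the corollary as the special case $\Theta=0$ of \Tref{main}, after checking that nothing in the chain of arguments used a hypothesis that fails at $\Theta=0$. When $\Theta=0$ we have $\mathbb{T}_0=C(\mathbb{T}^n)$ and $\mathcal{A}_0=C^\infty(\mathbb{T}^n)$. The cocycle $\omega(k,\ell)$ of \Lref{cocycle} is then identically $1$, so $e_k$ is the character $x\mapsto e^{ik\cdot x}$ and $\tau$ is integration against normalized Haar measure. The canonical spectral triple of Section 2.1 then becomes the standard Dirac spectral triple on $L^2(\mathbb{T}^n)\otimes\bbc^{2^{\lfloor n/2\rfloor}}$, with $D=\sum_j -i\partial_j\otimes\sigma_j$. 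This is exactly the spectral triple whose truncations are considered in \cite{LvS}, and the operator systems $E_R=P_R^0C^\infty(\mathbb{T}^n)P_R^0$ are the Toeplitz-type truncations studied there.

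Next I will go through the ingredients of the proof of \Tref{main} and confirm each one holds for an arbitrary real skew-symmetric $\Theta$, including $\Theta=0$.
\begin{enumerate}[$(i)$]
\item \Pref{rho}, \Lref{sigmaCP} and \Pref{sigmaC1} (unitality, positivity and $C^1$-contractivity) use only that the $e_\lambda$ are unitaries and the amplification argument. No simplicity or irrationality of $\Theta$ enters; the paper states explicitly that only skew-symmetry is assumed.
\item \Pref{compositions} involves a phase cancellation, which is trivial when $\omega\equiv1$.
\item \Pref{aside}, \Pref{tside} and \Pref{concentration} depend only on the torus actions $\alpha$ and $\beta$ and on the kernel $K_R$, and $K_R$ is independent of $\Theta$.
\end{enumerate}
Hence $(\rho_R,\sigma_R)$ is a $C^1$-approximate order isomorphism for $\Theta=0$ as well.

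Finally, I will invoke \Tref{Thmofsuijlekom} to conclude Gromov--Hausdorff convergence of $(\mathcal{S}(E_R),d_{E_R})$ to $(\mathcal{S}(C(\mathbb{T}^n)),d)$. This requires the hypotheses on the weak-$\ast$ topology: Rieffel's theorem covers the limit algebra, and the proposition in Section 3.1 covers the truncations. The only point needing care is identification rather than analysis. I must make sure the state space of the closure of $\mathcal{A}_0$, metrized by Connes' distance, is the one meant in the classical statement. I must also make sure that the Lipschitz seminorm $\|[D,f]\|$ agrees with the one used in \cite{LvS}; the two may differ only in the normalization of the Clifford module, which does not affect the seminorm because of \Lref{spinorcompression}. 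I expect no genuine obstacle.
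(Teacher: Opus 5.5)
Your proposal is correct and is exactly the paper's argument. The corollary is \Tref{main} at $\Theta=0$, where $\omega\equiv1$ and $\mathcal{A}_0=C^\infty(\mathbb{T}^n)$; every step of the $C^1$-approximate order isomorphism, and the weak-$\ast$ hypotheses of \Tref{Thmofsuijlekom}, goes through with no condition on $\Theta$ beyond skew-symmetry.

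One small correction to your last remark: \Lref{spinorcompression} only gives the one-sided bound $\|X_\mu\|\le\|X\|$, so it does not show that $\|[D,f]\|$ is independent of the Clifford module. That check is unnecessary anyway, since at $\Theta=0$ the triple of Section 2.1 is already the standard Dirac triple on $L^2(\mathbb{T}^n)\otimes\mathbb{C}^{2^{\lfloor n/2\rfloor}}$ considered in \cite{LvS}.
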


\section{Structure analysis: \texorpdfstring{$C^*$}{C*}-envelope and propagation number}

The $C^*$-envelope and propagation number dertermines further sturctures of the truncated operator systems. We do not recall their definitions and properties here, instead we refer to \cite{CS1,F,LvS} for all the details.

This section mostly follows from \cite[Section 4.1]{LvS}, except for a few necessary changes due to nontrivial cocycles, and so we make it brief.

Recall $L(s)$, for $s\in \mathbb{Z}^n$, from Lemma \ref{mR(k)}. For $k,\ell\in \Lambda_R$, let $E_{k,\ell}\in \mathcal{B}(H_R^0)\cong M_{N_R}(\mathbb{C})$ be the standard matrix unit. Let $p\in \mathbb{Z}^{n}$, and define $T_{p}\in \mathcal{B}(H_R^0)\cong M_{N_R}(\mathbb{C})$ by $T_{p}:=\rho_{R}(e_{-p})$. Then \Lref{matrixcoeff} implies that $(T_p)_{\la\mu}=(e_{-p})_{\la-\mu}w(\la-\mu,\mu)$, where $(e_{-p})_{\la-\mu}$ is the $(\la-\mu)^{\text{th}}$-coefficient of the Fourier expansion of $e_{-p}$, which is $1$ if and only if $\la-\mu=-p$. Hence $(T_p)_{\la\mu}$ is non-zero if and only if $\la-\mu=-p$, and in that case $(T_p)_{\la\mu}=w(-p,\mu)$. This proves the following:
\begin{equation}\label{eq1 of envelope}
T_p=\sum_{s\in L(p)} w(-p,s)E_{s-p,s}.
\end{equation}
From the above reasoning, it is also evident that $T_p=0$ if $p\notin \Lambda_R-\Lambda_R$, because $p\notin \Lambda_R-\Lambda_R$ if and only if $L(p)=\emptyset$. The support of the matrix $T_p$ is $\{(s-p,s):s\in L(p)\}$. Further, it is easy to see that $\{T_p:p\in \Lambda_R-\Lambda_R\}$ is a basis of $E_R$. Finally,
$$
T_{(0,0,\dots,0)}=\sum_{s\in \Lambda_R} E_{s,s}=\mathrm{id}_{H_R^0}.
$$

\begin{lmma}\label{lemma 1 of envelope}
Let $p,q\in \Lambda_R-\Lambda_R$. Then we have
$$T_pT_q=w(-p,-q)\sum_{s\in L(p+q)\,\cap\, L(q)}w(-(p+q),s)E_{s-p-q,s}.$$
\end{lmma}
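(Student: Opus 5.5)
We will multiply the expansions \eqref{eq1 of envelope} of $T_p$ and $T_q$ directly and then use the cocycle identity of \Lref{cocycle} to collect the phases. By \eqref{eq1 of envelope},
\[
T_pT_q=\sum_{s'\in L(p)}\sum_{s\in L(q)} w(-p,s')\,w(-q,s)\,E_{s'-p,s'}E_{s-q,s}.
\]
Since $E_{s'-p,s'}E_{s-q,s}=\delta_{s',s-q}E_{s'-p,s}$, only the terms with $s'=s-q$ survive. Thus the double sum collapses to a sum over those $s\in L(q)$ with $s-q\in L(p)$, and the general term is $w(-p,s-q)\,w(-q,s)\,E_{s-p-q,s}$.

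Next we identify the index set. By the definition of $L(\cdot)$ in \Lref{mR(k)}$(iv)$, $s\in L(q)$ means $s\in\Lambda_R$ and $s-q\in\Lambda_R$. The condition $s-q\in L(p)$ means $s-q\in\Lambda_R$ and $s-q-p\in\Lambda_R$. Together these say $s,\ s-q,\ s-p-q\in\Lambda_R$, which is exactly $s\in L(q)\cap L(p+q)$, since $s\in L(p+q)$ means $s,s-p-q\in\Lambda_R$. We should double-check the convention for $L$: with $L(k)=\overline B(0,R)\cap\overline B(k,R)\cap\mathbb Z^n$, membership $s\in L(k)$ means $\|s\|\le R$ and $\|s-k\|\le R$, which is consistent with the support $\{(s-p,s)\}$ of $T_p$.

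Finally, the phase must be rewritten as $w(-p,s-q)\,w(-q,s)=w(-p,-q)\,w(-(p+q),s)$. This is the only step needing care. Using bimultiplicativity from \Lref{cocycle}, $w(-p,s-q)=w(-p,s)\,w(-p,-q)$. Additivity in the first slot then gives $w(-p,s)\,w(-q,s)=w(-p-q,s)$. Combining these, the product of phases is $w(-p,-q)\,w(-(p+q),s)$, which yields the stated formula. Alternatively, one could derive this from $T_pT_q$ versus $\rho_R(e_{-p}e_{-q})=w(-p,-q)\rho_R(e_{-p-q})$ restricted to the correct support, but the direct matrix-unit computation above suffices. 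We expect the main (mild) obstacle to be the bookkeeping of the cocycle factors, in particular making sure the first-slot additivity is applied with the right signs.
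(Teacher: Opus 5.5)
Your proposal is correct and follows essentially the same route as the paper. The paper also expands $T_p$ and $T_q$ via \eqref{eq1 of envelope}, collapses the product of matrix units onto $s\in L(q)\cap L(p+q)$ (a step it defers to \cite{LvS}, whereas you verify it explicitly), and rewrites $w(-p,s-q)\,w(-q,s)=w(-p,-q)\,w(-(p+q),s)$ using the bimultiplicativity in \Lref{cocycle}.
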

\begin{proof}
Let $T_p=\sum_{m\in L(p)} w(-p,m)E_{m-p,m}$ and $T_q=\sum_{s\in L(q)} w(-q,s)E_{s-q,s}$. Then the proof goes verbatim from \cite{LvS}, but this time we have extra factors, i.e.,
$$T_pT_q=\sum_{s\in L(q)\,\cap\, L(p+q)} w(-p,s-q)w(-q,s)E_{s-p-q,s}.$$
Using the cocycle identities, we get that
$$w(-p,s-q)w(-q,s)=w(-p,s)w(-p,-q)w(-q,s)=w(-p,-q)w(-(p+q),s)$$ which completes the proof.
\end{proof}

As a special case, for $\ell,k\in \Lambda_R-\Lambda_R$ such that $\ell+k\in \Lambda_R-\Lambda_R$, we get, for $p=-k$ and $q=\ell+k$,
\begin{IEEEeqnarray}{lCl}\label{eq2 of  envelope}
T_{-k}T_{\ell+k}=w(k,-(\ell+k))\sum_{s\in L(\ell)\,\cap\, L(\ell+k)}w(-\ell,s)E_{s-\ell,s}.
\end{IEEEeqnarray}
Now define, for $r\geq 0$, $\sk_r:= \mathrm{co}(\Lambda_r)\subseteq \mathbb{R}^n$ (the closed convex hull of $\Lambda_{r}$). Clearly, $\sk_r\subseteq \overline{B}(0,r)$, $\sk_r\cap \mathbb{Z}^n= \Lambda_r$, and $\mathrm{Ext}(\sk_r)\subseteq \Lambda_r$.

\begin{lmma}\label{lemma 2 of envelope}
Fix $p,q\in \Lambda_R$, and define $\ell:=q-p\in\Lambda_R-\Lambda_R$. Then there exists $m\in\mathrm{Ext}(\sk_R)$ such that
$$\sk_{\|q\|}\cap (\sk_R -m+q)=\{q\}.$$ Put $k:=p-m\in \Lambda_R-\Lambda_R$. Then $\ell+k\in \Lambda_R-\Lambda_R$, and $L(\ell)\cap L(\ell+k)\cap \sk_{\|q\|}=\{q\}$. Consequently, $$s\in L(\ell)\cap L(\ell+k),\,s\neq q\implies \|s\|>\|q\|.$$
\end{lmma}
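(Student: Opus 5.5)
Since the statement is purely combinatorial and convex-geometric (the cocycle plays no role), the plan is to follow the argument of \cite[Section 4.1]{LvS}. The steps below find an extreme point $m$, translate the sets, and then check the inclusions.

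\textbf{Choosing $m$.} We work with $K':=\sk_R-m+q$, a translate of $\sk_R$ that contains $q$ (since $m\in\sk_R$). We want $K'$ to meet $\sk_{\|q\|}$ only at $q$. If $q=0$, then $\sk_0=\{0\}$ and any $m\in\mathrm{Ext}(\sk_R)$ works, so assume $q\neq 0$. Consider the linear functional $\phi(x):=\langle x,q\rangle$.

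Since $\sk_{\|q\|}\subseteq \overline{B}(0,\|q\|)$, we have $\phi\le\|q\|^2$ on $\sk_{\|q\|}$. Moreover, equality $\phi(x)=\|q\|^2$ with $\|x\|\le\|q\|$ forces $x=q$ by the equality case of Cauchy--Schwarz. So $\sk_{\|q\|}\cap\{\phi\ge\|q\|^2\}=\{q\}$.

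Next choose $m$ to be an extreme point of $\sk_R$ minimizing $\phi$ over $\sk_R$. Such an $m$ exists because $\sk_R$ is a polytope, and we select one minimizing vertex. Then every $y\in\sk_R$ satisfies $\phi(y)\ge\phi(m)$, so $\phi\ge \phi(q)=\|q\|^2$ on $K'$. Intersecting with the previous observation gives $\sk_{\|q\|}\cap K'\subseteq\{q\}$, and $q$ lies in both sets. This step is the crux; the only subtle point is attaining the minimum at an extreme point, which holds because a linear functional on a compact convex set attains its minimum on $\mathrm{Ext}$ (Krein--Milman / Bauer).

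\textbf{Membership of $k$ and $\ell+k$.} We have $k=p-m\in\Lambda_R-\Lambda_R$ because $m\in\mathrm{Ext}(\sk_R)\subseteq\Lambda_R$. Also $\ell+k=q-m\in\Lambda_R-\Lambda_R$.

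\textbf{The intersection $L(\ell)\cap L(\ell+k)$.} Recall $L(s)=\overline{B}(0,R)\cap\overline{B}(s,R)\cap\bbz^n=\Lambda_R\cap(\Lambda_R+s)$. Hence $L(\ell+k)\subseteq\Lambda_R+q-m\subseteq K'$.

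Now take $s\in L(\ell)\cap L(\ell+k)\cap\sk_{\|q\|}$. Then $s\in K'\cap\sk_{\|q\|}=\{q\}$. Conversely, $q\in L(\ell)$ since $q\in\Lambda_R$ and $q-\ell=p\in\Lambda_R$. Also $q\in L(\ell+k)$ since $q-(\ell+k)=m\in\Lambda_R$. Together these give equality.

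\textbf{Consequence.} Finally, let $s\in L(\ell)\cap L(\ell+k)$ with $s\neq q$. Then $s\notin\sk_{\|q\|}$. But $s\in\bbz^n$, and $\sk_{\|q\|}\cap\bbz^n=\Lambda_{\|q\|}$, so $s\notin\Lambda_{\|q\|}$, which means $\|s\|>\|q\|$.
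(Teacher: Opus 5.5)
Your proof is correct and is essentially the supporting-hyperplane argument of \cite{LvS}, to which the paper defers verbatim. The tangent hyperplane $\{x:\langle x,q\rangle=\|q\|^2\}$ meets $\sk_{\|q\|}$ only at $q$, and choosing $m$ as a vertex of $\sk_R$ minimizing $\langle\cdot,q\rangle$ places $\sk_R-m+q$ on the other side; the claims about $L(\ell)\cap L(\ell+k)$ then follow from $L(\ell+k)\subseteq\Lambda_R+q-m$, with the case $q=0$ handled separately as you do.
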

\begin{proof}
This follows verbatim from \cite{LvS}.
\end{proof}

With the fixed $p,q\in \Lambda_R$ and $\ell,k$ as in Lemma \ref{lemma 2 of envelope}, define $$c_{k,\ell}(s):=w(k,-(\ell+k))w(-\ell,s)\in \mathbb{S}^1\text{ for every } s.$$ Then, from \eqref{eq2 of  envelope}, we get that
\begin{IEEEeqnarray*}{lCl}
T_{-k}T_{\ell+k} &=& \sum_{s\in L(\ell)\,\cap\, L(\ell+k)}c_{k,\ell}(s)E_{s-\ell,s}\\
&=& \sum_{s\in L(\ell)\,\cap\, L(\ell+k)\cap \sk_{\|q\|}}c_{k,\ell}(s)E_{s-\ell,s}+\sum_{s\in L(\ell)\,\cap\, L(\ell+k)\cap \sk_{\|q\|}^{c}}c_{k,\ell}(s)E_{s-\ell,s}\\
&=& c_{k,\ell}(q)E_{p,q}+\sum_{\substack{s\in L(\ell)\,\cap\, L(\ell+k) \\ \|s\|>\|q\|}}c_{k,\ell}(s)E_{s-\ell,s}.
\end{IEEEeqnarray*}
Consequently,
\begin{equation}\label{eq3 for envelope}
E_{p,q}=c_{k,\ell}(q)^{-1} T_{-k}T_{\ell+k}-\sum_{\substack{s\in L(\ell)\,\cap\, L(\ell+k) \\ \|s\|>\|q\|}}\frac{c_{k,\ell}(s)}{c_{k,\ell}(q)}E_{s-\ell,s}.
\end{equation}
Furthermore, repeating this process for each $E_{s-\ell,s}$ appearing in the sum leads, in finitely many iterations, to a finite linear combination of products of the form \eqref{eq2 of  envelope} for $E_{p,q}$.

\begin{thm}\label{main result 2}
For each $R \geq 1$, the $C^*$-envelope and the propagation number of the truncated space $E_R$ is given by $C^*_e(E_R)=\mathcal{B}(H_R^0)\cong M_{N_R}(\mathbb{C})$ and $\mathrm{prop}(E_R)=2.$
\end{thm}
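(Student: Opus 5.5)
We first show that the products $T_pT_q$ with $p,q\in\Lambda_R-\Lambda_R$ span all of $\mathcal{B}(H_R^0)$, so that the $C^*$-algebra $C^*(E_R)$ generated by $E_R$ is the full matrix algebra. Fix $p,q\in\Lambda_R$ and choose $m$, $k$, $\ell$ as in Lemma \ref{lemma 2 of envelope}. Then \eqref{eq3 for envelope} writes $E_{p,q}$ as a scalar multiple of the product $T_{-k}T_{\ell+k}\in E_R\cdot E_R$, minus a combination of matrix units $E_{s-\ell,s}$ with $\|s\|>\|q\|$.

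We then argue by downward induction on $\|q\|$, which is legitimate because $\|q\|$ takes finitely many values on $\Lambda_R$.
\begin{itemize}
\item[] \emph{Base case.} If $\|q\|$ is maximal, the error sum in \eqref{eq3 for envelope} is empty, since every index $s$ appearing there lies in $\Lambda_R$. Hence $E_{p,q}\in\mathrm{span}(E_R\cdot E_R)$.
\item[] \emph{Inductive step.} Each error term $E_{s-\ell,s}$ has $s\in L(\ell)\subseteq\Lambda_R$ and $\|s\|>\|q\|$, so it already lies in $\mathrm{span}(E_R\cdot E_R)$ by the induction hypothesis.
\end{itemize}
Thus every matrix unit lies in $\mathrm{span}(E_R\cdot E_R)$, so $E_R^{\circ 2}=\mathcal{B}(H_R^0)$ and $C^*(E_R)=\mathcal{B}(H_R^0)$.

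For the $C^*$-envelope, we use that $M_{N_R}(\mathbb{C})$ is simple. The Shilov boundary ideal of $E_R$ inside $C^*(E_R)=M_{N_R}(\mathbb{C})$ is an ideal of a simple algebra. It is also proper, because the quotient map must be completely isometric on $E_R$ and $E_R\neq 0$. Hence the boundary ideal is $0$, and $C^*_e(E_R)=C^*(E_R)\cong M_{N_R}(\mathbb{C})$. Equivalently, $\mathrm{id}$ is a boundary representation, since the identity representation of $M_n$ is irreducible and the envelope is a quotient of $C^*(E_R)$.

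For the propagation number, recall that $\mathrm{prop}(E_R)$ is the least $m$ with $E_R^{\circ m}=C^*_e(E_R)$, where $E_R^{\circ m}$ is the span of $m$-fold products inside the envelope. The previous step gives $\mathrm{prop}(E_R)\le 2$. For the lower bound $\mathrm{prop}(E_R)\neq 1$, we show $E_R\neq\mathcal{B}(H_R^0)$ by a dimension count. Since $\{T_p\}$ is a basis of $E_R$, we have $\dim E_R=\#(\Lambda_R-\Lambda_R)$, while $\dim\mathcal{B}(H_R^0)=N_R^2$. The map $\Lambda_R\times\Lambda_R\to\Lambda_R-\Lambda_R$, $(\lambda,\nu)\mapsto\lambda-\nu$, is surjective. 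For $R\ge1$ it is not injective: $0$ and some unit vector $e$ both lie in $\Lambda_R$, and $(e,e)$ and $(0,0)$ both map to $0$. Therefore $\dim E_R<N_R^2$, and $\mathrm{prop}(E_R)=2$.

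The main obstacle is making the induction in the first step rigorous. One must check that every $s$ appearing in the error sum of \eqref{eq3 for envelope} satisfies $s\in\Lambda_R$ and $s-\ell\in\Lambda_R$, so that $E_{s-\ell,s}$ is a genuine matrix unit of $\mathcal{B}(H_R^0)$. This holds because $s\in L(\ell)=\overline{B}(0,R)\cap\overline{B}(\ell,R)\cap\mathbb{Z}^n$. One must also check that the phases $c_{k,\ell}(s)$ are nonzero, which holds since they lie in $\mathbb{S}^1$. Beyond this, the argument follows \cite[Section 4.1]{LvS}, with the cocycle only contributing unimodular scalars.
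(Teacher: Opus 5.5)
Your proof is correct. For the spanning and envelope parts it coincides with the paper's argument. The paper also extracts each $E_{p,q}$ from \eqref{eq3 for envelope} by ``repeating this process \dots\ in finitely many iterations'' and then invokes simplicity of $M_{N_R}(\mathbb{C})$. Your downward induction on $\|q\|$, together with the check that $s,\,s-\ell\in\Lambda_R$ for $s\in L(\ell)$, is a rigorous rendering of that iteration.

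The one real difference is the lower bound $\mathrm{prop}(E_R)>1$. The paper uses Lemma \ref{matrixcoeff} to see that every $T=\rho_R(a)\in E_R$ has constant diagonal $T_{\lambda\lambda}=a_0$, so $E_{0,0}\notin E_R$. You instead compare dimensions, $\dim E_R\le\#(\Lambda_R-\Lambda_R)<N_R^2$.

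\begin{itemize}
\item The paper's version exhibits an explicit element of $\mathcal{B}(H_R^0)\setminus E_R$ and needs no information about the $T_p$.
\item Your version yields the codimension. It actually needs only that the $T_p$ span $E_R$, which is immediate from $\rho_R(a)=\sum_k a_kT_{-k}$; their linear independence is not used.
\end{itemize}

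In both arguments, $R\ge1$ enters this step only through $N_R\ge2$.

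A minor caveat concerns your ``equivalently'' sentence about boundary representations. Irreducibility of the identity representation does not by itself make it a boundary representation. The claim does hold, for instance by Arveson's boundary theorem, since $\mathcal{B}(H_R^0)$ consists of compact operators. In any case the sentence is redundant, because your Shilov-ideal and simplicity argument already settles $C^*_e(E_R)$.
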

\begin{proof}
That $\mathcal{B}(H_R^0)$ is a $C^*$-extension of $E_R$ is immediate now, as by the above argument, any standard matrix unit $E_{p,q}$ lies in the $\mathrm{span}\{T_{-k}T_{\ell+k}:\ell,k,\ell+k\in\Lambda_R-\Lambda_R\}.$ Further, we get that the $C^*$-envelope of $E_R$ is also $\mathcal{B}(H_R^0)\cong M_{N_R}(\mathbb{C})$ by the simpleness, and the above argument also shows that $\mathrm{prop}(E_R)\leq 2$. Now, to show that $\mathrm{prop}(E_R)>1$, observe that for any $T=\rho\left(\sum_{\la\in \mathbb{Z}^n} a_\la e_\la\right)\in E_R$, the entries on the leading diagonal are all the same, i.e., $T_{\la\la}=a_0$, for all $\la\in \Lambda_R$. Thus $E_{0,0}\notin E_R$, which concludes the proof.
\end{proof}

This shows that the deeper structure of the truncated operator systems for tori survives deformations. Despite our efforts, the structure of dual for the truncated operator systems remains largely unexplored, much as in the classical case of tori.

\bigskip

\noindent{\sc Vibhor Bhatt} (\texttt{vibhorbhatt25@iitk.ac.in})\\
{\footnotesize Department of Mathematics,
Indian Institute of Technology, Kanpur 208016, India}
\vspace*{.5cm}

\noindent{\sc Satyajit Guin} (\texttt{satyamath@gmail.com; sguin@iitk.ac.in})\\
{\footnotesize Department of Mathematics,
Indian Institute of Technology, Kanpur 208016, India}
\vspace*{.5cm}

\noindent{\sc Palash Sharma} (\texttt{palashsharma8765434124@gmail.com; ms22001@iisermohali.ac.in})\\
{\footnotesize Department of Mathematical Sciences,
Indian Institute of Science Education and Research, Mohali,\\ Knowledge City, Sector 81, Punjab 140 306, India}


\begin{thebibliography}{10}\footnotesize

\bibitem{AKK}
Aguilar, K.; Kaad, J.; Kyed, D.:
\newblock The Podle\'s spheres converge to the sphere.
\newblock {\em Comm. Math. Phys.} 392 (2022), no. 3, 1029--1061.

\bibitem{A1}
Arveson, W.~B.:
\newblock Subalgebras of $C^*$-algebras.
\newblock {\em Acta Math.}, 123 (1969), 141--224.

\bibitem{BGB}
Bhatt, V.; Guin, S.; Saurabh, B.:
\newblock Quantum Gromov--Hausdorff Convergence for Extensions of $C^*$-Algebras. arXiv:2607.06158.

\bibitem{CG}
Chakraborty, P.~S.; Guin, S.:
\newblock Equivalence of two approaches to Yang-Mills on noncommutative torus.
\newblock {\em J. Noncommut. Geom.} 9 (2015), no. 2, 447--471.

\bibitem{C1}
Connes, A.:
\newblock Compact metric spaces, Fredholm modules and hyperfiniteness.
\newblock {\em Ergodic Theory and Dynamical Systems}, 9 (1989), no. 2, 207--220.

\bibitem{C2}
Connes, A.:
\newblock Noncommutative Geometry.
\newblock Academic Press, San Diego, 1994.

\bibitem{CS1}
Connes, A.; van Suijlekom, W.~D.:
\newblock Spectral Truncations in Noncommutative Geometry and Operator Systems. \newblock {\em Comm. Math. Phys.} 383 (2021), 2021--2067.

\bibitem{CS2}
Connes, A; van Suijlekom, W.~D.:
\newblock Tolerance relations and operator systems.
\newblock {\em Acta Sci. Math. (Szeged)} 88 (2022), no. 1-2, 101--129.

\bibitem{D}
Dieudonn\'e, J.:
\newblock Foundations of modern analysis.
\newblock Pure Appl. Math., Vol. X {\em Academic Press, New York-London}, 1960.

\bibitem{DU}
Diestel, J.; Uhl, J. J., Jr.:
\newblock Vector Measures.
\newblock Math. Surveys, No. 15 {\em American Mathematical Society, Providence, RI}, 1977.

\bibitem{F}
Farenick, D.:
\newblock The operator system of Toeplitz matrices.
\newblock {\em Trans. Amer. Math. Soc. Ser. B} 8 (2021), 999--1023.

\bibitem{GS}
Gaudillot-Estrada, Y.; van Suijlekom, W. D.:
\newblock Convergence of spectral truncations for compact metric groups.
\newblock {\em Int. Math. Res. Not. IMRN} 2025, no. 13, Paper No. rnaf197.

\bibitem{GVF}
Gracia-Bond\'ia, J.~M.; V\'arilly, J.~C.; Figueroa, H.:
\newblock Elements of noncommutative geometry.
\newblock Birkhäuser Adv. Texts Basler Lehrbücher [Birkhäuser Advanced Texts: Basel Textbooks] {\em Birkhäuser Boston, Inc., Boston, MA}, 2001.

\bibitem{H}
Hamana, M.;
\newblock Injective envelopes of operator systems.
\newblock {\em Publ. Res. Inst. Math. Sci.} 15 (1979), no. 3, 773--785.

\bibitem{KK}
Kaad, J.; Kyed, D.:
\newblock A comparison of two quantum distances.
\newblock {\em Math. Phys. Anal. Geom.} 26 (2023), no. 1, Paper No. 8.

\bibitem{KK1}
Kaad, J.; Kyed, D.:
\newblock The quantum metric structure of quantum  $SU(2)$. 
\newblock {\em Mem. Eur. Math. Soc.}, 18 EMS Press, Berlin, 2025.

\bibitem{L1}
Latrémolière, F.:
\newblock The quantum Gromov--Hausdorff propinquity.
\newblock {\em Trans. Amer. Math. Soc.}, 368 (2016), no. 1, 365--411.

\bibitem{L2}
Latrémolière, F.:
\newblock The Gromov--Hausdorff propinquity for metric spectral triples.
\newblock {\em Adv. Math.} 404 (2022), part A, Paper No. 108393.

\bibitem{L}
Leimbach, M.:
\newblock Convergence of Peter–Weyl truncations of compact quantum groups.
\newblock {\em J. Noncommut. Geom.}, to appear.

\bibitem{LvS}
Leimbach, M.; van Suijlekom, W.~D.: Gromov--Hausdorff convergence of spectral truncations for tori.
\newblock {\em Adv. Math.} 439 (2024), 109496.

\bibitem{Li}
Li, H.:
\newblock $\theta$-deformations as compact quantum metric spaces.
\newblock {\em Comm. Math. Phys.} 256 (2005), no. 1, 213--238.

\bibitem{P}
Paulsen, V. I.:
\newblock Completely Bounded Maps and Operator Algebras. 
\newblock Cambridge Studies in Advanced Mathematics, Cambridge University Press, 2003.

\bibitem{Rie0}
Rieffel, M.~A.
\newblock Noncommutative tori--a case study of noncommutative differentiable manifolds.
\newblock Contemp. Math., 105. {American Mathematical Society, Providence, RI}, 1990.

\bibitem{Rie1}
Rieﬀel, M.~A.:
\newblock Metrics on states from actions of compact groups.
\newblock {\em Doc. Math.}, 3 (1998), 215--229.

\bibitem{Rie2}
Rieﬀel, M.~A.:
\newblock Metrics on state spaces.
\newblock {\em Doc. Math.}, 4 (1999), 559--600.

\bibitem{Rie3}
Rieﬀel, M.~A.:
\newblock Gromov--Hausdorff distance for quantum metric spaces.
\newblock {\em Mem. Amer. Math. Soc.}, 168 (2004), no. 796, 1--65.

\bibitem{Rie4}
Rieffel, M.~A.:
\newblock Convergence of Fourier truncations for compact quantum groups and finitely generated groups.
\newblock {\em J. Geom. Phys.}, 192 (2023), Paper No. 104921.

\bibitem{T}
Toyota, R.:
\newblock Quantum Gromov--Hausdorff convergence of spectral truncations for groups with polynomial growth.
\newblock arXiv:2309.13469.

\bibitem{vS}
van Suijlekom, W. D.:
\newblock Gromov--Hausdorff convergence of state spaces for spectral truncations.
\newblock {\em J. Geom. Phys.}, 162, 2021.

\end{thebibliography}
\end{document}